\documentclass[reqno,11pt]{amsart}
\usepackage{amsmath,amssymb,amsthm,bm,amsfonts,mathrsfs,graphicx}
\usepackage{breqn}
\usepackage{cite}
\usepackage[mathcal]{euscript}
\usepackage[breaklinks=true]{hyperref}
\usepackage{cleveref,enumitem}
\usepackage{macrostxd}
\usepackage[english]{babel}
\usepackage{setspace}
\usepackage{braket}
\usepackage{latexsym,bm}
\usepackage{amscd}
\usepackage{epstopdf}
\usepackage[all]{xypic}
\usepackage[titletoc]{appendix}
\usepackage{xfrac,xcolor}
\usepackage[normalem]{ulem}


%

\setlength{\marginparwidth}{1in} \setlength{\textheight}{22cm}
\setlength{\oddsidemargin}{0.09in}
\setlength{\evensidemargin}{0.10in}
\setlength{\textwidth}{15.6cm}
\setlength{\topmargin}{0in} \setlength{\headheight}{0.18in}
\setlength{\marginparwidth}{1.0in}

\newcommand\numberthis{\addtocounter{equation}{1}\tag{\theequation}}

    \makeatletter
    \def\step{
    \@ifnextchar[ \@step{\@noitemargtrue\@step[\@itemlabel]}}
    \def\@step[#1]{
        \item[#1]\textit{}\hspace*{\dimexpr-\labelwidth-\labelsep}
        }
    \makeatother
\setenumerate{fullwidth,itemindent=\parindent,
    listparindent=\parindent,itemsep=0ex,
    partopsep=0pt,parsep=0ex}

\begin{document}

 \onehalfspacing

\title[]
{Instability of the solitary waves for the 1d NLS with an attrictive delta potential in the degenerate case}

%
%
\author[]{Xingdong Tang}
 \address{\hskip-1.15em Xingdong Tang \hfill\newline School of Mathematics and Statistics, \hfill\newline Nanjing Univeristy of Information Science and Techenology, \hfill\newline Nanjing, 210044, China}
 \email{txd@nuist.edu.cn}

\author[]{Guixiang Xu}
 \address{\hskip-1.15em Guixiang Xu \hfill\newline School of Mathematical Sciences,
 	\hfill\newline Beijing Normal University,
 	\hfill\newline Laboratory of Mathematics and Complex Systems,
 	\hfill\newline Ministry of Education,
 	\hfill\newline Beijing, 100875, People's Republic of China.}
 \email{guixiang@bnu.edu.cn}

\subjclass[2000]{Primary: 35L70, Secondary: 35Q55}

\keywords{ Nonlinear Schr\"{o}dinger equation; Attractive delta potential; Orbital instability; Modulation analysis; Solitary waves; Virial identity.}

\begin{abstract}In this paper, we show the orbital instability of the solitary waves $Q_{\Omega}e^{i\Omega t}$ of the 1d NLS with an attractive delta potential ($\gamma>0$)
	\begin{equation*}
	 \i u_t+u_{xx}+\gamma\delta u+\abs{u}^{p-1}u=0, \; p>5, 
	\end{equation*} 
where $\Omega=\Omega(p,\gamma)>\frac{\gamma^2}{4}$ is the critical oscillation number and determined by
	\begin{equation*}
\frac{p-5}{p-1}
\int_{ \arctanh\sts{ \frac{\gamma}{2\sqrt{\Omega}} } }^{+\infty}
\sech^{\frac{4}{p-1}}\sts{y}\d y
=
{ \frac{\gamma}{ 2\sqrt{\Omega} } }\sts{ 1-\frac{\gamma^2}{4\Omega} }^{-\frac{p-3}{p-1}} \Longleftrightarrow  \mathbf{d}''(\Omega) =0.
	\end{equation*}
The classical convex method and Grillakis-Shatah-Strauss's stability approach in \cite{A2009Stab, GSS1987JFA1} don't work in this degenerate case, 
 and the argument here is motivated by those in \cite{CP2003CPAM, MM2001GAFA, M2012JFA, MTX2018, O2011JFA}. The main ingredients are to construct the unstable second order approximation near the solitary wave $Q_{\Omega}e^{i\Omega t}$ on the level set $\Mcal(Q_{\Omega})$  accoding to the degenerate structure of the Hamiltonian and to construct the refined Virial identity to show the orbital instability of the solitary waves $Q_{\Omega}e^{i\Omega t}$ in the energy space. Our result is the complement of the results in \cite{FOO2008AIHP} in the degenerate case.

\end{abstract}

\maketitle


\section{Introduction}
In this paper, we consider the  1d nonlinear Schr{\"o}dinger with a delta potential
\begin{equation}\label{dnls}
  \begin{cases}
    \i u_t+u_{xx}+\gamma\delta u+\mu\abs{u}^{p-1}u=0, & (t,x)\in\R_{+}\times\R, \\
    u(0,x)=u_0(x)\in H^1(\R), &
  \end{cases}
\end{equation}
where $u$ is a complex-valued function of $(t,x)$,
$\gamma\in \R\backslash\{0\} $, $\delta$ is the Dirac delta distribution at the origin, $\mu=\pm 1$ and $1< p< \infty$.
For $\gamma \not = 0$,  \eqref{dnls} appeares in various physical models with a point defect on the line, for instance, nonlinear optics \cite{GHW2004PhysD} and references therein.  For the case $\gamma<0$, it corresponds the repulsive delta potential,  while for the case  $\gamma>0$ it is attractive.

There are many results about \eqref{dnls}. Local well-posedness for \eqref{dnls} in the energy space $H^1\sts{\R}$ is well understood by Cazenave in \cite{C2003NLS},  Fukuizumi, Ohta and Ozawa in \cite{FOO2008AIHP} and Masaki, Murphy and Segata in \cite{MMS2018}. More precisely, we have
\begin{prop}[Local well-posedness in $H^1\sts{\R}$]
  For any $u_0\in H^1\sts{\R}$, there exists $T_{\max}$ with $0<T_{\max}\leq +\infty$ and a unqie solution 
  $u\in\Ccal\sts{ \left[0, T_{\max}\right), H^1\sts{\R} }$ for \eqref{dnls} satisfying  
  \begin{center}
    either $T_{\max}=+\infty$,
    or  $T_{\max}<+\infty$ and $ \displaystyle\lim\limits_{t\nearrow T_{\max}}\norm{\partial_x u\sts{t} }_{2}=+\infty$.
  \end{center}
  Moreover, the mass and the energy are conserved 
  under the flow generated by \eqref{dnls}, i.e., for any $t\in \left[0, T_{\max}\right)$, we have
  \begin{gather}
    \label{mass}
    \Mcal\sts{u\sts{t}}:=\frac{1}{2}\int_{\R}\abs{u(t,x)}^2\textup{d} x = \Mcal(u_0),
    \\
    \label{energy}
      \Ecal\sts{u\sts{t}}
       : =
      \int_{\R}\left[ \frac{1}{2}\abs{ u_{x}\sts{t,x} }^2-\frac{\gamma}{2} \delta(x)\abs{u\sts{t,x}}^2-\frac{\mu}{p+1} \abs{u(t,x)}^{p+1}   \right] \textup{d} x= \Ecal\sts{u_0}. 
  \end{gather}
\end{prop}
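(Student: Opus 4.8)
The plan is to realize $\partial_{xx}+\gamma\delta$ as minus a self-adjoint operator on $L^2(\R)$, to feed the associated unitary group into a contraction mapping argument carried out directly in $C([0,T],H^1(\R))$, and to recover the conservation laws \eqref{mass}--\eqref{energy} at the end by a regularization argument. In one dimension every power nonlinearity is energy-subcritical, so no Strichartz estimate is needed and the scheme works for all $1<p<\infty$. For the linear part, introduce the quadratic form $q(u)=\int_{\R}|\partial_x u|^2\,\mathrm{d}x-\gamma|u(0)|^2$ with form domain $H^1(\R)$; the one-dimensional trace inequality $|u(0)|^2\leq\varepsilon\|\partial_x u\|_{L^2}^2+C_\varepsilon\|u\|_{L^2}^2$, valid for every $\varepsilon>0$, shows that $q$ is closed, symmetric and bounded below, so by the KLMN theorem it defines a self-adjoint operator $H_\gamma$ with
\[
D(H_\gamma)=\bigl\{u\in H^1(\R)\cap H^2(\R\setminus\{0\}):\ \partial_x u(0^+)-\partial_x u(0^-)=-\gamma\,u(0)\bigr\},\qquad H_\gamma u=-\partial_{xx}u\ \ \text{on}\ \R\setminus\{0\}.
\]
By Stone's theorem $\mathcal U(t):=e^{\i t(\partial_{xx}+\gamma\delta)}=e^{-\i tH_\gamma}$ is a strongly continuous unitary group on $L^2(\R)$; since it commutes with $H_\gamma$ and, for $c$ large, the graph norm $\|(H_\gamma+c)^{1/2}\,\cdot\,\|_{L^2}$ is equivalent to the $H^1(\R)$ norm, $\mathcal U(t)$ is an isometry for that norm and hence $\sup_{t\in\R}\|\mathcal U(t)\|_{\mathcal L(H^1(\R))}<\infty$.

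Next I would close a fixed point. Because $H^1(\R)$ is a Banach algebra continuously embedded in $L^\infty(\R)$, the nonlinearity $N(u):=\mu|u|^{p-1}u$ sends $H^1(\R)$ into itself and is Lipschitz on bounded sets: $\|N(u)-N(v)\|_{H^1}\leq C(p)R^{p-1}\|u-v\|_{H^1}$ whenever $\|u\|_{H^1},\|v\|_{H^1}\leq R$ (bound $\partial_x N(u)$ pointwise by a multiple of $|u|^{p-1}|\partial_x u|$ and take $|u|^{p-1}$ in $L^\infty$). Given $u_0\in H^1(\R)$, set $R=2\|u_0\|_{H^1}$ and, on $X_T=\{u\in C([0,T],H^1(\R)):\|u\|_{L^\infty([0,T];H^1)}\leq R\}$ with the $C([0,T],H^1)$ metric, consider
\[
\Phi(u)(t)=\mathcal U(t)u_0+\i\int_0^t\mathcal U(t-s)\,N(u(s))\,\mathrm{d}s .
\]
The linear and nonlinear bounds give $\|\Phi(u)\|_{L^\infty([0,T];H^1)}\leq C_0\|u_0\|_{H^1}+C_0\,T\,R^{p}$ and $\|\Phi(u)-\Phi(v)\|_{L^\infty([0,T];H^1)}\leq C_0\,T\,R^{p-1}\|u-v\|_{L^\infty([0,T];H^1)}$, so for $T=T(\|u_0\|_{H^1})>0$ small enough $\Phi$ maps $X_T$ into itself and is a contraction; its fixed point is the unique solution on $[0,T]$. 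Gluing such solutions produces the maximal existence time $T_{\max}\in(0,+\infty]$ with uniqueness on $[0,T_{\max})$, and since the local existence time depends only on $\|u_0\|_{H^1}$, $T_{\max}<+\infty$ forces $\limsup_{t\nearrow T_{\max}}\|u(t)\|_{H^1}=+\infty$.

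It remains to prove the conservation laws and to upgrade this criterion. For $u_0\in D(H_\gamma)$ the fixed-point solution is a strong solution, $u\in C([0,T],D(H_\gamma))\cap C^1([0,T],L^2)$, so $t\mapsto\Mcal(u(t))$ and $t\mapsto\Ecal(u(t))$ are differentiable; differentiating in time and integrating by parts separately on $\R_{-}$ and $\R_{+}$, the boundary contributions at $0^\pm$ combine to cancel exactly by the jump relation in $D(H_\gamma)$, and one gets $\frac{\mathrm{d}}{\mathrm{d}t}\Mcal(u(t))=\frac{\mathrm{d}}{\mathrm{d}t}\Ecal(u(t))=0$. Since $D(H_\gamma)$ is dense in $H^1(\R)$, the functionals $\Mcal$ and $\Ecal$ are continuous on $H^1(\R)$ (the trace $u\mapsto u(0)$ being continuous there), and the flow depends continuously on the datum in $C([0,T],H^1)$, \eqref{mass} and \eqref{energy} extend to all $u_0\in H^1(\R)$. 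In particular $\|u(t)\|_{L^2}^2=2\Mcal(u_0)$ is constant, whence $\|u(t)\|_{H^1}\to+\infty$ if and only if $\|\partial_x u(t)\|_{L^2}\to+\infty$, which is the stated dichotomy.

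The main obstacle is the singular potential at the origin: one has to verify that the propagator $\mathcal U(t)$ is bounded on the form domain $H^1(\R)$, not merely unitary on $L^2(\R)$, and that the integration by parts in the conservation-law computation is compatible with the jump condition defining $D(H_\gamma)$. Everything downstream is routine, because $p<\infty$ together with $H^1(\R)\hookrightarrow L^\infty(\R)$ makes the nonlinearity a locally Lipschitz map of the energy space into itself.
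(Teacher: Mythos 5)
The paper does not actually prove this proposition: it records the statement and refers to \cite{C2003NLS}, \cite{FOO2008AIHP} and \cite{MMS2018}, so there is no in-paper argument to compare against. Your outline is exactly the standard route those references follow --- realize $-\partial_{xx}-\gamma\delta$ as the self-adjoint operator $H_\gamma$ through its quadratic form, check that $e^{-\i tH_\gamma}$ is bounded on the form domain $H^1(\R)$ because the form norm $\sts{q(u)+c\|u\|_{L^2}^2}^{1/2}$ is equivalent to $\|u\|_{H^1}$, close a fixed point for the Duhamel map, and recover the conservation laws by regularization. For the range the paper actually uses ($p>5$, in particular $p\ge 2$) the skeleton is sound.

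Two steps are weaker than you present them. First, the bound $\|N(u)-N(v)\|_{H^1}\le C(p)R^{p-1}\|u-v\|_{H^1}$ is false for $1<p<2$: the real derivative of $z\mapsto|z|^{p-1}z$ is only H\"older continuous of exponent $p-1$, so the difference of the gradient terms cannot be estimated linearly in $u-v$. Since \eqref{dnls} is posed for all $1<p<\infty$, you need Kato's device of contracting in the metric of $C([0,T],L^2)$ on a ball of $L^\infty([0,T],H^1)$ (closed for that metric by weak lower semicontinuity), for which the pointwise bound $|N(u)-N(v)|\lesssim(|u|^{p-1}+|v|^{p-1})|u-v|$ suffices. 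Second, the assertion that for $u_0\in D(H_\gamma)$ the fixed point lies in $C([0,T],D(H_\gamma))$ is not a harmless bootstrap, because $N$ does \emph{not} map $D(H_\gamma)$ into itself: if $u\in D(H_\gamma)$ then, using \eqref{f1d}, $\partial_x N(u)(0^+)-\partial_x N(u)(0^-)=f'(u(0))\sts{-\gamma u(0)}=-\gamma\, p\, N(u)(0)$, which violates the jump condition defining $D(H_\gamma)$ unless $u(0)=0$ or $p=1$. Persistence of $D(H_\gamma)$-regularity is still true, but it has to come from the abstract regularity theorem for $C^1$ nonlinearities (differentiate the Duhamel formula in $t$, obtain $u\in C^1([0,T],L^2)$, and read $u(t)\in D(H_\gamma)$ off the equation), not from invariance of the domain under $N$. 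Alternatively, mass conservation already holds for all $H^1$ data by pairing the equation with $u$ in the $H^{-1}$--$H^1$ duality, each of $\braket{u_{xx},u}$, $|u(0)|^2$ and $\int|u|^{p+1}$ being real; only the energy identity genuinely requires the approximation argument. With these two repairs the proof is complete; the upgrade from $\limsup$ to $\lim$ in the blow-up alternative follows, as you indicate, from the fact that the local existence time depends only on $\|u_0\|_{H^1}$ together with the conservation of $\|u(t)\|_{L^2}$.
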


By the Gagliardo-Nirenberg inequality and the conservation laws, we have the global wellposedness of \eqref{dnls}  in the energy space $H^1(\R)$ for $1<p<5$. 

In addition, for the repulsive potential case $\gamma<0$, equation \eqref{dnls} is also studied from the point of view of scattering. Banica and Visciglia proved the global well-posedness and scattering result of  the energy solution of \eqref{dnls} for the defocusing mass-supercritical nonlinearity $\mu<0$, $p>5$  in \cite{BV2016JDE}.  Ikeda and Inui obtained the scattering result of the energy solution of \eqref{dnls} below the ground state threshold for the focusing mass-supercritical nonlinearity $\mu>0$, $p>5$ in \cite{II2017APDE}.  Recently, Masaki, Murphy and Segata  showed the decay and modified scatering result of the solution of \eqref{dnls} with small initial data for $p=3$ in a weighted space in \cite{MMS2017}. One can also refer the instability of the solitary waves of \eqref{dnls} for $p>1$ to  \cite{FJ2008DCDS, CFFKS2008PhysD}.

Such results are not expected for the attractive case $\gamma>0$ because of the existence  of the eigenvalue $-\frac{1}{4} \gamma^2$ of the Schr\"{o}dinger operator $-\partial^2_{x}-\gamma \delta$ (see \cite{DP2011IMRN, HMZ2007CMP, MMS2018} and the references therein).  In this paper,  we will focus on the attractive delta potential ($\gamma>0$) and the focusing  nonlinearity ($\mu=1$) and  consider the stability/instability of the nonlinear solitary wave solutions for \eqref{dnls} with the following form 
\begin{equation*}
  u(t,x)=\e^{\i\omega t}Q_{\omega}(x).
\end{equation*} 
It is easy to verify that $ Q_{\omega} $ satisfies
\begin{equation}
  \label{eqQ}
    -\partial_{x}^{2}{Q}_{\omega}(x)+\omega {Q}_{\omega}(x) -\gamma\delta(x){Q}_{\omega}(x)
    -\abs{Q_{\omega}(x)}^{p-1}{Q}_{\omega}(x)=0.
\end{equation}
For the case $\omega>\frac{\gamma^2}{4}$, there exists a unqie positive, radial symmetric solution to \eqref{eqQ} which can be explicitly described as following (see  \cite{FJ2008DCDS, FOO2008AIHP, GHW2004PhysD, CFFKS2008PhysD, MMS2018} )
\begin{equation}
  \label{Q}
    {Q}_{\omega}(x)=
    \left[ \frac{(p+1)\omega}{2}
      \sech^2( \frac{(p-1)\sqrt{\omega}}{2}\abs{x} +\arctanh( \frac{\gamma}{2\sqrt{\omega}}) ) \right]^{\frac{1}{p-1}}.
\end{equation}
The stability of ${Q}_{\omega}$  is a crucial problem during the study of the dynamics of the flow induced by  \eqref{dnls}. We firstly recall the definition of the orbital stability/instability in order to show the orbital stability/instability of the solitary waves in the energy space.
\begin{defi}
\label{def:stab}
  The solitary wave $\e^{\i\omega t} Q_{\omega}\sts{x}$ of \eqref{dnls} is said to be orbitally stable in $H^1\sts{\R}$ 
  if   for any $\alpha>0$, there exists $\beta=\beta(\alpha)>0$ such that for any solution
  $u\sts{t}$ to \eqref{dnls} with initial data $u_0\in\Ucal\sts{Q_{\omega}~,~\beta}$, we have
  \begin{equation*}
    u\sts{t}\in \Ucal\sts{Q_{\omega}~,~\alpha},\quad\text{for any }\; t\geq 0,
  \end{equation*}
  where
  \begin{equation}\label{tube}
    \Ucal\sts{Q_{\omega}~,~\alpha}
    =
    \Set{u\in H^1\sts{\R} | \inf_{\theta\in\R}\norm{u\sts{\cdot}- Q_{\omega}\sts{\cdot}\textup{e}^{\textup{i}\theta} }_{H^1}<\alpha }.
  \end{equation}
  Otherwise, the solitary wave $\e^{\i\omega t} Q_{\omega}\sts{x}$ is said to be orbitally unstable  in $H^1\sts{\R}$.
\end{defi}

For \eqref{dnls} with the cubic nonlinearity, Goodman, Holmes and Weinstein showed the orbital stability of the solitary waves $ \e^{\i\omega t}Q_{\omega}(x) $ with  $4\omega>\gamma^2 $ in the energy space $H^1\sts{\R}$ in \cite{GHW2004PhysD}. Later, by the Vakhitov-Kolokolov stability criteria in \cite{VK1973RQE} (see also  \cite{A2009Stab, GSS1987JFA1,SS1985CMP}), Fukuizumi, Ohta and Ozawa generalized the result  
to the case $p>1$ in \cite{FOO2008AIHP} (see also \cite{CFFKS2008PhysD}). More precisely, the following results hold:
\begin{enumerate}
  \item for any $p \in (1,5]$, the solitary waves $ \e^{\i\omega t}Q_{\omega}(x) $ with $\omega>\frac{\gamma^2}{4}$ are orbitally stable in $H^1\sts{\R}$;
  \item for any $p>5$, there exists $\Omega=\Omega(p,\gamma)>\frac{\gamma^2}{4}$, such that
      \begin{itemize}
        \item the solitary waves $ \e^{\i\omega t}Q_{\omega}(x) $ with $\omega\in(\frac{\gamma^2}{4},\Omega)$ are orbitally stable in $H^1\sts{\R}$ ;
        \item the solitary waves $ \e^{\i\omega t}Q_{\omega}(x) $ with $\omega>\Omega$ are orbitally unstable in $H^1\sts{\R}$,
      \end{itemize}
      where $\Omega(p,\gamma)$ is  determined by
      \begin{equation}
      \label{Omegaeq}
        \frac{p-5}{p-1}
        \int_{ \arctanh\sts{ \frac{\gamma}{2\sqrt{\Omega}} } }^{+\infty}
          \sech^{\frac{4}{p-1}}\sts{y}\d y
        =
        { \frac{\gamma}{ 2\sqrt{\Omega} } }\sts{ 1-\frac{\gamma^2}{4\Omega} }^{-\frac{p-3}{p-1}}
      \Longleftrightarrow  \mathbf{d}''(\Omega)=0.
      \end{equation}
\end{enumerate}

Above all,  only the critical oscillation case $\omega=\Omega(p,\gamma)$ for $p>5$ is left open, for which the Vakhitov-Kolokolov stability criteria breaks down because of the fact that $ \mathbf{d}''(\Omega)=0$, i.e., the degeneracy of the second order derivativce of the function $ \mathbf{d}(\omega)=S_{\omega}(Q_\omega)$ at $\omega=\Omega(p, \gamma)$. Fukuizumi, Ohta and Ozawa conjectured that  the solitary wave $ \e^{\i\omega t}Q_{\omega}(x) $ with $\omega=\Omega(p, \gamma)$  is orbitally unstable in \cite{FOO2008AIHP}.
The purpose of this paper is to prove this conjecture according to the observations in  \cite{CP2003CPAM, MM2001GAFA, M2012JFA, MTX2018, O2011JFA}. More precisely, we have the main result as following.
\begin{theo}
\label{mainthm}
  Let $\gamma>0$, $\mu=1$, $p>5$ and $\Omega>\frac{\gamma^2}{4}$ satisfy \eqref{Omegaeq}. The solitary waves $ \e^{\i\Omega t}Q_{\Omega}(x) $ of \eqref{dnls}
  is orbitally unstable in the energy space $H^1(\R)$.  More precisely, 
   there exist $\alpha_0>0$ and $\lambda_0>0$ such that if
  $$u_{0}\sts{x}= Q_{\Omega}\sts{x}+ \lambda~\phi_{\Omega} \sts{x} +\widetilde{\rho}\sts{\lambda}~Q_{\Omega}\sts{x},$$
 where $0<\lambda<\lambda_0$, $\phi_{\Omega} = \frac{\partial Q_{\omega}}{\partial \omega }|_{\omega=\Omega}$  and $\widetilde{\rho}\sts{\lambda}$ is chosen by the implicit function theorem such that
 $$\Mcal\sts{u_{0}}=\Mcal\sts{Q_{\Omega}}, $$
  then there exists $t_0=t_0(u_{0})$ such that the solution
  $u\sts{t}$ of \eqref{dnls} with initial data $u_{0}$ satisfies
  \begin{equation*}
  \inf_{\theta\in \R}\big\|u(t_0,\cdot)
  -
  Q_{\Omega}\sts{\cdot}\e^{\i\theta}\big\|_{H^1(\R)} \geq \alpha_0.
  \end{equation*}
\end{theo}

As stated above, the classical modulation analysis and the Virial type identity doesn't work once again in \cite{GSS1987JFA1,  GSS1990JFA, S1985NLKG, S1985NLS, W1985SJMA, W1986CPAM} because of the degenerate property of $ \mathbf{d}''\sts{\Omega}$, we now give more details about the refined modulation decomposition and the refined Virial identity.

Firstly, we use the following decomposition
\begin{equation}\label{decomp_v2}
u\sts{x} = \e^{-\i \theta }
\bsts{
	Q_{\Omega}
	+
	{\lambda}\phi_{\Omega}
	+
	\rho({ \lambda})Q_{\Omega}  + \eps
}\sts{ x },  \;\; \rho\sts{\lambda} = -\frac{\|\phi_{\Omega}\|^2_2}{2\|Q_{\Omega}\|^2_2}\cdot \lambda^2
\end{equation}
for the function $u$ in the $\eta_0$-tube $\Ucal\sts{Q_{\Omega}~,~\eta_0} $ of $Q_{\Omega}$
(see \eqref{tube} for the definition of the $\eta_0$-tube of $Q_{\Omega}$ ), the above refined decomposition is
related with the landscape of the action functional $\Scal_{\omega}$ near $Q_{\Omega}$.
\begin{enumerate}
	\item By the variational characterization of $Q_{\Omega}$, the action functional
	$\Scal_{\omega} $
	has the following properties
	\begin{align*}
	\Scal'_{\Omega}\sts{Q_{\Omega}} =0,{\;\; \Scal''_{\Omega}\sts{Q_{\Omega}} = \mathcal{L},}
	\end{align*}
	where the null space of the linearized operator
	${\mathcal L}$
	is characterized by
	$\text{Null}\sts{\mathcal L}=\text{span}\{\i Q_{\Omega}\}$.
	
	By the finite degenerate property of the function $ \mathbf{d}\sts{\Omega}=\Scal_{\Omega}\sts{Q_{\Omega}}$, we know that 
	\begin{align*}
     \mathbf{d}''(\Omega)=0, 
     \;\;\text{and}\;\; 
      \mathbf{d}^{\tprime}(\Omega)
	\neq
	0,
	\end{align*}
	where the first equality means that the mass conservation quantity
	$\Mcal\sts{u}
	=\Mcal\sts{u}$
	has the local equilibrium point $Q_{\Omega}$  along the curve $\{Q_{\Omega + \lambda }\}_{\lambda\in \R}$.
	\item Up to the phase rotation invariances, the first order approximation of $u$ to $Q_{\Omega}$ comes from the tangent vector $\varphi_{\Omega}$ of the curve $\{Q_{\Omega+\lambda }\}_{\lambda\in \R}$  at $Q_{\Omega}$,  and we have the following degenerate result
	\begin{align}\label{dir_xi}
	\Scal^{\dprime}_{\Omega}\sts{Q_{\Omega}}
	\sts{\phi_{\Omega}, ~\phi_{\Omega}} = -\left<Q_{\Omega}, \phi_{\Omega}\right>= 0.
	\end{align}
	
	\item Up to the  phase rotation invariances, the second order approximation of $u$ to $Q_{\Omega}$ is the direction $Q_{\Omega}  $, which
	is the steepest descent direction of the quantity $\Mcal\sts{u}$
	at $Q_{\Omega}$ along the curve $\{Q_{\Omega+\lambda }\}_{\lambda\in \R}$.  At the same time, we have the algebraic relations
	\begin{align*}\Scal''_{\Omega}\sts{Q_{\Omega}} \varphi_{\Omega} =  -Q_{\Omega}, \text{~~and ~~}
	\Scal'''_{\Omega}\sts{Q_{\Omega}}\sts{ \varphi_{\Omega} , \varphi_{\Omega} , \varphi_{\Omega} } + 3 \sts{\varphi_{\Omega} , \varphi_{\Omega} }= \mathbf{d}^{\tprime}(\Omega),
	\end{align*}

Now we take the following approximation \begin{align}\label{decomp_v1}
	Q_{\Omega}
	+
	{\lambda}\phi_{\Omega}
	+
	\rho({ \lambda})Q_{\Omega},
	\end{align}
	up to the  phase rotation invariances, where $\rho\sts{\lambda}$ can be ensured by restriction of the solution on the level set $\Mcal\sts{Q_{\Omega}}$ and indeed can be determined by the implicit function theorem (see Lemma \ref{lem:ift}).
	
	By the above approximation,  we can characterize the landscape of the function $\Scal_{\Omega}$ at $Q_{\Omega}$ along the perturbation $	{\lambda}\phi_{\Omega}
	+
	\rho({ \lambda})Q_{\Omega} $,
	\begin{equation*}
	\Scal_{\Omega}\sts{Q_{\Omega} + {\lambda}\phi_{\Omega}
		+
		\rho({ {\lambda}})Q_{\Omega}
	}
	=
	\Scal_{\Omega}\sts{Q_{\Omega}} +\frac{1}{6}\mathbf{d}^{\tprime}(\Omega)\cdot \lambda^3
	+\so{\abs{\lambda}^3},
	\end{equation*}
	\begin{equation*}
	\Scal_{\Omega}\sts{Q_{\Omega} + {\lambda}\phi_{\Omega}
		+
		\rho({ {\lambda}})Q_{\Omega}+\eps
	}
	=
	\Scal_{\Omega}\sts{Q_{\Omega}} +\frac{1}{6}\mathbf{d}^{\tprime}(\Omega)\cdot \lambda^3
	+\Scal^{\dprime}_{\Omega}\sts{Q_{\Omega}}\sts{\eps,~\eps}
	+\so{\abs{\lambda}^3+\norm{\eps}_{H^1}^2},
	\end{equation*}
	which means that if the small remainder term $\eps$ can be ignored,
	$\Scal_{\Omega}$ is a local monotone function with respect to $\lambda$
	under the special perturbation ${\lambda}\phi_{\Omega}
	+
	\rho({ {\lambda}})Q_{\Omega}$ near $Q_{\Omega}$ ,
	that is to say, the perturbation in the direction $\phi_{\Omega}$
	can play the dominant role under this special perturbation.
	This definite property of $\Scal_{\Omega}$ helps us to show the orbital instability of
	the solitary waves of \eqref{dnls} with the Virial argument in the degenerate case.
	
	\item The remainder $\varepsilon$ in \eqref{decomp_v2} is not only small,
	but also has some orthogonal structures, which makes the linearized operator
	$\Lcal=\Scal''_{\Omega}(Q_{\Omega})$ to possess almost coercivity to ensure the control of the remaider term $\eps$, see Lemma \ref{lem:coer}.
\end{enumerate}

Secondly, in order to show the orbital instability of the solitary waves
$Q_{\Omega}\sts{x}\e^{\i\Omega t}$ of \eqref{dnls},
we now turn to the effective monotonicity formula.
Since the quadratic term in $\lambda$ of
\begin{align*}
\action{Q_{\Omega} + {\lambda}~\phi_{\Omega}
		+
		\rho({ {\lambda}})~Q_{\Omega}
		+
		\eps
}{\varphi_{\Omega}},
\end{align*}
which corresponds to the term in \eqref{thett}, has the indefinite sign. By introducing the perturbation of $\varphi_{\omega, c}$ in the subspace $\text{Null}\sts{\mathcal{L}}$ to obtain the cancelation effect in the quadratic term  in $\lambda$ of \eqref{thett}, we can construct the refined Virial type quantity in the remainder term $\eps(t)$
\begin{equation}
\Iscr\sts{t} = \action{\i\eps\sts{t,x}}{ \varphi_{\Omega}\sts{x}
	-\lambda\sts{t}
	\frac{\braket{\varphi_{\Omega},\varphi_{\Omega}}
	}{
		\braket{ Q_{\Omega},Q_{\Omega}}
	}
	Q_{\Omega}\sts{x}} ,
\end{equation}
which has the monotone property in some sense (see \eqref{eIt}), to show the orbital instability of the solitary wave $Q_{\Omega}\sts{x}\e^{\i\Omega t}$ of \eqref{dnls}.

At last, the paper is organized as following. 
 In Section \ref{sect:pre},  we recall some properties of the linear Schr\"{o
}dinger operaotr with the dirac potential, the landscape of the action functional $\Scal_{\omega}$ at $Q_{\Omega}$ along the unstable dirction $\phi_{\Omega}$, and the refined modulation decomposition of the functions in the $\eta$-tube of $Q_{\Omega}$,  and the coercivity property of the linearized operator $\Lcal=\Scal_{\Omega}''\sts{Q_{\Omega}}$ on the subspace with the finite co-dimensions; In Section \ref{sect:eps-dyn}, we deduce the equation obeyed by the remainder term
$\eps\sts{t,x}$, and show the dynamical estimates of the parameters
$\lambda\sts{t}$ and $\theta\sts{t}$ by the geometric structures of
the remainder term. In Section \ref{sect:main},
we first construct the solutions of \eqref{dnls} near the solitary wave with
the refined geometric structures,
then show the orbital instability of the solitary wave of \eqref{dnls}
in the degenerate case by the dynamical behaviors of the remainder term and the parameters, and the refined Virial identity.

In Appendix \ref{app:d3rdd}, we calculate the third order derivative  $\mathbf{d}'''(\Omega) $ of $\mathbf{d}(\omega)=\Scal_{\omega}(Q_{\omega})$ at $\Omega$.

\subsection*{Acknowledgements.} 
The authors would like to thank Professor Thierry Cazenave and Professor Masahito Ohta for their valuable comments and suggestions, and the authors have been partially supported by the NSF grant of China (No. 11671046, and  No. 11831004). 

\section{Preliminaries}\label{sect:pre}
We make some preparations  in this section.
From now on, we fix $p>5$ and $\Omega=\Omega(p,\gamma)>\frac{\gamma^2}{4}$ is  determined by \eqref{Omegaeq}.
The Hilbert spaces $L^2\sts{\R,\C}$ and $H^1\sts{\R, \C}$ will be denoted by $L^2\sts{\R}$ and $H^1\sts{\R}$ respectively.
We denote 
\begin{equation*}
  \braket{u,v}=\Re\int u\sts{x}\bar{v}\sts{x}\d x,\quad\text{ for all } u,~v\in L^2\sts{\R}
\end{equation*}
be the inner product on the space $L^2\sts{\R}$.
For the simplification, we denote the following functions:
\begin{align*}
  f (z) =\abs{z}^{p-1}z, &\quad F(z)= \frac{1}{p+1}\abs{z}^{p+1}.
\end{align*}
A direct computation implies that for any $z_0,~z_1,~z_2,~z_3\in\C$, the following estimates hold: 
	\begin{align}
	\label{fexp1}
\abs{ f\sts{z_0+z_1} - f^{\prime}\sts{z_0}z_1 }\leq C\sts{\abs{z_0}}\sts{ \abs{z_1}^2 + \abs{z_1}^{p} },
\\
\label{fexp2}
\abs{ f\sts{z_0+z_1} - f^{\prime}\sts{z_0}z_1 -\frac{1}{2} f^{\dprime}\sts{z_0} {z_1z_1} }\leq C\sts{\abs{z_0}}\sts{ \abs{z_1}^3 + \abs{z_1}^{p} }, 
\end{align}
where $C\sts{\abs{z_0}}$ is a constant which only depends on $\abs{z_0}$, and
\begin{align}
\label{f1d}
f^{\prime}\sts{z_0}z_1
= & \;
\abs{z_0}^{p-1}z_1+\sts{p-1}\abs{z_0}^{p-3}\Re\sts{z_0\overline{z_1}}z_{0},
\\
\notag
f^{\dprime}\sts{z_0} {z_1z_2}
=
&\;
\sts{p-1}\abs{z_0}^{p-3}
\left[{ \Re\sts{ z_0 \overline{z_2} }z_1 + \Re\sts{ z_0 \overline{z_1} }z_2 +\Re\sts{ z_2 \overline{z_1} }z_0 }\right]
\\
\label{f2d}
&\;
+
\sts{p-1}\sts{p-3}\abs{z_0}^{p-5}\Re\sts{ z_0\overline{z_1} }\Re\sts{ z_0\overline{z_2} }z_0.
\end{align}
and 
\begin{align}
\label{F1std}
  F^{\prime}\sts{z_0}\sts{z_1} = &\;  \Re\left[{ f\sts{z_0}\bar{z}_1 }\right],
\\
   \label{F2ndd}
    F^{\dprime}\sts{z_0}\sts{z_1,z_2}
  =
  &\;
  \abs{z_0}^{p-3}
  \Big[ 
    p\Re \sts{z_0\bar{z}_1} \Re \sts{z_0\bar{z}_2} 
    + 
    \Im \sts{z_0\bar{z}_1} \Im \sts{z_0\bar{z}_2} 
  \Big],
\\
  F^{\tprime}\sts{z_0}\sts{z_1,z_2,z_3}
  =
  &\;
  \frac{p^2-1}{4}\abs{z_0}^{p-3}
  \Re\left[ z_0\sts{ \bar{z}_1\bar{z}_2 {z}_3 + \bar{z}_1{z}_2 \bar{z}_3 + {z}_1\bar{z}_2 \bar{z}_3 } \right] 
  \notag
  \\
  \label{F3rdd}
  &\; 
  +
  \frac{ \sts{p-1}\sts{p-3} }{ 4 }\abs{z_0}^{p-5} \Re\sts{ z_0\bar{z}_1  z_0\bar{z}_2 z_0\bar{z}_3 }.
\end{align}

%

\subsection{Linear Schr{\"o}dinger operator with a delta potential}\label{SubS:LSO}
We now recall some well-known properties for the linear Schr\"{o}dinger operator $ -\frac{\partial^2}{\partial x^2}-\gamma\delta$
with $\gamma\in\left[-\infty,+\infty \right)$, which were used in the physics literature. In fact, the following self-adjoint operator:
\begin{align*}
  &-{\Delta}_{\gamma} =
  -\frac{{\d}^2}{\d x^2}
  \text{~~with~~}
  \\
  &\text{D}(-\Delta_{\gamma})=
  \Set{\psi\in H^1\cap H^2(\R\setminus\Set{0}) | \frac{\d \psi}{\dx}(0+)-\frac{\d \psi}{\dx}(0-)=-\gamma \psi(0) }.
\end{align*}
gives the precise formulation of $ -\frac{\partial^2}{\partial x^2}-\gamma\delta$, see for instance  \cite{AGHH1988QM}. Moreover, the essential spectrum of $-{\Delta}_{\gamma} $ coincides with
$[0,+\infty)$. In addition, if $\gamma>0$,  $-{\Delta}_{\gamma}$ has exactly one
negative, simple eigenvalue, i.e. $-\frac{\gamma^2}{4}$ with the positive normalized eigenfunction
$ \sqrt{\frac{\gamma}{2}}\e^{-\frac{\gamma}{2}\abs{x}} $. Therefore, for any $\psi\in H^1(\R)$ and any
$\gamma>0$, we have
\begin{equation}
\label{eig1}
  -\frac{\gamma^2}{4}\int_{\R}\abs{\psi(x)}^2\dx \leq  \int_{\R}\abs{\psi'(x)}^2\dx-\gamma\int_{\R}\delta(x)\abs{\psi(x)}^2.
\end{equation}
As a consequence of the above inequality, we have
\begin{lemm}
  For any $\psi\in H^1(\R)$, the following inequality holds,
\begin{equation}
\label{sharp-embed}
  \abs{\psi(0)}^2 \leq  \norm{\psi'}_{2} \norm{\psi}_{2}.
\end{equation}
\begin{proof}
For any $\psi\in H^1(\R)$, since \eqref{eig1} holds for all $\gamma>0$, one can rewrite
\eqref{eig1} as
\begin{equation*}
  \int\delta(x)\abs{\psi(x)}^2 \leq   \frac{1}{\gamma}\int\abs{\psi'(x)}^2\dx+\frac{\gamma}{4}\int\abs{\psi(x)}^2\dx, \quad\text{~for all~} \gamma>0,
\end{equation*}
which implies \eqref{sharp-embed} by optimizing $\gamma$.
\end{proof}
\end{lemm}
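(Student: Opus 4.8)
The plan is to obtain the inequality as a direct consequence of the spectral bound \eqref{eig1}, which is available to me. Since $H^1(\R)$ embeds continuously into the space of continuous functions vanishing at infinity, the boundary term reads $\int_{\R}\delta(x)\abs{\psi(x)}^2=\abs{\psi(0)}^2$, so after moving the $\gamma\abs{\psi(0)}^2$ term to the left in \eqref{eig1} and dividing by $\gamma$, I would record the one-parameter family of bounds
\begin{equation*}
\abs{\psi(0)}^2 \leq \frac{1}{\gamma}\norm{\psi'}_{2}^2 + \frac{\gamma}{4}\norm{\psi}_{2}^2, \qquad \text{for all } \gamma>0.
\end{equation*}
The key observation is that the left-hand side does not depend on $\gamma$, so the strategy is simply to take the infimum of the right-hand side over $\gamma>0$.

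First I would dispose of the trivial case $\psi\equiv 0$, for which both sides vanish. When $\norm{\psi}_{2}>0$, writing $A=\norm{\psi'}_{2}^2$ and $B=\norm{\psi}_{2}^2>0$, I would minimize $g(\gamma)=A/\gamma+\gamma B/4$ over $\gamma>0$; the cleanest route is the AM--GM inequality
\begin{equation*}
\frac{A}{\gamma} + \frac{\gamma B}{4} \geq 2\sqrt{\frac{A}{\gamma}\cdot\frac{\gamma B}{4}} = \sqrt{AB} = \norm{\psi'}_{2}\norm{\psi}_{2},
\end{equation*}
with equality attained at $\gamma_{*}=2\sqrt{A/B}>0$. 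Passing to the infimum on the right then yields \eqref{sharp-embed}.

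There is no genuinely hard step here; the only points to check are that the optimizing value $\gamma_{*}$ is strictly positive and admissible, and that the degenerate cases ($\psi\equiv 0$, or $\norm{\psi'}_{2}=0$ which forces $\psi$ constant and hence $\psi\equiv 0$ on $\R$) are handled so that the optimization is legitimate. As a self-contained alternative that bypasses \eqref{eig1} entirely, I could instead use the fundamental theorem of calculus on each half-line: from $\abs{\psi(0)}^2=\int_{-\infty}^{0}(\abs{\psi}^2)'\,\dx$ and $\abs{\psi(0)}^2=-\int_{0}^{+\infty}(\abs{\psi}^2)'\,\dx$, together with the pointwise estimate $\bigl|(\abs{\psi}^2)'\bigr|\leq 2\abs{\psi}\abs{\psi'}$, adding the two identities and applying the Cauchy--Schwarz inequality gives $2\abs{\psi(0)}^2\leq 2\norm{\psi}_{2}\norm{\psi'}_{2}$, which is again \eqref{sharp-embed}.
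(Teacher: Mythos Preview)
Your proof is correct and follows essentially the same approach as the paper: rearrange \eqref{eig1} into the one-parameter family $\abs{\psi(0)}^2 \leq \gamma^{-1}\norm{\psi'}_2^2 + (\gamma/4)\norm{\psi}_2^2$ and then optimize in $\gamma$. Your treatment is in fact more explicit (carrying out the AM--GM step and disposing of the degenerate cases), and your alternative via the fundamental theorem of calculus is a nice self-contained addendum, but neither changes the underlying strategy.
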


\subsection{Basic properties of the action functional $\Scal_{\omega}$ and $d\sts{\omega}$ }\label{SubS:BP} For any $u\in H^1(\R)$, we define the action functional $\Scal_{\omega}$ as following:
\begin{equation}
\label{S}
\Scal_{\omega}(u)=\Ecal(u)+\omega\Mcal(u),
\end{equation}
where $\Mcal(u)$ and $\Ecal(u)$ are the  mass and energy  of $u$ defined by \eqref{mass}  and \eqref{energy} respectively. Since $p>5$, 
we have that
$\Scal_{\omega}$ is a $\Ccal^3$ functional on $H^1(\R)$ by \eqref{F1std}, \eqref{F2ndd} and \eqref{F3rdd}. In addition, we can obtain the following variational characterization of $Q_{\omega}$ by  the concentration-compactness argument in \cite{FOO2008AIHP,FJ2008DCDS,CFFKS2008PhysD}.
\begin{prop}
\label{vcsltn}
Let $\omega$ satisfy $4\omega>\gamma^2$. Then the function defined by \eqref{Q} 
is the unique positive, radial symmetric solution to \eqref{eqQ}. Moreover,
The set $\Set{ Q_{\omega}\e^{\i\theta} | \theta\in\R }$ coincides with all minimizers of the following minimization problem:
\begin{equation}
  \label{d}
  \inf\Set{
     \Scal_{\omega}\sts{ \psi } 
     | 
     \psi\in H^1\sts{\R}\setminus\Set{0},~~ 
     \Kcal_{\omega}\sts{ \psi } = 0  
  },
\end{equation}
where $\Scal_{\omega}$ is the action functional defined by \eqref{S}, and $\Kcal_{\omega}$ is the scaling derivative of $\Scal_{\omega}$ defined by 
\begin{align*}
  \mathcal{K}_{\omega}(\psi)
  =
  \left.\frac{\textup{d} }{\textup{d} \lambda}\Scal_{\omega}(\lambda\psi)\right|_{\lambda=1}
  =&
  \int \abs{\psi'(x)}^2+\omega\int \abs{\psi(x)}^2-{\gamma}\int\delta(x)\abs{\psi(x)}^2-\int\abs{\psi(x)}^{p+1}.
\end{align*}
\end{prop}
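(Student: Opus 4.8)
The plan is to prove the ODE uniqueness statement and the variational characterization together, the link being a Lagrange multiplier computation that forces any minimizer of \eqref{d} to be a genuine solution of \eqref{eqQ}. For uniqueness: on $\R\setminus\{0\}$ a positive solution $Q$ of \eqref{eqQ} solves the autonomous equation $-Q''+\omega Q-Q^{p}=0$, whose first integral $\tfrac12(Q')^{2}-\tfrac{\omega}{2}Q^{2}+\tfrac{1}{p+1}Q^{p+1}$ must vanish since $Q\in H^{1}$ forces $Q$ and $Q'$ to vanish at $\pm\infty$; hence on each half line $Q$ lies on the homoclinic loop of the origin, so by evenness
\[
Q(x)=\Bigl[\tfrac{(p+1)\omega}{2}\Bigr]^{\frac1{p-1}}\sech^{\frac{2}{p-1}}\!\Bigl(\tfrac{(p-1)\sqrt{\omega}}{2}\abs{x}+b\Bigr)
\]
for some $b\in\R$. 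Reading \eqref{eqQ} distributionally, the coefficient of $\delta$ imposes $Q'(0+)-Q'(0-)=-\gamma Q(0)$, i.e. $Q'(0+)=-\tfrac{\gamma}{2}Q(0)$ by evenness, which a direct differentiation turns into $\tanh b=\tfrac{\gamma}{2\sqrt{\omega}}$; since $4\omega>\gamma^{2}$ places $\tfrac{\gamma}{2\sqrt{\omega}}$ in $(0,1)$ there is a unique root $b=\arctanh(\tfrac{\gamma}{2\sqrt{\omega}})$, giving exactly \eqref{Q}.

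For the variational part I would first record that on the constraint $\Kcal_{\omega}(\psi)=0$ one has $\Scal_{\omega}(\psi)=\tfrac{p-1}{2(p+1)}\bigl(\norm{\psi'}_{2}^{2}+\omega\norm{\psi}_{2}^{2}-\gamma\abs{\psi(0)}^{2}\bigr)$; by \eqref{eig1} the bracket is equivalent to $\norm{\psi}_{H^{1}}^{2}$ (here $\omega>\tfrac{\gamma^2}{4}$), and combining the constraint with Gagliardo--Nirenberg and \eqref{sharp-embed} bounds it below by a positive constant, so the infimum $d(\omega)$ in \eqref{d} is strictly positive. For attainment I would pass to a minimizing sequence of nonnegative, even, radially decreasing functions---the diamagnetic inequality together with Schwarz symmetrization does not raise $\Scal_{\omega}$ and makes $\Kcal_{\omega}$ non-positive, after which a rescaling $\psi\mapsto\lambda\psi$ with $\lambda\in(0,1]$ restores the constraint and, when $\lambda<1$, strictly lowers $\Scal_{\omega}$. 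Such a sequence is bounded in $H^{1}$ and, being radially decreasing, obeys $\abs{\psi_{n}(x)}\lesssim\norm{\psi_{n}}_{2}\abs{x}^{-1/2}$, giving tightness and hence precompactness in $L^{p+1}(\R)$, with a nonzero limit $\psi_{\infty}$ because $\norm{\psi_{n}}_{p+1}$ is bounded below on the constraint. Weak $H^{1}$ lower semicontinuity then yields $\Scal_{\omega}(\psi_{\infty})\le d(\omega)$ and $\Kcal_{\omega}(\psi_{\infty})\le0$, and a strict inequality in the latter would, after one further downward rescaling, give an admissible competitor of action $<d(\omega)$; hence $\psi_{\infty}$ is a minimizer.

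To identify the minimizers, note that $\psi_{\infty}$ satisfies $\Scal_{\omega}'(\psi_{\infty})=\nu\,\Kcal_{\omega}'(\psi_{\infty})$, and pairing with $\psi_{\infty}$ while using $\langle\Scal_{\omega}'(\psi_{\infty}),\psi_{\infty}\rangle=\Kcal_{\omega}(\psi_{\infty})=0$ and $\langle\Kcal_{\omega}'(\psi_{\infty}),\psi_{\infty}\rangle=(1-p)\norm{\psi_{\infty}}_{p+1}^{p+1}\neq0$ forces $\nu=0$, so $\psi_{\infty}$ solves \eqref{eqQ}; being nonnegative, nontrivial and even it is positive on $\R\setminus\{0\}$ by the maximum principle, hence $\psi_{\infty}=Q_{\omega}$ by the first step, and in particular $d(\omega)=\Scal_{\omega}(Q_{\omega})$. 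Conversely $\Kcal_{\omega}(Q_{\omega})=\langle\Scal_{\omega}'(Q_{\omega}),Q_{\omega}\rangle=0$, so $Q_{\omega}$---and, by phase invariance of $\Scal_{\omega}$ and $\Kcal_{\omega}$, every $Q_{\omega}e^{i\theta}$---is a minimizer. Finally, any minimizer $\psi$ solves \eqref{eqQ} by the same multiplier argument, and $d(\omega)\le\Scal_{\omega}(\abs{\psi})\le\Scal_{\omega}(\psi)=d(\omega)$ forces $\norm{\abs{\psi}'}_{2}=\norm{\psi'}_{2}$, hence $\psi=e^{i\theta}\abs{\psi}$ with $\theta$ constant; the real nonnegative minimizer $v=\abs{\psi}$ solves \eqref{eqQ}, is positive, and---being a translate of the homoclinic on each half line---satisfies its matching conditions at $0$ only if it is even, so $v=Q_{\omega}$ once more, and the minimizer set is exactly $\{Q_{\omega}e^{i\theta}:\theta\in\R\}$.

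The main obstacle is the compactness of the minimizing sequence: a priori its mass could split, with part escaping to spatial infinity where the attractive defect is no longer felt. The reduction to radially decreasing profiles is what eliminates this dichotomy cleanly in one dimension---the uniform pointwise decay forbids any leakage of mass to infinity---so the concentration--compactness alternative never bites, and the remaining ingredients (the Nehari identity, the vanishing of the multiplier, the phase rigidity) are routine. If one prefers not to symmetrize, the alternative is to prove the strict binding inequality comparing $d(\omega)$ with the translation invariant problem obtained by dropping $\gamma\delta$, which again rules out dichotomy.
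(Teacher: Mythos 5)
Your argument is correct, and it takes a genuinely different route from the paper: the paper gives no proof of this proposition at all, simply invoking the concentration--compactness arguments of \cite{FOO2008AIHP,FJ2008DCDS,CFFKS2008PhysD}, which rule out vanishing and dichotomy of a minimizing sequence by strict subadditivity and by comparison with the translation-invariant problem without the delta term. You replace all of that machinery by the diamagnetic inequality plus symmetric decreasing rearrangement: for $\gamma>0$ the rearranged function attains its maximum at the origin, so rearrangement lowers the quadratic form $\|\psi'\|_{L^2}^2+\omega\|\psi\|_{L^2}^2-\gamma|\psi(0)|^2$ while preserving $\|\psi\|_{L^{p+1}}$, and the resulting even decreasing minimizing sequence has the uniform $|x|^{-1/2}$ decay that gives compactness in $L^{p+1}(\R)$ outright. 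This is cleaner in the present one-dimensional attractive setting, but note it is genuinely tied to the sign $\gamma>0$ (rearrangement would move mass \emph{toward} a repulsive defect), which is the paper's standing assumption even though the proposition's hypothesis only records $4\omega>\gamma^2$; for $\gamma<0$ the characterization \eqref{d} fails and the cited concentration--compactness route is the one that adapts. Two points to tighten in your write-up: in the final chain $d(\omega)\le \mathcal{S}_{\omega}(|\psi|)\le\mathcal{S}_{\omega}(\psi)=d(\omega)$ you must first verify that $|\psi|$ is admissible, since the diamagnetic inequality only yields $\mathcal{K}_{\omega}(|\psi|)\le 0$; strict inequality is excluded by the same downward rescaling you already use twice, after which equality of the quadratic forms, hence of $\| |\psi|'\|_{L^2}$ and $\|\psi'\|_{L^2}$, follows and (together with the positivity of $|\psi|$, obtained \emph{after} identifying $|\psi|$ as a minimizer solving \eqref{eqQ}) gives the constant phase. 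Also, your matching argument at the origin in the last step actually proves uniqueness of positive $H^1$ solutions of \eqref{eqQ} without assuming evenness, since continuity forces the two half-line shifts to have equal modulus and the jump condition rules out their being equal when $\gamma\neq 0$; this is slightly stronger than the stated uniqueness among radial solutions and is worth recording.
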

By the classical Weyl theorem in \cite{RS1980I} and \Cref{vcsltn}, one can give a precise description of the spectrum of the linearized operator
$\Scal^{\dprime}_{\omega}\sts{Q_{\omega}}:H^1\sts{\R}\times H^1\sts{\R}\mapsto\R$ which is self-adjoint operator and has the following form
\begin{align}
\notag
  \label{linS}
  \Scal^{\dprime}_{\omega}\sts{Q_{\omega}}\sts{f,g}
  =
  &
  \int
    \sts{ f_{1}^{\prime}{g_{1}}^{\prime} +\omega f_{1}{g_{1}}
    -\gamma\delta f_{1}{g_{1}} -p Q_{\omega}^{p-1}f_{1}{g_{1}}
    }
    \\
    &
    +
      \int
      \sts{
      f_{2}^{\prime}{g_{2}}^{\prime} +\omega f_{2}{g_{2}}
      -\gamma\delta f_{2}{g_{2}} - Q_{\omega}^{p-1}f_{2}{g_{2}}
      },
\end{align}
where $f_1$, $g_1$ are the real part of $f$,$g$ respectively, and $f_2$, $g_2$ are the imagination part of $f$,$g$ respectively. By the variational argument, 
Fukuizumi and Jeanjean obtained the following orthogonal decomposition about  $H^1(\R)$ according to the spectrum of the linearized operator $\Scal^{\dprime}_{\omega}\sts{Q_{\omega}}$ in \cite{FJ2008DCDS} (See also \cite{CFFKS2008PhysD}).
\begin{prop}
\label{prop:spectrum}
Let $\gamma>0$ and $\omega$ satisfy $4\omega>\gamma^2$. Then
the space $H^1\sts{\R}$ can be decomposed as the following direct sum 
\begin{equation}
\label{H1docom}
  H^1=N\oplus K\oplus P,
\end{equation}
according to the spectrum of the operator 
$\Scal^{\dprime}_{\omega}\sts{Q_{\omega}}$, where
\begin{enumerate}[label=\textup{(\roman*)}]
\item the subspace $N$, which is spanned by the eigenvector corresponding to the negative eigenvalue $-\mu^2$ of the operator $\Scal^{\dprime}_{\omega}\sts{Q_{\omega}}$, and is one dimensional,
  i.e. for any $f\in N$ with $f\neq 0$, we have
  \begin{equation*}
    \Scal^{\dprime}_{\omega}\sts{Q_{\omega}}\sts{f,f}=-\mu^2\braket{f,f};
  \end{equation*}
\item the subspace $K$ is the kernel (null) space for the operator $\Scal^{\dprime}_{\omega}\sts{Q_{\omega}}$, which is
  \begin{equation*}
    K=\textup{span}\Set{ \i Q_{\omega} };
  \end{equation*}
\item the subspace $P$ where the operator $\Scal^{\dprime}_{\omega}\sts{Q_{\omega}}$ has the coercivity, that is,  for any $f\in P$, we have
  \begin{equation*}
    \Scal^{\dprime}_{\omega}\sts{Q_{\omega}}\sts{f,f}
      \geq c \norm{f}_{H^1}^2,
  \end{equation*}
  where $c$ is a positive constant which does not depend on $f$.
\end{enumerate}
\end{prop}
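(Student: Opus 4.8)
The plan is to diagonalize the real and imaginary sectors of the quadratic form $\Lcal=\Scal^{\dprime}_{\omega}\sts{Q_\omega}$ and to analyze the two resulting scalar Schr\"odinger operators separately. Writing $f=f_1+\i f_2$ with $f_1,f_2$ real, formula \eqref{linS} gives the orthogonal splitting
\begin{equation*}
\Lcal\sts{f,f}=\braket{L_+f_1,f_1}+\braket{L_-f_2,f_2},
\end{equation*}
where $L_+=-\partial_x^2+\omega-\gamma\delta-pQ_\omega^{p-1}$ and $L_-=-\partial_x^2+\omega-\gamma\delta-Q_\omega^{p-1}$ are self-adjoint on $L^2\sts{\R}$ with form domain $H^1\sts{\R}$. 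Since $Q_\omega^{p-1}\to0$ as $\abs{x}\to\infty$ and the delta is a relatively compact perturbation, Weyl's theorem (cf.\ \cite{RS1980I}) gives $\sigma_{\mathrm{ess}}\sts{L_\pm}=[\omega,+\infty)$; hence below $\omega$ the spectrum of each operator is discrete, with every eigenvalue there isolated and of finite multiplicity.

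First I would treat $L_-$. Equation \eqref{eqQ} states precisely that $L_-Q_\omega=0$, and since $Q_\omega>0$ it is the nodeless ground state, so $L_-\geq0$, the eigenvalue $0$ is simple, and $\ker L_-=\text{span}\{Q_\omega\}$. In the complex picture this yields $K=\text{span}\{\i Q_\omega\}$ as the full kernel of $\Lcal$, which is assertion (ii). The spectral gap above $0$ provides $\kappa>0$ with $\braket{L_-g,g}\geq\kappa\norm{g}_{L^2}^2$ for every real $g\perp Q_\omega$; as $L_-+C$ controls $\norm{\cdot}_{H^1}^2$, this upgrades to $H^1$-coercivity of $L_-$ on $\{Q_\omega\}^{\perp}$.

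The operator $L_+$ is the delicate part. From \eqref{eqQ} one computes $L_+Q_\omega=-\sts{p-1}Q_\omega^{p}<0$, so $\braket{L_+Q_\omega,Q_\omega}<0$ and the lowest eigenvalue of $L_+$ is strictly negative; denote it $-\mu^2$, with positive ground state $\chi$. This produces the one-dimensional negative subspace $N=\text{span}\{\chi\}$ and gives (i). Two spectral facts remain. For the count, I would invoke \Cref{vcsltn}: $Q_\omega$ minimizes $\Scal_\omega$ on the constraint $\{\Kcal_\omega=0\}$, which is nondegenerate at $Q_\omega$ because $\Kcal_\omega'\sts{Q_\omega}h=-\sts{p-1}\braket{h,Q_\omega^{p}}$ does not vanish identically; since $\Scal_\omega'\sts{Q_\omega}=0$ the Lagrange multiplier vanishes, and the second-order minimality condition forces $\Lcal\geq0$ on the codimension-one tangent space. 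A symmetric form that is nonnegative off a codimension-one subspace has negative index at most one, and as $L_-\geq0$ this index sits entirely in $L_+$; combined with the negative direction above, $L_+$ has exactly one negative eigenvalue. For the triviality of $\ker L_+$ I would exploit that the delta breaks translation invariance. Since $L_+$ preserves parity, a putative zero mode $h$ splits into even and odd parts, each solving $L_+h=0$ on $(0,+\infty)$ with the $L^2$ branch proportional to $Q_\omega'$ and subject to the jump $h'\sts{0+}-h'\sts{0-}=-\gamma h\sts{0}$. Matching this jump against the explicit profile \eqref{Q} reduces, in the odd sector, to $Q_\omega'\sts{0+}=0$, and, in the even sector, to the identity $\tfrac{p+1}{2}\sts{\omega-\tfrac{\gamma^2}{4}}=\omega-\tfrac{\gamma^2}{4}$; both fail for $\gamma>0$ and $p>5$. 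Hence $0$ lies in the resolvent set of $L_+$ and its second eigenvalue is strictly positive.

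Finally I would assemble the pieces. Set $P$ to be the $\braket{\cdot,\cdot}$-orthogonal complement of $N\oplus K$ in $H^1\sts{\R}$, so that $H^1=N\oplus K\oplus P$, with $N\perp K$ because one summand is real and the other purely imaginary. For $f\in P$ the conditions $f\perp\chi$ and $f\perp\i Q_\omega$ translate into $f_1\perp\chi$ and $f_2\perp Q_\omega$. On the imaginary part $L_-$ is coercive by the second step, while on the real part $f_1\perp\chi$ places $f_1$ in the positive spectral subspace of $L_+$, where the gap above $0$ again gives coercivity. Adding the two lower bounds yields $\Lcal\sts{f,f}\geq c\norm{f}_{H^1}^2$ for all $f\in P$, which is (iii). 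I expect the crux to be the two spectral facts for $L_+$: the Morse-index count and, above all, the vanishing of the kernel, the latter being precisely where the broken translation symmetry of the delta potential must be used.
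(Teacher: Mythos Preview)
The paper does not give its own proof of this proposition; it simply attributes the result to \cite{FJ2008DCDS} (see also \cite{CFFKS2008PhysD}) and states it without argument. So there is nothing in the paper to compare your proof against directly.

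Your argument is a correct, self-contained proof along the standard lines one finds in those references: split $\Lcal$ into $L_\pm$ via \eqref{linS}, use Weyl's theorem for the essential spectrum, handle $L_-$ by the positivity of $Q_\omega$, and for $L_+$ combine (a) a direct negative test vector, (b) the constrained-minimization second-order condition from \Cref{vcsltn} to cap the Morse index at one, and (c) an explicit ODE matching argument at $x=0$ to kill the kernel. The last step is where the broken translation invariance enters, and your computation is right: the even-sector matching reduces to $\tfrac{p+1}{2}\bigl(\omega-\tfrac{\gamma^2}{4}\bigr)=\omega-\tfrac{\gamma^2}{4}$, which already fails for any $p>1$ and $4\omega>\gamma^2$, while the odd sector is excluded by $Q_\omega'(0+)=-\tfrac{\gamma}{2}Q_\omega(0)\neq 0$ for any $\gamma\neq 0$. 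So the hypotheses $\gamma>0$, $p>5$ are more than enough. This is essentially the route taken in \cite{FJ2008DCDS}, so your proof supplies what the paper chose to outsource.
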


Next, we turn to investigate some properties of 
$$\mathbf{d}\sts{\omega}=    \Scal_{\omega}\sts{Q_{\omega}}$$ which is related to the landscape of the action functional $\Scal_{\omega}$ around $Q_{\omega}$. By \Cref{vcsltn} and \eqref{eqQ}, we have
for all $\omega$ satisfying $4\omega>\gamma^2$,
\begin{align}
\label{d1omega}
  \mathbf{d}^{\prime}\sts{\omega}
  =& \Mcal\sts{Q_{\omega}}.
\end{align}

Moreover, let 
\begin{equation}
\label{phi}
\varphi_{\omega}\sts{x}=\frac{\partial Q_{\omega}}{\partial \omega}\sts{x},
\end{equation}
 we have
\begin{align}
\notag
\mathbf{d}^{\dprime}\sts{\omega}
=&
\frac{\partial^2}{\partial\omega^2}\Scal_{\omega}\sts{Q_{\omega}}
\\
\notag
=&
\braket{\Scal_{\omega}^{\prime}\sts{Q_{\omega}},\frac{\partial^2}{\partial\omega^2}Q_{\omega}}
+
\Scal^{\dprime}_{\omega}\sts{Q_{\omega}}
\sts{ \frac{\partial}{\partial\omega}Q_{\omega},\frac{\partial}{\partial\omega}Q_{\omega} }
+
2\braket{ Q_{\omega}, \frac{\partial}{\partial\omega}Q_{\omega}}
\\
=&
\Scal^{\dprime}_{\omega}\sts{Q_{\omega}}
\sts{ \varphi_{\omega} , \varphi_{\omega} }
+
2\braket{ Q_{\omega}, \varphi_{\omega} },
\label{d2omega}
\end{align}
where we used the fact that $\Scal_{\omega}^{\prime}\sts{Q_{\omega}}=0$.
Furthermore, we have
\begin{lemm}
\label{phi2Q}
Let $p>5$, $4\omega>\gamma^2$ and $\varphi_{\omega}$ defined by \eqref{phi}, the following result holds
  \begin{equation}
  \label{eq:phi2Q}
    \Scal_{\omega}^{\dprime}\sts{Q_{\omega}}\sts{\varphi_{\omega},\psi}=-\braket{Q_{\omega},\psi}, \quad \text{for any }
    \;  \psi\in H^1\sts{\R}.
  \end{equation}
\end{lemm}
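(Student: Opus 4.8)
The plan is to differentiate the defining equation \eqref{eqQ} for $Q_\omega$ with respect to $\omega$ and to read off the result as an identity involving the linearized operator $\Scal''_\omega(Q_\omega)$. Recall that $Q_\omega$ solves $\Scal'_\omega(Q_\omega)=0$, i.e. in the notation of \eqref{eqQ},
\begin{equation*}
-\partial_x^2 Q_\omega + \omega Q_\omega - \gamma\delta Q_\omega - Q_\omega^p = 0,
\end{equation*}
where we use that $Q_\omega>0$ so $f(Q_\omega)=Q_\omega^p$ and $f'(Q_\omega)Q_\omega=pQ_\omega^p$ on the real axis. Since $Q_\omega$ depends smoothly on $\omega$ (this is manifest from the explicit formula \eqref{Q}, the smoothness of $\arctanh$ and $\sech$, and $4\omega>\gamma^2$), I may differentiate this identity in $\omega$ with $\varphi_\omega=\partial_\omega Q_\omega$ as in \eqref{phi}.

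First I would carry out the differentiation termwise:
\begin{equation*}
-\partial_x^2 \varphi_\omega + \omega\varphi_\omega + Q_\omega - \gamma\delta\varphi_\omega - pQ_\omega^{p-1}\varphi_\omega = 0,
\end{equation*}
that is,
\begin{equation*}
\left(-\partial_x^2 + \omega - \gamma\delta - pQ_\omega^{p-1}\right)\varphi_\omega = -Q_\omega.
\end{equation*}
The operator on the left is exactly the real (first-component) part of the self-adjoint operator associated with the bilinear form $\Scal''_\omega(Q_\omega)$ displayed in \eqref{linS}; since $\varphi_\omega$ is real-valued (being the $\omega$-derivative of the real function $Q_\omega$), pairing with an arbitrary $\psi\in H^1(\R)$ and taking the real part of the $L^2$ inner product yields, by the very definition \eqref{linS} of $\Scal''_\omega(Q_\omega)(\varphi_\omega,\psi)$,
\begin{equation*}
\Scal''_\omega(Q_\omega)(\varphi_\omega,\psi) = \left\langle \left(-\partial_x^2 + \omega - \gamma\delta - pQ_\omega^{p-1}\right)\varphi_\omega,\ \psi\right\rangle = -\langle Q_\omega,\psi\rangle,
\end{equation*}
which is precisely \eqref{eq:phi2Q}. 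The only point requiring a little care is the interpretation of the $\gamma\delta$ term: the identity should be read in the weak $H^1$ sense, where $\langle \gamma\delta\varphi_\omega,\psi\rangle$ means $\gamma\varphi_\omega(0)\psi(0)$, matching the $-\gamma\delta f_1 g_1$ term in \eqref{linS}; alternatively one works entirely with the quadratic form $\Scal_\omega$ and differentiates the identity $\Scal'_\omega(Q_\omega)=0$ directly, which bypasses any distributional subtlety.

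I expect the main (modest) obstacle to be purely a matter of bookkeeping rather than genuine difficulty: namely justifying that $\omega\mapsto Q_\omega$ is $C^1$ into $H^1(\R)$ so that the differentiation under the equation is legitimate, and correctly matching the differentiated equation against the abstract bilinear form \eqref{linS} — in particular tracking that the nonlinear term $Q_\omega^p$ differentiates to $pQ_\omega^{p-1}\varphi_\omega$, producing exactly the potential $-pQ_\omega^{p-1}$ appearing in the first component of $\Scal''_\omega(Q_\omega)$. The $H^1$-regularity in $\omega$ is immediate from the closed form \eqref{Q} together with dominated convergence (all terms decay exponentially uniformly for $\omega$ in a compact subset of $(\gamma^2/4,\infty)$), so the argument closes without any essential new estimate.
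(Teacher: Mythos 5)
Your proposal is correct and follows essentially the same route as the paper: both differentiate the defining equation \eqref{eqQ} in $\omega$ to get $\bigl(-\partial_x^2+\omega-\gamma\delta-pQ_\omega^{p-1}\bigr)\varphi_\omega=-Q_\omega$ and then pair with $\psi$ via \eqref{linS}. The paper merely makes the delta-potential bookkeeping explicit by separately verifying the jump condition $\varphi_\omega'(0+)-\varphi_\omega'(0-)=-\gamma\varphi_\omega(0)$ from the closed form \eqref{Q}, which is exactly the point you address by reading the identity in the weak ($H^1$ quadratic-form) sense.
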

\begin{proof}
  It is a well-known result and we can also refer to  Lemma $2.7$ in  \cite{FJ2008DCDS}. In fact,  It suffices to check that the following facts hold
  \begin{align}
    \label{phijump}
      \sts{\frac{\partial \varphi_{\omega}}{\partial x}}\sts{0+}
      -
      \sts{\frac{\partial \varphi_{\omega}}{\partial x}}\sts{0-}
      =
      -\gamma\varphi_{\omega}\sts{0},
  \end{align}
  and
  \begin{align}
    \label{eq-phi2Q}
      -\frac{\partial^2 \varphi_{\omega}}{\partial x^2}(x)+\omega \varphi_{\omega}(x) -p {Q_{\omega}(x)}^{p-1}\varphi_{\omega}(x)=-Q_{\omega}(x),
      \quad\text{ for all } x\neq 0.
    \end{align}

  On the one hand, since $Q_{\omega}$ can be explicitly expressed by \eqref{Q}, a direct computation implies that \eqref{phijump} holds.
  On the other hand, since $Q_{\omega}$ satisfies \eqref{eqQ}, 
we can obtain \eqref{eq-phi2Q}  by taking derivative with respect to $\omega$ in \eqref{eqQ}.
\end{proof}

As a consequence of  \eqref{d2omega} and \eqref{eq:phi2Q}, we know that
  \begin{equation}
  \label{deg:Q}
  \mathbf{d}^{\dprime}\sts{\Omega}=0 \iff
      \braket{ \varphi_{\Omega} , Q_{\Omega}}=0.
\end{equation}
It corresponds to the degenerate case, and  the Vakhitov-Kolokolov stability criterion in  \cite{VK1973RQE} (see also  \cite{A2009Stab, GSS1987JFA1,SS1985CMP}) breaks down in this case. In fact, one can still consider the stability (or instability) of the solitary waves
through the non-degenerate behavior of higher order derivative of $\mathbf{d}\sts{\omega}$  as those in \cite{MM2001GAFA}  (see also  \cite{CP2003CPAM, M2012JFA, MTX2018, O2011JFA}).
For this purpose, we first characterize the behavior of    $\mathbf{d}\sts{\omega}$ at the critical value $\Omega$.
\begin{lemm}
\label{lem:d3rdd}
  Let $p>5$ and $\Omega$ be defined by \eqref{Omegaeq}. Then we have the following results.
\begin{enumerate}
	\item For the case $\frac{\gamma^2}{4}< \omega < \Omega$, we have $   \mathbf{d}^{\dprime}\sts{\omega}  >0$;
	\item  For the case $ \omega = \Omega$, we have $ \mathbf{d}^{\dprime}\sts{\Omega}   =0, $ and $	\mathbf{d}^{\tprime}\sts{\Omega}<0$;
	\item  For the case $\omega>\Omega$, we have $  \mathbf{d}^{\dprime}\sts{\omega}  <0$.
\end{enumerate}
Furthermore, $\mathbf{d}^{\tprime}\sts{\Omega}$ can be explicitely expressed as following
  \begin{equation}
  \label{eq:d3rdd}
    \mathbf{d}^{\tprime}\sts{\Omega}
    =
    \Scal_{\Omega}^{\tprime}\sts{Q_{\Omega}}\sts{ \varphi_{\Omega},\varphi_{\Omega},\varphi_{\Omega} }
    + 
    3\braket{\varphi_{\Omega} ,\varphi_{\Omega} },
  \end{equation}
  where $\varphi_{\Omega}$ is defined by \eqref{phi}.
\end{lemm}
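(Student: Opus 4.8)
The plan is to reduce the whole statement to one‑variable calculus for the $L^2$–mass of the explicit profile \eqref{Q}, and then to read the sign of $\mathbf{d}''$ off a strictly monotone auxiliary function. Set $a(\omega):=\arctanh(\tfrac{\gamma}{2\sqrt{\omega}})$. By \eqref{d1omega} and \eqref{mass}, $\mathbf{d}''(\omega)=\tfrac12\tfrac{d}{d\omega}\norm{Q_\omega}_2^2=:\tfrac12 g'(\omega)$, so everything is governed by $g(\omega)=\norm{Q_\omega}_2^2$. Inserting \eqref{Q}, using that $Q_\omega$ is even, and substituting $y=\tfrac{(p-1)\sqrt{\omega}}{2}\abs{x}+a(\omega)$ gives
\[ g(\omega)=C_p\,\omega^{\frac{2}{p-1}-\frac12}\int_{a(\omega)}^{+\infty}\sech^{\frac{4}{p-1}}(y)\,dy,\qquad C_p=\tfrac{4}{p-1}\bigl(\tfrac{p+1}{2}\bigr)^{\frac{2}{p-1}}>0, \]
and logarithmic differentiation, using $a'(\omega)=-\tfrac{\gamma}{4}\omega^{-3/2}(1-\tfrac{\gamma^2}{4\omega})^{-1}$ and $\sech^2 a(\omega)=1-\tfrac{\gamma^2}{4\omega}$, collapses to
\[ g'(\omega)=-c(\omega)\,h(\omega),\qquad h(\omega):=\tfrac{p-5}{p-1}\int_{a(\omega)}^{+\infty}\sech^{\frac{4}{p-1}}(y)\,dy-\tfrac{\gamma}{2\sqrt{\omega}}\bigl(1-\tfrac{\gamma^2}{4\omega}\bigr)^{-\frac{p-3}{p-1}}, \]
with $c(\omega)>0$ on $(\tfrac{\gamma^2}{4},+\infty)$. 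The function $h(\omega)$ is exactly the signed difference of the two sides of \eqref{Omegaeq}, so $\mathbf{d}''(\omega)=0\iff h(\omega)=0\iff$ \eqref{Omegaeq} holds at $\omega$.

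To obtain the trichotomy I would pass to the variable $s=\tfrac{\gamma}{2\sqrt{\omega}}$, a strictly decreasing bijection of $(\tfrac{\gamma^2}{4},+\infty)$ onto $(0,1)$, and put
\[ \Phi(s):=\tfrac{p-5}{p-1}\int_{\arctanh s}^{+\infty}\sech^{\frac{4}{p-1}}(y)\,dy-s\,(1-s^2)^{-\frac{p-3}{p-1}}, \]
so that $h(\omega)=\Phi(s(\omega))$. Using $\sech^2(\arctanh s)=1-s^2$ once more, a short computation yields
\[ \Phi'(s)=-(1-s^2)^{-\frac{p-3}{p-1}}\left[\tfrac{p-5}{p-1}+\tfrac{(p-1)+(p-5)s^2}{(p-1)(1-s^2)}\right]<0\qquad(p>5,\ s\in(0,1)), \]
the key point being that the numerator $(p-1)+(p-5)s^2$ is positive. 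Since $\Phi(0^+)=\tfrac{p-5}{p-1}\int_0^{+\infty}\sech^{\frac{4}{p-1}}(y)\,dy>0$ and $\Phi(s)\to-\infty$ as $s\to1^-$, $\Phi$ has a unique zero $s_\ast\in(0,1)$, positive on $(0,s_\ast)$ and negative on $(s_\ast,1)$. By \eqref{Omegaeq}, $\Omega$ is the unique $\omega$ with $s(\omega)=s_\ast$; as $s$ decreases in $\omega$, the range $\tfrac{\gamma^2}{4}<\omega<\Omega$ corresponds to $s\in(s_\ast,1)$ and $\omega>\Omega$ to $s\in(0,s_\ast)$. Feeding this into $\mathbf{d}''(\omega)=-\tfrac12 c(\omega)\Phi(s(\omega))$ gives $\mathbf{d}''(\omega)>0$ for $\tfrac{\gamma^2}{4}<\omega<\Omega$, $\mathbf{d}''(\Omega)=0$, and $\mathbf{d}''(\omega)<0$ for $\omega>\Omega$, which is (1), (3) and the first half of (2).

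It remains to prove $\mathbf{d}'''(\Omega)<0$ and the identity \eqref{eq:d3rdd}. For the former: $c$ is smooth and $h(\Omega)=0$, so differentiating $\mathbf{d}''=-\tfrac12 c\,h$ yields $\mathbf{d}'''(\Omega)=-\tfrac12 c(\Omega)h'(\Omega)=-\tfrac12 c(\Omega)\Phi'(s_\ast)\,s'(\Omega)$; since $\Phi'(s_\ast)<0$ and $s'(\Omega)=-\tfrac{\gamma}{4}\Omega^{-3/2}<0$, we get $\mathbf{d}'''(\Omega)<0$ (equivalently: $h$ vanishes transversally at $\Omega$, so $\mathbf{d}'''(\Omega)\neq0$, and the sign is forced by (1) and (3); the explicit value of $\mathbf{d}'''(\Omega)$ in terms of $p,\gamma,\Omega$ is the subject of Appendix \ref{app:d3rdd}). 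For \eqref{eq:d3rdd} I would differentiate the identity \eqref{d2omega} in $\omega$: writing $\Scal_\omega=\Ecal+\omega\Mcal$ with $\Mcal$ quadratic — so $\Scal_{\omega}'''=\Ecal'''$ is $\omega$‑independent and $\Scal_{\omega}''(u)(v,w)=\Ecal''(u)(v,w)+\omega\braket{v,w}$ — the chain rule (with $\partial_\omega\varphi_\omega=\partial_\omega^2 Q_\omega\in H^1$) gives
\[ \mathbf{d}'''(\omega)=\Scal_{\omega}'''(Q_\omega)(\varphi_\omega,\varphi_\omega,\varphi_\omega)+2\Scal_{\omega}''(Q_\omega)(\partial_\omega\varphi_\omega,\varphi_\omega)+3\braket{\varphi_\omega,\varphi_\omega}+2\braket{Q_\omega,\partial_\omega\varphi_\omega}, \]
and since \eqref{eq:phi2Q} (with the symmetry of $\Scal_{\omega}''(Q_\omega)$) gives $\Scal_{\omega}''(Q_\omega)(\partial_\omega\varphi_\omega,\varphi_\omega)=-\braket{Q_\omega,\partial_\omega\varphi_\omega}$, the $\partial_\omega\varphi_\omega$‑terms cancel, leaving $\mathbf{d}'''(\omega)=\Scal_{\omega}'''(Q_\omega)(\varphi_\omega,\varphi_\omega,\varphi_\omega)+3\braket{\varphi_\omega,\varphi_\omega}$; at $\omega=\Omega$ this is \eqref{eq:d3rdd}.

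The main obstacle is the middle step: carrying out the logarithmic differentiation cleanly and reducing $\Phi'(s)$ to a manifestly negative expression. Everything — both the trichotomy for $\mathbf{d}''$ and the transversal vanishing that upgrades $\mathbf{d}'''(\Omega)\le0$ to $\mathbf{d}'''(\Omega)<0$ — hinges on the single elementary inequality $(p-1)+(p-5)s^2>0$ for $p>5$, $s\in(0,1)$; once that is secured, the rest is bookkeeping.
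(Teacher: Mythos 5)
Your argument is correct, and its computational core is the same as the paper's Appendix~\ref{app:d3rdd}: reduce $\mathbf{d}''$ to the $\omega$-derivative of the explicit mass $\|Q_\omega\|_2^2$ computed from \eqref{Q}, identify the vanishing of that derivative with \eqref{Omegaeq}, and obtain \eqref{eq:d3rdd} by differentiating \eqref{d2omega} and cancelling the $\partial_\omega\varphi_\omega$-terms via \eqref{eq:phi2Q} (your last step is verbatim the paper's). The organization differs in two useful ways. First, the paper simply cites \cite{FOO2008AIHP} for parts (1) and (3) and only verifies $\mathbf{d}''(\Omega)=0$ and $\mathbf{d}'''(\Omega)<0$; you instead prove the full trichotomy from scratch by showing that the auxiliary function $\Phi(s)$ is strictly decreasing on $(0,1)$ with $\Phi(0^+)>0$ and $\Phi(1^-)=-\infty$, which as a by-product establishes that \eqref{Omegaeq} has a unique solution $\Omega$, i.e.\ that the hypothesis of the lemma is well posed — something the paper takes for granted. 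Second, for $\mathbf{d}'''(\Omega)<0$ the paper parametrizes by $\lambda=2\sqrt{\omega}/\gamma$ and evaluates the second derivative of the mass at the critical point directly, simplifying with the critical equation to reach $\frac{6-2p}{p-1}(\cdot)<0$; you factor $\mathbf{d}''=-\tfrac12\,c\,h$ and use $h(\Omega)=0$ together with $\Phi'(s_\ast)<0$ and $s'(\Omega)<0$, so the transversality and the sign trichotomy both come from the single monotonicity computation. The two routes are equivalent in substance (your $s$ is the paper's $1/\lambda$), but yours is more self-contained; the only cost is that you must carry out the elementary but slightly fiddly verification that $(p-1)+(p-5)s^2>0$, which for $p>5$ is immediate.
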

The proof of \Cref{lem:d3rdd} is postponed in  \Cref{app:d3rdd}.

\subsection{Geometric decomposition of $u$ and landscape of $\Scal_{\Omega}$ near $Q_{\Omega}$}\label{SubS:GD}
For the non-degenerate case, i.e. $ \mathbf{d}^{\dprime}\sts{\omega}< 0$ with $\omega>\Omega$,  the first order approximation of the solitary wave  in the unstable dirction  is enough to show the instability of the solitary waves, while for the degenerate case $\mathbf{d}^{\dprime}\sts{\Omega}= 0$ and $\mathbf{d}'''(\Omega) \not = 0$, we are going to consider the second order approximation of the solitary waves $Q_{\Omega}e^{i\Omega t}$,  up to the phase rotation invariance, on the level set $\Mcal(Q_{\Omega})$ to show its instability in the energy space. 
\begin{lemm}
\label{lem:ift}
There exist a constant  $0<\tilde{\lambda}_0 \ll 1$ and
a $\Ccal^2$ function $\tilde{\rho}:\sts{-\tilde{\lambda}_0,\tilde{\lambda}_0}\mapsto\R$ such that
for any $\lambda\in \sts{-\tilde{\lambda}_0,\tilde{\lambda}_0}$, we have 
\begin{equation*}
  \Mcal\sts{ Q_{\Omega}+\lambda \varphi_{\Omega}+\tilde{\rho}\sts{\lambda}Q_{\Omega} }
  =
  \Mcal\sts{Q_{\Omega}},
\end{equation*}
where the function $\tilde{\rho}(\lambda)$ can be expressed as following:
\begin{equation}
\label{func:trho}
  \tilde{\rho}\sts{\lambda}
  =
  -\frac{ \norm{ \varphi_{\Omega} }_{2}^{2} }{ 2\norm{Q_{\Omega}}_{2}^{2} }\lambda^2
  +\textup{o}\sts{\lambda^2},
  \quad
  \text{~for~any~} \lambda\in  \sts{-\tilde{\lambda}_0,\tilde{\lambda}_0}.
\end{equation}
\end{lemm}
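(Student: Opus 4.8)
The plan is to apply the implicit function theorem to the $\Ccal^2$ map
\[
G:\sts{-1,1}\times\R\longrightarrow\R,\qquad
G\sts{\lambda,\rho}=\Mcal\sts{Q_{\Omega}+\lambda\varphi_{\Omega}+\rho\, Q_{\Omega}}-\Mcal\sts{Q_{\Omega}}.
\]
Since $\Mcal$ is a quadratic functional on $H^1\sts{\R}$ and hence smooth, $G$ is $\Ccal^\infty$ (certainly $\Ccal^2$) in both variables. One has $G\sts{0,0}=0$, and
\[
\partial_{\rho}G\sts{\lambda,\rho}
=\braket{Q_{\Omega}+\lambda\varphi_{\Omega}+\rho\,Q_{\Omega},\,Q_{\Omega}},
\]
so $\partial_{\rho}G\sts{0,0}=\braket{Q_{\Omega},Q_{\Omega}}=\norm{Q_{\Omega}}_2^2>0$. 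The implicit function theorem then yields $\tilde\lambda_0>0$ and a $\Ccal^2$ function $\tilde\rho:\sts{-\tilde\lambda_0,\tilde\lambda_0}\to\R$ with $\tilde\rho\sts{0}=0$ and $G\sts{\lambda,\tilde\rho\sts{\lambda}}\equiv 0$, which is exactly the claimed mass constraint $\Mcal\sts{Q_{\Omega}+\lambda\varphi_{\Omega}+\tilde\rho\sts{\lambda}Q_{\Omega}}=\Mcal\sts{Q_{\Omega}}$.

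It remains to extract the Taylor expansion \eqref{func:trho} of $\tilde\rho$. First I would note $\tilde\rho'\sts{0}=0$: differentiating $G\sts{\lambda,\tilde\rho\sts{\lambda}}=0$ at $\lambda=0$ gives $\partial_{\lambda}G\sts{0,0}+\partial_{\rho}G\sts{0,0}\,\tilde\rho'\sts{0}=0$, and $\partial_{\lambda}G\sts{0,0}=\braket{\varphi_{\Omega},Q_{\Omega}}=0$ by the degeneracy relation \eqref{deg:Q} (equivalently $\mathbf{d}''\sts{\Omega}=0$), whence $\tilde\rho'\sts{0}=0$. For the second-order term, the cleanest route is to use that $\Mcal$ is exactly quadratic: writing $w\sts{\lambda}=\lambda\varphi_{\Omega}+\tilde\rho\sts{\lambda}Q_{\Omega}$,
\[
0=\Mcal\sts{Q_{\Omega}+w\sts{\lambda}}-\Mcal\sts{Q_{\Omega}}
=\braket{Q_{\Omega},w\sts{\lambda}}+\tfrac12\norm{w\sts{\lambda}}_2^2 .
\]
Expanding in powers of $\lambda$ and using $\braket{\varphi_{\Omega},Q_{\Omega}}=0$, the $\lambda^2$ coefficient gives
\[
\tilde\rho''\sts{0}\,\tfrac12\braket{Q_{\Omega},Q_{\Omega}}\cdot 2 \;=\; \braket{Q_{\Omega},Q_{\Omega}}\,\tilde\rho''\sts{0}
= -\tfrac12\cdot 2\norm{\varphi_{\Omega}}_2^2,
\]
that is $\tilde\rho''\sts{0}=-\norm{\varphi_{\Omega}}_2^2/\norm{Q_{\Omega}}_2^2$. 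Taylor's theorem with the $\Ccal^2$ regularity of $\tilde\rho$ then gives $\tilde\rho\sts{\lambda}=-\frac{\norm{\varphi_{\Omega}}_2^2}{2\norm{Q_{\Omega}}_2^2}\lambda^2+\textup{o}\sts{\lambda^2}$, as claimed.

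This argument is essentially routine; there is no serious obstacle. The only point that requires care — and it is the genuinely substantive input — is the vanishing $\braket{\varphi_{\Omega},Q_{\Omega}}=0$, which is precisely the degeneracy hypothesis $\mathbf{d}''\sts{\Omega}=0$ recorded in \eqref{deg:Q} and which is what forces the leading behaviour of $\tilde\rho$ to be quadratic rather than linear in $\lambda$. (Were $\mathbf{d}''\sts{\Omega}\neq 0$ one would instead get $\tilde\rho\sts{\lambda}=-\frac{\braket{\varphi_{\Omega},Q_{\Omega}}}{\norm{Q_{\Omega}}_2^2}\lambda+\OO\sts{\lambda^2}$, the non-degenerate situation.) Everything else — smoothness of $\Mcal$, positivity of $\norm{Q_{\Omega}}_2^2$, and the Taylor bookkeeping — is immediate.
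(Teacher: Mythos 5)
Your proposal is correct and follows essentially the same route as the paper: the implicit function theorem applied to the very same map $G\sts{\lambda,\rho}=\Mcal\sts{Q_{\Omega}+\lambda\varphi_{\Omega}+\rho Q_{\Omega}}-\Mcal\sts{Q_{\Omega}}$, with $\partial_\rho G\sts{0,0}=\norm{Q_{\Omega}}_2^2\neq0$, followed by extraction of $\tilde\rho'\sts{0}=0$ and $\tilde\rho''\sts{0}=-\norm{\varphi_{\Omega}}_2^2/\norm{Q_{\Omega}}_2^2$ using the degeneracy $\braket{\varphi_{\Omega},Q_{\Omega}}=0$. The only cosmetic difference is that you read off the $\lambda^2$ coefficient from the exact quadratic identity for $\Mcal$ whereas the paper differentiates $g\sts{\lambda}=G\sts{\lambda,\tilde\rho\sts{\lambda}}$ twice by the chain rule; these are the same computation.
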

\begin{proof}
 Essentially, the result is a consequence of the Implicit Function Theorem.
Let us define the function $G(\lambda, \rho)$ as following:
\begin{equation*}
  G\sts{\lambda,\rho}
  =
  \Mcal\sts{ Q_{\Omega}+\lambda \varphi_{\Omega}+\rho  Q_{\Omega} }
  -
  \Mcal\sts{Q_{\Omega}}.
\end{equation*}
By the simple calculations, one can obtain that $ G\sts{0,0}=0$, and  
\begin{equation}
\label{func:G00}
  \frac{\partial G}{\partial\lambda}\sts{0,0}=0,
  \quad
  \frac{\partial G}{\partial\rho}\sts{0,0}
  =\norm{Q_{\Omega}}_{2}^{2},
\end{equation}
and 
\begin{equation}
\label{func:G02}
\frac{\partial^2 G}{\partial\lambda^2}\sts{0,0}=\norm{ \varphi_{\Omega} }_{2}^{2},
\quad 
\frac{\partial^2 G}{\partial\lambda\partial\rho}\sts{0,0}=0,
\quad 
\frac{\partial^2 G}{\partial\rho^2}\sts{0,0}=\norm{ Q_{\Omega} }_{2}^{2},
\end{equation}
then by the Implicit Function Theorem, there exist a $\tilde{\lambda}_0$ with $0<\tilde{\lambda}_0 \ll 1$
and a $\Ccal^2$ function
$\tilde{\rho}:\sts{-\tilde{\lambda}_0,\tilde{\lambda}_0}\mapsto\R$ such that
\begin{equation}
\label{func:g}
  g\sts{\lambda}=G\sts{ \lambda,\tilde{\rho}\sts{\lambda} }=0,
  \quad\text{for all}\quad
  \lambda\in \sts{-\tilde{\lambda}_0,\tilde{\lambda}_0}.
\end{equation}
Therefore, it follows from \eqref{func:g} that
\begin{align}
\label{g1std}
  0=\frac{\d g}{\d \lambda}\sts{0}
  =\frac{\partial G}{\partial\lambda}\sts{0,0}
    +
    \frac{\partial G}{\partial\rho}\sts{0,0}\frac{\d\tilde{\rho}}{\d\lambda}\sts{0},
\end{align}
then by \eqref{func:G00},  we obtain
\begin{equation}
\label{rho1std00}
  \frac{\d\tilde{\rho}}{\d\lambda}\sts{0}=0.
\end{equation}
Again, by taking the second order derivative of the function $g$ with respect to $\lambda$  at $0$, we have
\begin{align}
\label{func:g02}
  0=\frac{\d^2 g}{\d \lambda^2}\sts{0}
  =
  \frac{\partial^2 G}{\partial\lambda^2}\sts{0,0}
  +
  2\frac{\partial^2 G}{\partial\lambda\partial\rho}\sts{0,0}\frac{\d \tilde{\rho}}{\d\lambda}\sts{0}
  +
  \frac{\partial G}{\partial\rho}\sts{0,0}\frac{\d^2 \tilde{\rho}}{\d\lambda^2}\sts{0}
\end{align}
By \eqref{func:G02} and \eqref{rho1std00}, we get
\begin{equation}
\label{func:rho2ndd00}
  \frac{\d^2 \tilde{\rho}}{\d\lambda^2}\sts{0}
  =
  -\frac{ \norm{ \varphi_{\Omega} }_{2}^{2} }{ \norm{ Q_{\Omega} }_{2}^{2} }.
\end{equation}
By the fundamental theorem of calculus, one can obtain the result. 
\end{proof}

From now on, we will take the function $\rho(\lambda)$ as the main part of  $\tilde{\rho}(\lambda)$, i.e.
\begin{equation}
  \label{func:rho}
    {\rho}\sts{\lambda}
    =
    -\frac{ \norm{ \varphi_{\Omega} }_{2}^{2} }{ 2\norm{Q_{\Omega}}_{2}^{2} }\lambda^2.
\end{equation}
Now, we can show the refined modulational decomposition of the functions around the solitary waves $Q_{\Omega}$.

\begin{lemm}
\label{lem:ift2}
There exists  $0<\tilde{\eta}_0\ll 1$ and a unique  $\Ccal^1$ map $(\theta, \lambda):  \Ucal\sts{ Q_{\Omega}, \tilde{\eta}_0 }\mapsto\R$ such that if $u\in  \Ucal\sts{ Q_{\Omega}, \tilde{\eta}_0 }$ and $\eps_{\theta, \lambda}(x)$ is defined by 
\begin{equation*}
  \eps_{\theta,\lambda}\sts{x}=u\sts{x}\e^{\i\theta}-\big(  Q_{\Omega}+\lambda\varphi_{\Omega}+\rho\sts{\lambda}Q_{\Omega} \big)(x)
\end{equation*}
where $\rho(\lambda)$ is define by \eqref{func:rho}, then we have the following orthogonal structure
\begin{equation*}
        \eps_{\theta,\lambda}\perp \i Q_{\Omega}
        \quad\text{and} \quad
        \eps_{\theta,\lambda}\perp \varphi_{\Omega}.
      \end{equation*}
Moreover, there exists a constant  $C$ which is independent of $\theta$, $\lambda$ and $u$, such that if  $u\in  \Ucal\sts{ Q_{\Omega}, \eta }$ with $\eta<\tilde{\eta}_0$, then we have
    \begin{equation*}
      \norm{\eps_{\theta,\lambda}}_{H^1}+\abs{\theta}+\abs{\lambda}\leq C\eta.
    \end{equation*}

\end{lemm}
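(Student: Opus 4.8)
The plan is to deduce Lemma~\ref{lem:ift2} from the implicit function theorem, applied to the two orthogonality conditions viewed as equations in the parameters $(\theta,\lambda)$, together with a compactness/patching argument along the orbit $\Set{Q_{\Omega}\e^{\i\theta} | \theta\in\R}$ and a Lipschitz estimate for the quantitative part. Concretely, I would introduce $F\colon\R\times\R\times H^1(\R)\to\R^2$,
\begin{equation*}
  F(\theta,\lambda,u)
  =
  \Bigl(\,
    \braket{ u\e^{\i\theta}-\sts{Q_{\Omega}+\lambda\varphi_{\Omega}+\rho(\lambda)Q_{\Omega}},\ \i Q_{\Omega} }\,,\
    \braket{ u\e^{\i\theta}-\sts{Q_{\Omega}+\lambda\varphi_{\Omega}+\rho(\lambda)Q_{\Omega}},\ \varphi_{\Omega} }
  \,\Bigr),
\end{equation*}
with $\rho$ given by \eqref{func:rho}; since $\rho$ is polynomial and $(\theta,u)\mapsto u\e^{\i\theta}$ is smooth from $\R\times H^1$ into $H^1$, $F$ is $\Ccal^{1}$, and $F(0,0,Q_{\Omega})=0$. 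The key point is the $2\times2$ Jacobian of $(\theta,\lambda)\mapsto F(\theta,\lambda,Q_{\Omega})$ at $(0,0)$: from \eqref{func:rho} one has $\rho(0)=\rho'(0)=0$, hence $\partial_{\lambda}\bigl(Q_{\Omega}+\lambda\varphi_{\Omega}+\rho(\lambda)Q_{\Omega}\bigr)\big|_{\lambda=0}=\varphi_{\Omega}$ and $\partial_{\theta}\bigl(Q_{\Omega}\e^{\i\theta}\bigr)\big|_{\theta=0}=\i Q_{\Omega}$, while the off-diagonal entries $\braket{\i Q_{\Omega},\varphi_{\Omega}}$ and $\braket{\varphi_{\Omega},\i Q_{\Omega}}$ vanish because $Q_{\Omega}$ and $\varphi_{\Omega}$ are real-valued; the Jacobian is therefore the invertible diagonal matrix $\mathrm{diag}\bigl(\norm{Q_{\Omega}}_{2}^{2},\,-\norm{\varphi_{\Omega}}_{2}^{2}\bigr)$. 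The implicit function theorem then gives, on a small $H^1$-ball about $Q_{\Omega}$, a unique $\Ccal^1$ map $u\mapsto(\theta(u),\lambda(u))$ with $(\theta(Q_{\Omega}),\lambda(Q_{\Omega}))=(0,0)$ solving $F=0$, which is exactly the required orthogonality $\eps_{\theta,\lambda}\perp\i Q_{\Omega}$, $\eps_{\theta,\lambda}\perp\varphi_{\Omega}$.

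To pass from this local statement to the full tube I would exploit the phase symmetry $F(\theta+\theta_{1},\lambda,u)=F(\theta,\lambda,u\e^{\i\theta_{1}})$ and the compactness of the orbit $\Set{Q_{\Omega}\e^{\i\theta} | \theta\in\R}$: the Jacobian computation holds verbatim at each orbit point $Q_{\Omega}\e^{\i\theta_{1}}$, so covering the orbit by finitely many of the resulting $H^1$-balls produces a $\Ccal^1$ decomposition on a uniform tubular neighbourhood of the orbit, which contains $\Ucal(Q_{\Omega},\tilde\eta_{0})$ once $\tilde\eta_{0}$ is small. On overlaps the local solutions coincide modulo $2\pi$ in $\theta$ by the implicit-function uniqueness (applied after a phase shift), so $\lambda(\cdot)$ and $\theta(\cdot)$ (read in $\R/2\pi\Z$) are globally well defined, $\Ccal^1$ on the tube, and unique among maps with small values. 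I expect this globalisation to be the main obstacle: one must check that the phase indeterminacy is exactly $2\pi\Z$, that the local uniqueness persists under the gluing, and that the uniform neighbourhood of the compact orbit really swallows a whole tube; the remaining steps are routine.

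For the quantitative bound, fix $u\in\Ucal(Q_{\Omega},\eta)$ with $\eta<\tilde\eta_{0}$ and choose $\theta_{1}$ with $\norm{u\e^{\i\theta_{1}}-Q_{\Omega}}_{H^1}<\eta$. Since $(\theta,\lambda)(\cdot)$ is $\Ccal^1$ and $(\theta,\lambda,\eps_{\theta,\lambda})$ vanishes on the orbit, these quantities are Lipschitz in $u$ near the orbit, so $\abs{\theta-\theta_{1}}+\abs{\lambda}\le C\norm{u\e^{\i\theta_{1}}-Q_{\Omega}}_{H^1}$; plugging this into the definition of $\eps_{\theta,\lambda}$ together with $\abs{\rho(\lambda)}\le C\lambda^{2}$ and $\norm{u\e^{\i\theta}-u\e^{\i\theta_{1}}}_{H^1}\le\abs{\theta-\theta_{1}}\norm{u}_{H^1}$ gives $\norm{\eps_{\theta,\lambda}}_{H^1}\le C\norm{u\e^{\i\theta_{1}}-Q_{\Omega}}_{H^1}$. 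Taking the infimum over admissible $\theta_{1}$ and using the definition \eqref{tube} of the tube then yields $\norm{\eps_{\theta,\lambda}}_{H^1}+\abs{\theta}+\abs{\lambda}\le C\eta$, with $\theta$ read in a bounded fundamental domain anchored at the chosen representative, which is the assertion.
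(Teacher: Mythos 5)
Your proposal is correct and follows essentially the same route as the paper: the paper also applies the implicit function theorem to the two orthogonality functionals $F^1=\braket{\eps_{\theta,\lambda},\i Q_{\Omega}}$, $F^2=\braket{\eps_{\theta,\lambda},\varphi_{\Omega}}$ and verifies the same nondegenerate Jacobian $\det=-\norm{Q_{\Omega}}_{2}^{2}\norm{\varphi_{\Omega}}_{2}^{2}\neq 0$ at $(Q_{\Omega},0,0)$. The globalisation along the orbit and the Lipschitz bound that you spell out are exactly the details the paper omits by referring to the analogous Lemma 2.6 of \cite{MTX2018}.
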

\begin{proof}
It is also a consequence of the Implicit Function Theorem for the functional 
\begin{align*}
\textbf{F}(u; \theta, \lambda) = (F^1(u; \theta, \lambda) , F^2(u; \theta, \lambda) ),
\end{align*}
where 
\begin{align*}
F^1(u; \theta, \lambda)  = \; \Re \int    \eps_{\theta,\lambda} \; \overline{\i Q_{\Omega}} ,  \quad
F^2(u; \theta, \lambda)  = \;  \Re \int    \eps_{\theta,\lambda} \;  \varphi_{\Omega}.
\end{align*}
It suffices to verify the  non-degeneracy of the following Jacobian matrix:
\begin{equation*}
 \det \frac{\partial \textbf{F}}{\partial (\theta, \lambda) }\left(Q_{\Omega}, 0, 0\right) = \det
  \begin{pmatrix}
    \braket{\i Q_{\Omega},\i Q_{\Omega}} & \braket{\i Q_{\Omega},\varphi_{\Omega}}\\
    -\braket{\varphi_{\Omega},\i Q_{\Omega}} & -\braket{\varphi_{\Omega},\varphi_{\Omega}}
  \end{pmatrix}
  =
  -\norm{Q_{\Omega}}_{2}^{2}\norm{\varphi_{\Omega}}_{2}^{2} \not = 0.
\end{equation*}
We omit the details here. One can refer to \cite[Lemma 2.6]{MTX2018} for the analouge proof as the derivative NLS case.
\end{proof}
The next lemma shows that one can obtain the refined  estimate of  the remainder term $\eps_{\theta,\lambda}$
along the direction $Q_{\Omega}$ under the above refined modulational decomposition.
\begin{lemm}
\label{lem:epsQ}
There exist $0<\tilde{\eta}_1\ll 1$ and  $0<\tilde{\lambda}_1\ll 1$,
such that if $\abs{\lambda}\leq \tilde{\lambda}_1$ and any $\eps\in H^1(\R)$ with $\norm{\eps}_{H^1}\leq\tilde{\eta}_1$,
satisfy
\begin{equation*}
  \Mcal\sts{ Q_{\Omega}+\lambda\varphi_{\Omega}+\rho\sts{\lambda}Q_{\Omega} +\eps }=\Mcal\sts{ Q_{\Omega} },
\end{equation*}
where $\rho(\lambda)$ is define by \eqref{func:rho}, then we have
\begin{equation}
\label{epsQ}
  \abs{ \braket{ \eps,Q_{\Omega} } }
  \leq
  C\sts{ \norm{\eps}_{H^1}^2+\abs{\lambda}\norm{\eps}_{H^1} + \lambda^4 },
\end{equation}
where $C$ is a constant independent of $\lambda$ and $\eps$.
\end{lemm}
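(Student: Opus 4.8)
The plan is almost entirely algebraic. Write $w:=Q_{\Omega}+\lambda\varphi_{\Omega}+\rho\sts{\lambda}Q_{\Omega}=\sts{1+\rho\sts{\lambda}}Q_{\Omega}+\lambda\varphi_{\Omega}$, so that the hypothesis reads $\Mcal\sts{w+\eps}=\Mcal\sts{Q_{\Omega}}$. Since $\Mcal\sts{u}=\tfrac12\norm{u}_{2}^{2}$ and $\braket{\cdot,\cdot}$ is the real $L^2$ inner product, this identity expands to
\begin{equation*}
\braket{w,\eps}=\tfrac12\norm{Q_{\Omega}}_{2}^{2}-\tfrac12\norm{w}_{2}^{2}-\tfrac12\norm{\eps}_{2}^{2}.
\end{equation*}
So the whole problem reduces to (i) showing $\norm{w}_{2}^{2}-\norm{Q_{\Omega}}_{2}^{2}=\mathrm{O}\sts{\lambda^{4}}$, and (ii) extracting $\braket{Q_{\Omega},\eps}$ from $\braket{w,\eps}$.

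For (i), I would expand $\norm{w}_{2}^{2}=\sts{1+\rho\sts{\lambda}}^{2}\norm{Q_{\Omega}}_{2}^{2}+2\lambda\sts{1+\rho\sts{\lambda}}\braket{Q_{\Omega},\varphi_{\Omega}}+\lambda^{2}\norm{\varphi_{\Omega}}_{2}^{2}$ and use the degeneracy identity $\braket{Q_{\Omega},\varphi_{\Omega}}=0$ from \eqref{deg:Q} --- valid precisely because $\Omega$ solves \eqref{Omegaeq}, i.e. $\mathbf{d}^{\dprime}\sts{\Omega}=0$ --- to kill the cross term. Then the specific choice $\rho\sts{\lambda}=-\frac{\norm{\varphi_{\Omega}}_{2}^{2}}{2\norm{Q_{\Omega}}_{2}^{2}}\lambda^{2}$ in \eqref{func:rho} makes the $\mathrm{O}\sts{\lambda^{2}}$ terms cancel exactly, leaving $\norm{w}_{2}^{2}-\norm{Q_{\Omega}}_{2}^{2}=\frac{\norm{\varphi_{\Omega}}_{2}^{4}}{4\norm{Q_{\Omega}}_{2}^{2}}\lambda^{4}$. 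Plugging this back into the displayed identity gives $\abs{\braket{w,\eps}}\leq C\sts{\lambda^{4}+\norm{\eps}_{H^1}^{2}}$.

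For (ii), I would write $\braket{w,\eps}=\sts{1+\rho\sts{\lambda}}\braket{Q_{\Omega},\eps}+\lambda\braket{\varphi_{\Omega},\eps}$, solve for $\braket{Q_{\Omega},\eps}$, bound the mixed term by Cauchy--Schwarz as $\abs{\lambda}\,\abs{\braket{\varphi_{\Omega},\eps}}\leq\abs{\lambda}\,\norm{\varphi_{\Omega}}_{2}\norm{\eps}_{H^1}$, and absorb the prefactor $\sts{1+\rho\sts{\lambda}}^{-1}$, which is at most $2$ once $\tilde{\lambda}_{1}$ is chosen so small that $\abs{\rho\sts{\lambda}}\leq\tfrac12$ for $\abs{\lambda}\leq\tilde{\lambda}_{1}$. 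Combining (i) and (ii) yields $\abs{\braket{\eps,Q_{\Omega}}}\leq C\sts{\norm{\eps}_{H^1}^{2}+\abs{\lambda}\norm{\eps}_{H^1}+\lambda^{4}}$, as claimed. There is no genuine obstacle here; the only conceptual point --- and the reason $\rho$ is defined the way it is --- is that without the combination of the degeneracy $\braket{Q_{\Omega},\varphi_{\Omega}}=0$ and the quadratic correction $\rho\sts{\lambda}$, the defect $\norm{w}_{2}^{2}-\norm{Q_{\Omega}}_{2}^{2}$ would only be $\mathrm{O}\sts{\lambda^{2}}$, which is too large for the claimed quartic bound.
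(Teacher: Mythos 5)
Your proposal is correct and follows essentially the same route as the paper: expand the mass identity, use the degeneracy $\braket{Q_{\Omega},\varphi_{\Omega}}=0$ together with the choice of $\rho\sts{\lambda}$ in \eqref{func:rho} to cancel the $\mathrm{O}\sts{\lambda^{2}}$ terms, and then handle the remaining cross term by Cauchy--Schwarz while absorbing the $\rho\sts{\lambda}\braket{\eps,Q_{\Omega}}$ contribution for $\abs{\lambda}$ small. The only difference is cosmetic: you divide by the prefactor $1+\rho\sts{\lambda}$ whereas the paper moves the term $\rho\sts{\lambda}\braket{\eps,Q_{\Omega}}$ to the other side.
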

\begin{proof}
By $
  \Mcal\sts{ Q_{\Omega}+\lambda\varphi_{\Omega}+\rho\sts{\lambda}Q_{\Omega} +\eps }=\Mcal\sts{ Q_{\Omega} },
$ we have 
\begin{align}
  \notag
  0 =& \Mcal\sts{ Q_{\Omega}+\lambda\varphi_{\Omega}+\rho\sts{\lambda}Q_{\Omega} +\eps }-\Mcal\sts{ Q_{\Omega} }
  \\
  =& \braket{ Q_{\Omega} , \lambda\varphi_{\Omega}+\rho\sts{\lambda}Q_{\Omega} +\eps }
    +
    \frac{1}{2}\braket{ \lambda\varphi_{\Omega}+\rho\sts{\lambda}Q_{\Omega} +\eps, \lambda\varphi_{\Omega}+\rho\sts{\lambda}Q_{\Omega} +\eps }.
\end{align}
which together with \eqref{deg:Q} and \eqref{func:rho}  implies that
\begin{align}
\label{eq002}
\notag
  \braket{ \eps, Q_{\Omega} }
  =
  &
  -\lambda \braket{ \varphi_{\Omega} , \eps }
  -\frac{1}{2}\braket{ \rho\sts{\lambda}Q_{\Omega} +\eps , \rho\sts{\lambda}Q_{\Omega} +\eps }
  \\
  =&
  -\lambda \braket{ \varphi_{\Omega} , \eps }
  -\frac{\rho\sts{\lambda}^2}{2}\braket{ Q_{\Omega} , Q_{\Omega} }
  -\frac{1}{2}\braket{ \eps , \eps }
  -\rho\sts{\lambda}\braket{ \eps , Q_{\Omega} }.
\end{align}
By \eqref{func:rho} and the Cauchy-Schwarz inequality,  we can obtain the result.
%
\end{proof}
The following two lemmas show that the landscape  of the action functional $\Scal_{\Omega}(u)$  along the  unstable direction $\lambda \varphi_{\Omega} + \rho\sts{\lambda}Q_{\Omega} $ around the solitary wave $Q_{\Omega}$ is definite. Firstly we have
\begin{lemm}
\label{lem:expand1}
  There exist $0<\tilde{\lambda}_2\ll 1$, such that
if $ 0<\abs{\lambda}<\tilde{\lambda}_2 $,  we have
\begin{equation}
\label{eq:expand1}
  \Scal_{\Omega}\sts{ Q_{\Omega}+\lambda \varphi_{\Omega} + \rho\sts{\lambda}Q_{\Omega} }
  =
  \Scal_{\Omega}\sts{ Q_{\Omega} } + \frac{1}{6}\mathbf{d}^{\tprime}\sts{\Omega}\cdot \lambda^3 + \so{ \abs{\lambda}^3 },
\end{equation}
where $\rho(\lambda)$ is define by \eqref{func:rho}.
\end{lemm}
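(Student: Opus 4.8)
The plan is to Taylor-expand the $\Ccal^{3}$ functional $\Scal_{\Omega}$ (recall $p>5$) about $Q_{\Omega}$ to third order along the curve $\lambda\mapsto Q_{\Omega}+h_{\lambda}$, where $h_{\lambda}:=\lambda\varphi_{\Omega}+\rho\sts{\lambda}Q_{\Omega}$, and to keep track of which contributions are genuinely of order $\lambda^{3}$. Since $\rho\sts{\lambda}=-\frac{\norm{\varphi_{\Omega}}_{2}^{2}}{2\norm{Q_{\Omega}}_{2}^{2}}\lambda^{2}$ by \eqref{func:rho}, we have $\norm{h_{\lambda}}_{H^{1}}\leq C\abs{\lambda}$ for $\abs{\lambda}$ small; hence the Taylor formula with Peano remainder for $\Ccal^{3}$ maps gives
\begin{align*}
\Scal_{\Omega}\sts{Q_{\Omega}+h_{\lambda}}
={}&\Scal_{\Omega}\sts{Q_{\Omega}}
+\braket{\Scal^{\prime}_{\Omega}\sts{Q_{\Omega}},h_{\lambda}}
+\tfrac12\Scal^{\dprime}_{\Omega}\sts{Q_{\Omega}}\sts{h_{\lambda},h_{\lambda}}\\
&+\tfrac16\Scal^{\tprime}_{\Omega}\sts{Q_{\Omega}}\sts{h_{\lambda},h_{\lambda},h_{\lambda}}
+\so{\abs{\lambda}^{3}}.
\end{align*}
The linear term vanishes since $\Scal^{\prime}_{\Omega}\sts{Q_{\Omega}}=0$ by the variational characterization of $Q_{\Omega}$ (\Cref{vcsltn}).

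For the quadratic term, expand $\Scal^{\dprime}_{\Omega}\sts{Q_{\Omega}}\sts{h_{\lambda},h_{\lambda}}$ bilinearly into the $\lambda^{2}$-, $2\lambda\rho\sts{\lambda}$- and $\rho\sts{\lambda}^{2}$-pieces. By \Cref{phi2Q} (i.e. \eqref{eq:phi2Q}) one has $\Scal^{\dprime}_{\Omega}\sts{Q_{\Omega}}\sts{\varphi_{\Omega},\varphi_{\Omega}}=-\braket{Q_{\Omega},\varphi_{\Omega}}$, which is $0$ in the degenerate case by \eqref{deg:Q}, and $\Scal^{\dprime}_{\Omega}\sts{Q_{\Omega}}\sts{\varphi_{\Omega},Q_{\Omega}}=-\norm{Q_{\Omega}}_{2}^{2}$; the $\rho\sts{\lambda}^{2}$-piece is $\so{\abs{\lambda}^{3}}$ because $\rho\sts{\lambda}$ is of order $\lambda^{2}$. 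Thus only the cross term survives at order $\lambda^{3}$, and using \eqref{func:rho} it equals $2\lambda\rho\sts{\lambda}\bigl(-\norm{Q_{\Omega}}_{2}^{2}\bigr)=\norm{\varphi_{\Omega}}_{2}^{2}\lambda^{3}$, so the quadratic term contributes $\tfrac12\norm{\varphi_{\Omega}}_{2}^{2}\lambda^{3}+\so{\abs{\lambda}^{3}}$. For the cubic term, the boundedness of the trilinear form $\Scal^{\tprime}_{\Omega}\sts{Q_{\Omega}}$ on $H^{1}$ (from \eqref{F3rdd}, using $Q_{\Omega}\in L^{\infty}$ and $H^{1}\sts{\R}\hookrightarrow L^{\infty}\sts{\R}$) lets us replace $h_{\lambda}$ by $\lambda\varphi_{\Omega}$ at the cost of an error of size $\lambda^{4}$, giving $\tfrac16\Scal^{\tprime}_{\Omega}\sts{Q_{\Omega}}\sts{\varphi_{\Omega},\varphi_{\Omega},\varphi_{\Omega}}\lambda^{3}+\so{\abs{\lambda}^{3}}$. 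Adding the two surviving contributions and invoking \eqref{eq:d3rdd}, namely $\mathbf{d}^{\tprime}\sts{\Omega}=\Scal^{\tprime}_{\Omega}\sts{Q_{\Omega}}\sts{\varphi_{\Omega},\varphi_{\Omega},\varphi_{\Omega}}+3\braket{\varphi_{\Omega},\varphi_{\Omega}}$, yields exactly $\tfrac16\mathbf{d}^{\tprime}\sts{\Omega}\lambda^{3}+\so{\abs{\lambda}^{3}}$, which is \eqref{eq:expand1}.

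The computation is essentially bookkeeping, so there is no serious obstacle; the one point that needs a moment's care — and the one that explains why the third-order coefficient is $\mathbf{d}^{\tprime}\sts{\Omega}$ rather than merely $\Scal^{\tprime}_{\Omega}\sts{Q_{\Omega}}\sts{\varphi_{\Omega},\varphi_{\Omega},\varphi_{\Omega}}$ — is that the quadratic term $\tfrac12\Scal^{\dprime}_{\Omega}\sts{Q_{\Omega}}\sts{h_{\lambda},h_{\lambda}}$ is \emph{not} negligible to order $\lambda^{3}$: its $\lambda^{2}$-leading part vanishes precisely because $\mathbf{d}^{\dprime}\sts{\Omega}=0$, so the $\lambda^{3}$ cross term between the first-order profile $\lambda\varphi_{\Omega}$ and the second-order correction $\rho\sts{\lambda}Q_{\Omega}$ dictated by \Cref{lem:ift} survives and supplies the $3\braket{\varphi_{\Omega},\varphi_{\Omega}}$ summand of $\mathbf{d}^{\tprime}\sts{\Omega}$. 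Every discarded term is of size $\lambda^{4}$ or smaller and is absorbed into $\so{\abs{\lambda}^{3}}$.
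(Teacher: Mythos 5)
Your proposal is correct and follows essentially the same route as the paper: a third-order Taylor expansion of $\Scal_{\Omega}$ at $Q_{\Omega}$, with the quadratic term reduced via \Cref{phi2Q} and \eqref{deg:Q} to the cross term $2\lambda\rho\sts{\lambda}\Scal^{\dprime}_{\Omega}\sts{Q_{\Omega}}\sts{\varphi_{\Omega},Q_{\Omega}}=\norm{\varphi_{\Omega}}_{2}^{2}\lambda^{3}$, the cubic term reduced to $\lambda^{3}\Scal^{\tprime}_{\Omega}\sts{Q_{\Omega}}\sts{\varphi_{\Omega},\varphi_{\Omega},\varphi_{\Omega}}$, and the two recombined through \eqref{eq:d3rdd}. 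Your closing remark correctly identifies the key point, namely that the quadratic term survives at order $\lambda^{3}$ exactly because $\mathbf{d}^{\dprime}\sts{\Omega}=0$ kills its $\lambda^{2}$ part.
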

\begin{proof}
By the definition \eqref{func:rho} of $\rho(\lambda)$, $ \lambda \varphi_{\Omega} + \rho\sts{\lambda}Q_{\Omega} $
is sufficiently small in $H^1(\R)$. Therefore, by taking the Taylor series expression of $\Scal_{\Omega}$ at $Q_{\Omega}$, we have
\begin{align}
\label{eq003}
  &
  \Scal_{\Omega}\sts{ Q_{\Omega}+\lambda \varphi_{\Omega} + \rho\sts{\lambda}Q_{\Omega} }
  \\ \notag
  =
  &\; 
  \Scal_{\Omega}\sts{ Q_{\Omega} }
  +
  \frac{1}{2}\Scal_{\Omega}^{\dprime}\sts{ Q_{\Omega} }
  \sts{
    \lambda \varphi_{\Omega} + \rho\sts{\lambda}Q_{\Omega} ,
    \lambda \varphi_{\Omega} + \rho\sts{\lambda}Q_{\Omega}
  }
  \\
\notag
  &
  +
  \frac{1}{6}\Scal_{\Omega}^{\tprime}\sts{ Q_{\Omega} }
  \sts{
    \lambda \varphi_{\Omega} + \rho\sts{\lambda}Q_{\Omega} ,
    \lambda \varphi_{\Omega} + \rho\sts{\lambda}Q_{\Omega} ,
    \lambda \varphi_{\Omega} + \rho\sts{\lambda}Q_{\Omega}
  }
  \\
  \notag
  &
  +
  \so{ \norm{\lambda \varphi_{\Omega} + \rho\sts{\lambda}Q_{\Omega}}_{H^1}^3 }.
\end{align}

Firstly, it follows from  \eqref{F2ndd},  \eqref{eq:phi2Q},  \eqref{deg:Q} and \eqref{func:rho} that
\begin{align}
\label{eq004}
\frac{1}{2} \Scal_{\Omega}^{\dprime}\sts{ Q_{\Omega} }
  \sts{
    \lambda \varphi_{\Omega} + \rho\sts{\lambda}Q_{\Omega} ,
    \lambda \varphi_{\Omega} + \rho\sts{\lambda}Q_{\Omega}
  }
=
\frac{1}{2}
\lambda^3 \braket{ \varphi_{\Omega} , \varphi_{\Omega} }
+\so{ \abs{\lambda}^3 }.
\end{align}

Secondly, by  \eqref{func:rho} again, one can  get
\begin{align}
  \label{eq005}
  \Scal_{\Omega}^{\tprime}\sts{ Q_{\Omega} }
  \sts{
    \lambda \varphi_{\Omega} + \rho\sts{\lambda}Q_{\Omega} ,
    \lambda \varphi_{\Omega} + \rho\sts{\lambda}Q_{\Omega} ,
    \lambda \varphi_{\Omega} + \rho\sts{\lambda}Q_{\Omega}
  }
  =
  \lambda^3
  \Scal_{\Omega}^{\tprime}\sts{ Q_{\Omega} }
  \sts{
    \varphi_{\Omega},
    \varphi_{\Omega},
    \varphi_{\Omega}
  }
  +\so{\abs{\lambda}^3},
\end{align}
which together with \eqref{eq:d3rdd}, \eqref{eq003},  \eqref{eq004}   implies the result.
\end{proof}

Secondly, we have
\begin{lemm}
\label{lem:expand2}
There exist  $0<\tilde{\lambda}_3\ll 1$ and  $0<\tilde{\eta}_3\ll 1$ such that
if $\lambda$ satisfies $ 0<\abs{\lambda}<\tilde{\lambda}_3 $ and $\eps \in H^1(\R)$ with $ \norm{\eps}_{H^1}\leq \tilde{\eta}_3 $ satisfies
\begin{equation}
  \label{epsQ2}
    \abs{ \braket{ \eps,Q_{\Omega} } }
    \leq
    C\sts{ \norm{\eps}_{H^1}^2+\abs{\lambda}\norm{\eps}_{H^1} + \lambda^4 },
\end{equation}
where $C$ is a constant independent of $\lambda$ and $\eps$.
then we have
\begin{equation}
\label{eq:expand2}
  \Scal_{\Omega}\sts{ Q_{\Omega}+\lambda \varphi_{\Omega} + \rho\sts{\lambda}Q_{\Omega}+\eps }
  =
  \Scal_{\Omega}\sts{ Q_{\Omega} }+ \Scal_{\Omega}^{\dprime}\sts{ Q_{\Omega} }\sts{\eps,\eps} + \frac{1}{6}\mathbf{d}^{\tprime}\sts{\Omega}\cdot \lambda^3
  +
  \so{ \abs{\lambda}^3 +\norm{\eps}_{H^1}^2 },
\end{equation}
where $\rho(\lambda)$ is define by \eqref{func:rho}.
\end{lemm}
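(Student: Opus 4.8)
The plan is to write the argument of $\Scal_{\Omega}$ as $Q_{\Omega}+h+\eps$ with $h:=\lambda\varphi_{\Omega}+\rho\sts{\lambda}Q_{\Omega}$, to isolate the cubic‑in‑$\lambda$ behaviour along $h$ by invoking \Cref{lem:expand1}, and to Taylor‑expand $\Scal_{\Omega}$ only to second order in $\eps$ about the base point $Q_{\Omega}+h$. By \eqref{func:rho} one has $\norm{h}_{H^1}=\mathrm{O}\sts{\abs{\lambda}}$, so $Q_{\Omega}+h$ stays in a fixed bounded $H^1$‑ball around $Q_{\Omega}$; since $\Scal_{\Omega}\in\Ccal^3$ and its only non‑quadratic part is $\int\abs{u}^{p+1}$, the pointwise bound \eqref{fexp2} together with $H^1\sts{\R}\hookrightarrow L^\infty\cap L^q$ gives
\[
  \Scal_{\Omega}\sts{Q_{\Omega}+h+\eps}
  =
  \Scal_{\Omega}\sts{Q_{\Omega}+h}
  +
  \Scal_{\Omega}^{\prime}\sts{Q_{\Omega}+h}\sts{\eps}
  +
  \tfrac{1}{2}\Scal_{\Omega}^{\dprime}\sts{Q_{\Omega}+h}\sts{\eps,\eps}
  +
  \so{\norm{\eps}_{H^1}^2},
\]
the remainder being controlled by $\norm{\eps}_{L^3}^3+\norm{\eps}_{L^{p+1}}^{p+1}\lesssim\norm{\eps}_{H^1}^3+\norm{\eps}_{H^1}^{p+1}$. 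The first term on the right equals $\Scal_{\Omega}\sts{Q_{\Omega}}+\tfrac{1}{6}\mathbf{d}^{\tprime}\sts{\Omega}\lambda^3+\so{\abs{\lambda}^3}$ by \Cref{lem:expand1}. For the third term, $\Ccal^3$‑regularity (only the nonlinear part of $\Scal_{\Omega}$ contributes, its quadratic part having constant second derivative) yields $\norm{\Scal_{\Omega}^{\dprime}\sts{Q_{\Omega}+h}-\Scal_{\Omega}^{\dprime}\sts{Q_{\Omega}}}\lesssim\norm{h}_{H^1}\lesssim\abs{\lambda}$ as bilinear forms on $H^1\sts{\R}$, hence $\tfrac{1}{2}\Scal_{\Omega}^{\dprime}\sts{Q_{\Omega}+h}\sts{\eps,\eps}=\tfrac{1}{2}\Scal_{\Omega}^{\dprime}\sts{Q_{\Omega}}\sts{\eps,\eps}+\so{\norm{\eps}_{H^1}^2}$, which is the quadratic‑in‑$\eps$ contribution in \eqref{eq:expand2}.

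The only genuinely non‑routine point is the cross term $\Scal_{\Omega}^{\prime}\sts{Q_{\Omega}+h}\sts{\eps}$, which a priori has only the size $\norm{h}_{H^1}\norm{\eps}_{H^1}\sim\abs{\lambda}\norm{\eps}_{H^1}$, and $\abs{\lambda}\norm{\eps}_{H^1}$ is \emph{not} $\so{\abs{\lambda}^3+\norm{\eps}_{H^1}^2}$ in general (take $\norm{\eps}_{H^1}\sim\abs{\lambda}^{3/2}$). The cancellation needed here comes from the degenerate algebra together with the mass‑level hypothesis \eqref{epsQ2}. Using $\Scal_{\Omega}^{\prime}\sts{Q_{\Omega}}\sts{\eps}=0$ and a first‑order Taylor expansion of $\xi\mapsto\Scal_{\Omega}^{\prime}\sts{\xi}\sts{\eps}$ about $Q_{\Omega}$ (with $\Ccal^3$ remainder control), one has $\Scal_{\Omega}^{\prime}\sts{Q_{\Omega}+h}\sts{\eps}=\Scal_{\Omega}^{\dprime}\sts{Q_{\Omega}}\sts{h,\eps}+\mathrm{O}\sts{\norm{h}_{H^1}^2\norm{\eps}_{H^1}}=\Scal_{\Omega}^{\dprime}\sts{Q_{\Omega}}\sts{h,\eps}+\mathrm{O}\sts{\lambda^2\norm{\eps}_{H^1}}$. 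Next, $\Scal_{\Omega}^{\dprime}\sts{Q_{\Omega}}\sts{h,\eps}=\lambda\,\Scal_{\Omega}^{\dprime}\sts{Q_{\Omega}}\sts{\varphi_{\Omega},\eps}+\rho\sts{\lambda}\,\Scal_{\Omega}^{\dprime}\sts{Q_{\Omega}}\sts{Q_{\Omega},\eps}$; by \eqref{eq:phi2Q} the first summand equals $-\lambda\braket{Q_{\Omega},\eps}$, so by \eqref{epsQ2} it is bounded by $C\abs{\lambda}\sts{\norm{\eps}_{H^1}^2+\abs{\lambda}\norm{\eps}_{H^1}+\lambda^4}$, while the second summand is $\mathrm{O}\sts{\abs{\rho\sts{\lambda}}\norm{\eps}_{H^1}}=\mathrm{O}\sts{\lambda^2\norm{\eps}_{H^1}}$ since $\abs{\rho\sts{\lambda}}=\mathrm{O}\sts{\lambda^2}$ by \eqref{func:rho}. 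Altogether the cross term is dominated by a sum of terms of type $\abs{\lambda}\norm{\eps}_{H^1}^2$, $\lambda^2\norm{\eps}_{H^1}$ and $\abs{\lambda}^5$, and each of these is $\so{\abs{\lambda}^3+\norm{\eps}_{H^1}^2}$ once $\tilde{\lambda}_3,\tilde{\eta}_3$ are taken small enough: the first since $\abs{\lambda}\to0$, the last since $\abs{\lambda}^5=\so{\abs{\lambda}^3}$, and the middle one by Young's inequality $\lambda^2\norm{\eps}_{H^1}\le\tfrac{1}{2}a\,\lambda^4+\tfrac{1}{2a}\norm{\eps}_{H^1}^2$ on choosing $a$ large and then $\abs{\lambda}$ small. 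Hence $\Scal_{\Omega}^{\prime}\sts{Q_{\Omega}+h}\sts{\eps}=\so{\abs{\lambda}^3+\norm{\eps}_{H^1}^2}$.

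Substituting the three expansions and collecting terms yields \eqref{eq:expand2}. Thus the proof is essentially Taylor bookkeeping, and the entire difficulty is concentrated in the cross term: this is exactly the step where hypothesis \eqref{epsQ2} — that is, the fact that $\eps$ lies on the mass level set $\Mcal\sts{Q_{\Omega}}$, which was the content of \Cref{lem:epsQ} — is indispensable, since otherwise the first‑order correction coming from the unstable direction $\varphi_{\Omega}$ would dominate the quadratic form $\Scal_{\Omega}^{\dprime}\sts{Q_{\Omega}}\sts{\eps,\eps}$. I note in passing that the orthogonality conditions $\eps\perp\i Q_{\Omega}$ and $\eps\perp\varphi_{\Omega}$ play no direct role in this lemma; they enter only through \Cref{lem:epsQ} (to produce \eqref{epsQ2}) and, later, through the coercivity of $\Scal_{\Omega}^{\dprime}\sts{Q_{\Omega}}$.
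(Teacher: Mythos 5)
Your proof is correct and, at its core, is the same argument as the paper's: everything reduces to the identity $\Scal_{\Omega}^{\dprime}\sts{Q_{\Omega}}\sts{\varphi_{\Omega},\eps}=-\braket{Q_{\Omega},\eps}$ from \Cref{phi2Q} combined with the hypothesis \eqref{epsQ2} to kill the dangerous cross term of size $\abs{\lambda}\norm{\eps}_{H^1}$, exactly as in \eqref{eq009}. The only organizational difference is that you Taylor-expand in two stages (first along $h=\lambda\varphi_{\Omega}+\rho\sts{\lambda}Q_{\Omega}$, reusing \Cref{lem:expand1}, then in $\eps$ about the shifted base point $Q_{\Omega}+h$), whereas the paper expands once about $Q_{\Omega}$ in the full increment $h+\eps$; this buys nothing essential but is a clean way to recycle \Cref{lem:expand1}. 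One remark: your expansion produces the coefficient $\tfrac12$ in front of $\Scal_{\Omega}^{\dprime}\sts{Q_{\Omega}}\sts{\eps,\eps}$ — as does the paper's own computation \eqref{eq009} — whereas the displayed statement \eqref{eq:expand2} carries coefficient $1$; this is a typo in the statement (harmless downstream, since only the sign and coercivity of the quadratic form are used), so do not claim your $\tfrac12\Scal_{\Omega}^{\dprime}\sts{Q_{\Omega}}\sts{\eps,\eps}$ matches \eqref{eq:expand2} verbatim.
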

\begin{proof}
By the Taylor series  expansion of $\Scal_{\Omega}$ at $Q_{\Omega}$ and the fact that $\Scal_{\Omega}'\sts{ Q_{\Omega}} =0 $, we have
\begin{align}
\label{eq007}
&
  \Scal_{\Omega}\sts{ Q_{\Omega}+\lambda \varphi_{\Omega} + \rho\sts{\lambda}Q_{\Omega}+\eps }
  -
  \Scal_{\Omega}\sts{ Q_{\Omega} }
\\ 
\notag
=
&\;
\frac{1}{2}\Scal_{\Omega}^{\dprime}\sts{ Q_{\Omega} }
\sts{
  \lambda \varphi_{\Omega} + \rho\sts{\lambda}Q_{\Omega} +\eps,
  \lambda \varphi_{\Omega} + \rho\sts{\lambda}Q_{\Omega}+\eps
}
\\
\notag
&
+
\frac{1}{6}\Scal_{\Omega}^{\tprime}\sts{ Q_{\Omega} }
\sts{
  \lambda \varphi_{\Omega} + \rho\sts{\lambda}Q_{\Omega}+\eps ,
  \lambda \varphi_{\Omega} + \rho\sts{\lambda}Q_{\Omega}+\eps ,
  \lambda \varphi_{\Omega} + \rho\sts{\lambda}Q_{\Omega}+\eps
}
\\
\notag
&
+
\so{ \norm{\lambda \varphi_{\Omega} + \rho\sts{\lambda}Q_{\Omega}+\eps}_{H^1}^3 }.
\end{align}

Firstly, it follows from \eqref{eq:phi2Q} \eqref{func:rho}, \eqref{eq004}  and \eqref{epsQ2} that,
\begin{align}
\notag
 &  \frac{1}{2}\Scal_{\Omega}^{\dprime}\sts{ Q_{\Omega} }
  \sts{
  	\lambda \varphi_{\Omega} + \rho\sts{\lambda}Q_{\Omega} +\eps,
  	\lambda \varphi_{\Omega} + \rho\sts{\lambda}Q_{\Omega}+\eps
  } \\
=& \notag
  \frac{1}{2}
  \lambda^3 \braket{ \varphi_{\Omega} , \varphi_{\Omega} }
  +
  \frac{1}{2}
  \Scal_{\Omega}^{\dprime}\sts{ Q_{\Omega} }
  \sts{
    \eps,
    \eps
  }
  +
  \Scal_{\Omega}^{\dprime}\sts{ Q_{\Omega} }
  \sts{
    \lambda \varphi_{\Omega} + \rho\sts{\lambda}Q_{\Omega},
    \eps
  }
  +\so{ \abs{\lambda}^3 }
\\
\notag
=&
\frac{1}{2}
\lambda^3 \braket{ \varphi_{\Omega} , \varphi_{\Omega} }
+
\frac{1}{2}
\Scal_{\Omega}^{\dprime}\sts{ Q_{\Omega} }
\sts{
  \eps,
  \eps
}
-
\lambda\braket{ Q_{\Omega} , \eps }
+
\bo{ \lambda^2\norm{\eps}_{H^1} }
+\so{ \abs{\lambda}^3 }
\\
\notag
=&
\frac{1}{2}
\lambda^3 \braket{ \varphi_{\Omega} , \varphi_{\Omega} }
+
\frac{1}{2}
\Scal_{\Omega}^{\dprime}\sts{ Q_{\Omega} }
\sts{
  \eps,
  \eps
}
+\bo{ \abs{\lambda}\norm{\eps}_{H^1}^2+{\lambda}^2\norm{\eps}_{H^1} + \abs{\lambda}^5 }
+\so{ \abs{\lambda}^3 }
\\
\label{eq009}
=&
\frac{1}{2}
\lambda^3 \braket{ \varphi_{\Omega} , \varphi_{\Omega} }
+
\frac{1}{2}
\Scal_{\Omega}^{\dprime}\sts{ Q_{\Omega} }
\sts{
  \eps,
  \eps
}
+
\so{ \abs{\lambda}^3 +\norm{\eps}_{H^1}^2 }
\end{align}
where we used the Cauchy-Schwarz inequality in the last identity.

Secondly, by \eqref{eq005} and \eqref{func:rho}, we get
\begin{align}
  \notag
& \frac{1}{6}\Scal_{\Omega}^{\tprime}\sts{ Q_{\Omega} }
\sts{
	\lambda \varphi_{\Omega} + \rho\sts{\lambda}Q_{\Omega}+\eps ,
	\lambda \varphi_{\Omega} + \rho\sts{\lambda}Q_{\Omega}+\eps ,
	\lambda \varphi_{\Omega} + \rho\sts{\lambda}Q_{\Omega}+\eps
}\\
  =
  & \notag
  \frac{\lambda^3}{6}
  \Scal_{\Omega}^{\tprime}\sts{ Q_{\Omega} }
  \sts{
    \varphi_{\Omega},
    \varphi_{\Omega},
    \varphi_{\Omega}
  }
  +\so{\abs{\lambda}^3}
  +\bo{ \lambda^2\norm{\eps}_{H^1}+\norm{\eps}_{H^1}^3 }
  \\
  =
  &
  \frac{\lambda^3}{6}
  \Scal_{\Omega}^{\tprime}\sts{ Q_{\Omega} }
  \sts{
    \varphi_{\Omega},
    \varphi_{\Omega},
    \varphi_{\Omega}
  }
  +\so{\abs{\lambda}^{3}+\norm{\eps}_{H^1}^2}
\label{eq010}
\end{align}

Lastly, by  \eqref{eq007}, \eqref{eq009}, \eqref{eq010} and \eqref{eq:d3rdd},  we can obtain the result.
\end{proof}

\subsection{Properties of the linearized operator $ \Scal_{\Omega}^{\dprime}\sts{ Q_{\Omega} } $}\label{SubS:LO}
As shown in Lemma \ref{lem:expand2}, we now turn to estimate the quadratic term $ \Scal_{\Omega}^{\dprime}\sts{ Q_{\Omega} }\sts{\eps,\eps} $, which in fact  has some coercivity property under the condition that the remainder term $\eps$ has some geometric orhtogonal structures. It is the task in this subsection and related to the spectral properties of the linearized operators $\Scal_{\Omega}^{\dprime}\sts{ Q_{\Omega} } $.

To do so, we firstly introduce the following result.
\begin{lemm}
  \label{lem:chiRphi}
    Let $\chi$ be the $L^2$-normalized function of $N$  defined by \eqref{H1docom}, and $\varphi_{\Omega}$ be defined by \eqref{phi}.
    Then we have
  \begin{equation}
  \label{chiRphi}
    \braket{\chi,\varphi_{\Omega}}\neq 0.
  \end{equation}
  \end{lemm}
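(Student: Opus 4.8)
\textbf{Proof proposal for Lemma \ref{lem:chiRphi}.}

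The plan is to argue by contradiction: suppose $\braket{\chi,\varphi_\Omega}=0$ and combine this with the orthogonal decomposition $H^1 = N\oplus K\oplus P$ from \Cref{prop:spectrum} and the identity $\Scal_\Omega''(Q_\Omega)\varphi_\Omega = -Q_\Omega$ from \Cref{phi2Q}. First I would decompose $\varphi_\Omega = a\chi + b\,(\i Q_\Omega) + \varphi_P$ along the three subspaces $N$, $K=\mathrm{span}\{\i Q_\Omega\}$ and $P$. Since $\varphi_\Omega = \partial_\omega Q_\omega|_{\omega=\Omega}$ is real-valued while $\i Q_\Omega$ is purely imaginary, the $K$-component vanishes: $b=0$. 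Under the contradiction hypothesis $\braket{\chi,\varphi_\Omega}=0$, the $N$-component also vanishes: $a = 0$. Hence $\varphi_\Omega = \varphi_P \in P$, so by the coercivity on $P$ we would get $\Scal_\Omega''(Q_\Omega)(\varphi_\Omega,\varphi_\Omega)\geq c\|\varphi_\Omega\|_{H^1}^2 > 0$ (note $\varphi_\Omega\neq 0$ since $Q_\omega$ genuinely depends on $\omega$).

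On the other hand, pairing the identity $\Scal_\Omega''(Q_\Omega)\varphi_\Omega = -Q_\Omega$ with $\varphi_\Omega$ itself gives
\begin{equation*}
\Scal_\Omega''(Q_\Omega)(\varphi_\Omega,\varphi_\Omega) = -\braket{Q_\Omega,\varphi_\Omega} = 0,
\end{equation*}
where the last equality is exactly the degeneracy relation \eqref{deg:Q}, i.e. $\mathbf{d}''(\Omega)=0$. This contradicts the strict positivity obtained above, and the contradiction proves $\braket{\chi,\varphi_\Omega}\neq 0$.

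The main point to be careful about — and the only genuine obstacle — is verifying that $\varphi_\Omega\notin K$, equivalently that $\varphi_\Omega$ is not a multiple of $\i Q_\Omega$, and that $\varphi_\Omega\neq 0$; both follow immediately from the fact that $\varphi_\Omega$ is a nonzero real-valued function (one sees from the explicit formula \eqref{Q} that $\partial_\omega Q_\omega$ does not vanish identically), so the $K$-component in the decomposition is automatically zero because $K$ consists of purely imaginary functions. The rest is the bookkeeping of the spectral decomposition together with the two algebraic identities $\Scal_\Omega''(Q_\Omega)\varphi_\Omega=-Q_\Omega$ and $\braket{Q_\Omega,\varphi_\Omega}=0$ already established in the excerpt. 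One should also record, for later use, that this argument in fact shows $\varphi_\Omega$ has a nontrivial component both in $N$ and in $P$, since if $\varphi_\Omega$ had no $P$-component it would lie in $N$ and then $\Scal_\Omega''(Q_\Omega)(\varphi_\Omega,\varphi_\Omega) = -\mu^2\|\varphi_\Omega\|_2^2 < 0 = -\braket{Q_\Omega,\varphi_\Omega}$, again a contradiction.
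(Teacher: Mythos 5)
Your proposal is correct and follows essentially the same route as the paper: assume $\braket{\chi,\varphi_\Omega}=0$, note $\braket{\varphi_\Omega,\i Q_\Omega}=0$ because $\varphi_\Omega$ is real, conclude $\varphi_\Omega\in P$ so that coercivity forces $\Scal_\Omega''(Q_\Omega)(\varphi_\Omega,\varphi_\Omega)>0$, and contradict this with $\Scal_\Omega''(Q_\Omega)(\varphi_\Omega,\varphi_\Omega)=-\braket{Q_\Omega,\varphi_\Omega}=0$ from Lemma \ref{phi2Q} and \eqref{deg:Q}. The extra bookkeeping you include (the explicit three-component decomposition and the remark that $\varphi_\Omega$ must have nontrivial components in both $N$ and $P$) is consistent with, and slightly more detailed than, the paper's argument.
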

  \begin{proof}
    We argue by contradiction, and assume that
  \begin{equation}
  \label{eq-chiRphi-001}
    \braket{\chi,\varphi_{\Omega}}= 0.
  \end{equation}
  Since $\varphi_{\Omega}$ is real, it is easy to see that
    \begin{equation}
      \label{eq-chiRphi-002}
      \braket{ \varphi_{\Omega},\i Q_{\Omega} }=0.
    \end{equation}
  On the one hand, by \eqref{eq-chiRphi-001}, \eqref{eq-chiRphi-002} and \Cref{prop:spectrum}, we have
  \begin{equation}\label{Est:Quad Pos}
    \Scal_{\Omega}^{\dprime}\sts{Q_{\Omega}}\sts{ \varphi_{\Omega} , \varphi_{\Omega} } \geq c \norm{\phi_{\Omega}}_{H^1}^2>0.
  \end{equation}
  On the other hand,  by \eqref{eq:phi2Q} and \eqref{deg:Q}, we get
  \begin{equation*}
    \Scal_{\Omega}^{\dprime}\sts{Q_{\Omega}}\sts{ \varphi_{\Omega} , \varphi_{\Omega} } = -\braket{Q_{\Omega}, \phi_{\Omega}}=0,
  \end{equation*}
  which is in contradiction with \eqref{Est:Quad Pos}. Therefore, \eqref{chiRphi} holds, and this completes the proof.
  \end{proof}
After the above lemma, one can now show  the following coercive property of $\Scal_{\Omega}^{\dprime}\sts{Q_{\Omega}}$ by the standard arguments in \cite{GSS1987JFA1}  \cite{W1985SJMA}, 
which is a consequence of \Cref{prop:spectrum}.
\begin{lemm}
\label{lem:coer}
  Let $\eps\in H^1\sts{\R}\setminus\Set{0}$. If
  \begin{equation}
  \label{orth1}
    \braket{ \eps,\i Q_{\Omega } }=0, \quad \braket{ \eps, \varphi_{\Omega} }=0 \quad\text{ and }\quad \braket{ \eps, Q_{\Omega} }=0,
  \end{equation}
  then there exists a positive constant $\kappa_1$ independent of $\eps$, such that the following result holds,
  \begin{equation}
  \label{eq:coerh1}
    \Scal_{\Omega}^{\dprime}\sts{Q_{\Omega}}\sts{ \eps,\eps }\geq \kappa_1 \norm{ \eps }_{H^1}^2.
  \end{equation}
\end{lemm}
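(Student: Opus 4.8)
The plan is to deduce the coercivity on the codimension-three subspace cut out by \eqref{orth1} from the spectral decomposition $H^1 = N\oplus K\oplus P$ of \Cref{prop:spectrum}, using \Cref{lem:chiRphi} in an essential way. First I would write $\eps = a\chi + b\,\i Q_{\Omega} + w$ with $w\in P$, where $\chi$ is the $L^2$-normalized generator of the negative eigenspace $N$ (eigenvalue $-\mu^2$) and $a=\braket{\eps,\chi}$, $b\|Q_{\Omega}\|_2^2 = \braket{\eps,\i Q_{\Omega}}$. Since $\Scal_{\Omega}^{\dprime}(Q_{\Omega})$ annihilates $K$, is $-\mu^2$ on $N$, and is $\geq c\|\cdot\|_{H^1}^2$ on $P$, and since the three summands are mutually orthogonal for the quadratic form as well (this is the content of the direct-sum decomposition being spectral), one gets
\begin{equation*}
\Scal_{\Omega}^{\dprime}(Q_{\Omega})(\eps,\eps) = -\mu^2 a^2 + \Scal_{\Omega}^{\dprime}(Q_{\Omega})(w,w) \geq -\mu^2 a^2 + c\|w\|_{H^1}^2.
\end{equation*}
The orthogonality $\braket{\eps,\i Q_{\Omega}}=0$ kills the $b$-term outright, so the whole game is to control the bad term $-\mu^2 a^2$ by the good term $c\|w\|_{H^1}^2$, using the two remaining constraints $\braket{\eps,\varphi_{\Omega}}=0$ and $\braket{\eps,Q_{\Omega}}=0$.

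The key algebraic input is the identity $\Scal_{\Omega}^{\dprime}(Q_{\Omega})\varphi_{\Omega} = -Q_{\Omega}$ from \Cref{phi2Q} together with $\mathbf{d}''(\Omega)=0$, i.e. $\braket{\varphi_{\Omega},Q_{\Omega}}=0$ from \eqref{deg:Q}. Pairing the constraint $0 = \braket{\eps,Q_{\Omega}} = -\braket{\eps, \Scal_{\Omega}^{\dprime}(Q_{\Omega})\varphi_{\Omega}} = -\Scal_{\Omega}^{\dprime}(Q_{\Omega})(\eps,\varphi_{\Omega})$ and expanding $\varphi_{\Omega}$ in the decomposition (note $\varphi_{\Omega}\perp \i Q_{\Omega}$ since $\varphi_{\Omega}$ is real, so $\varphi_{\Omega} = \braket{\chi,\varphi_{\Omega}}\chi + \varphi_\perp$ with $\varphi_\perp\in P$) yields a relation of the form
\begin{equation*}
0 = -\mu^2\, a\, \braket{\chi,\varphi_{\Omega}} + \Scal_{\Omega}^{\dprime}(Q_{\Omega})(w,\varphi_\perp),
\end{equation*}
whence, since $\braket{\chi,\varphi_{\Omega}}\neq 0$ by \Cref{lem:chiRphi},
\begin{equation*}
|a| \leq \frac{1}{\mu^2|\braket{\chi,\varphi_{\Omega}}|}\,\big|\Scal_{\Omega}^{\dprime}(Q_{\Omega})(w,\varphi_\perp)\big| \leq C\|w\|_{H^1},
\end{equation*}
by continuity of the bounded bilinear form $\Scal_{\Omega}^{\dprime}(Q_{\Omega})$ on $H^1$. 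Plugging this back in gives $\Scal_{\Omega}^{\dprime}(Q_{\Omega})(\eps,\eps) \geq (c - \mu^2 C^2)\|w\|_{H^1}^2$; if $c-\mu^2C^2>0$ we are done after also bounding $\|\eps\|_{H^1}^2 \lesssim a^2 + b^2 + \|w\|_{H^1}^2 \lesssim \|w\|_{H^1}^2$, while if the constant is not manifestly positive one runs the standard two-term argument: either $\|w\|_{H^1}$ dominates $|a|$ (handled as above) or not, in which case a compactness/contradiction argument — suppose a minimizing sequence $\eps_n$ with $\|\eps_n\|_{H^1}=1$ and $\Scal_{\Omega}^{\dprime}(Q_{\Omega})(\eps_n,\eps_n)\to m\le 0$, extract a weak limit, use the three orthogonality conditions plus \Cref{lem:chiRphi} to show the limit must vanish, contradicting lower semicontinuity of the quadratic part against the compact $L^{p-1}Q_\Omega^{p-1}$ and delta terms — closes it.

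The main obstacle is making the quadratic form genuinely "diagonal" across $N\oplus K\oplus P$: one must check that the spectral decomposition in \Cref{prop:spectrum} is orthogonal not merely in $L^2$ but also with respect to the form $\Scal_{\Omega}^{\dprime}(Q_{\Omega})$, i.e. that cross terms like $\Scal_{\Omega}^{\dprime}(Q_{\Omega})(\chi,w)$ vanish for $w\in P$; this follows from self-adjointness of the associated operator and the invariance of the spectral subspaces, but it is the point that must be argued carefully. Everything else is bookkeeping: the identity $\Scal_{\Omega}^{\dprime}(Q_{\Omega})\varphi_{\Omega}=-Q_{\Omega}$ converts the awkward constraint $\braket{\eps,Q_{\Omega}}=0$ into a statement about the form, \Cref{lem:chiRphi} guarantees the constraint actually sees the negative direction $\chi$, and boundedness of the form on $H^1$ turns the resulting linear relation into the quantitative bound $|a|\lesssim\|w\|_{H^1}$ that defeats the single negative direction.
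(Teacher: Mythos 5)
Your main quantitative argument does not close, and the reason is that the constraint $\braket{\eps,\varphi_{\Omega}}=0$ never actually enters it. The two constraints you do use ($\braket{\eps,\i Q_{\Omega}}=0$, and $\braket{\eps,Q_{\Omega}}=0$ rewritten as $\Scal_{\Omega}^{\dprime}\sts{Q_{\Omega}}\sts{\eps,\varphi_{\Omega}}=0$) cut out a subspace on which the quadratic form is only \emph{nonnegative}, not coercive: $\varphi_{\Omega}$ itself lies in that subspace (it is real and $\braket{\varphi_{\Omega},Q_{\Omega}}=0$ by \eqref{deg:Q}) and satisfies $\Scal_{\Omega}^{\dprime}\sts{Q_{\Omega}}\sts{\varphi_{\Omega},\varphi_{\Omega}}=-\braket{Q_{\Omega},\varphi_{\Omega}}=0$. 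Concretely, writing $\varphi_{\Omega}=\alpha\chi+\varphi_{\perp}$ with $\alpha=\braket{\chi,\varphi_{\Omega}}\neq 0$ and $\varphi_{\perp}\in P$, the vanishing of $\Scal_{\Omega}^{\dprime}\sts{Q_{\Omega}}\sts{\varphi_{\Omega},\varphi_{\Omega}}$ forces $\Scal_{\Omega}^{\dprime}\sts{Q_{\Omega}}\sts{\varphi_{\perp},\varphi_{\perp}}=\mu^{2}\alpha^{2}$, and your relation $a=\Scal_{\Omega}^{\dprime}\sts{Q_{\Omega}}\sts{w,\varphi_{\perp}}/(\mu^{2}\alpha)$ combined with Cauchy--Schwarz for the positive form on $P$ yields exactly $\mu^{2}a^{2}\leq \Scal_{\Omega}^{\dprime}\sts{Q_{\Omega}}\sts{w,w}$, i.e. only $\Scal_{\Omega}^{\dprime}\sts{Q_{\Omega}}\sts{\eps,\eps}\geq 0$, with equality along $\eps=t\varphi_{\Omega}$. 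Hence the constant $c-\mu^{2}C^{2}$ in your bound can never be made positive by this route, the first branch of your dichotomy is vacuous, and the compactness fallback must carry the whole proof; but its crux --- showing that a nonzero limit satisfying all three orthogonality conditions has strictly positive quadratic form, equivalently that the null space of the nonnegative restricted form is $\mathrm{span}\{\varphi_{\Omega}\}$ --- is precisely the content you leave unspecified, and you never indicate where $\braket{\eps,\varphi_{\Omega}}=0$ is brought to bear.

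The paper closes this by decomposing along $\varphi_{\Omega}$ rather than along $\chi$: it writes $\eps=\mathtt{p}_{\eps}+a_{\eps}\varphi_{\Omega}$ with $a_{\eps}=-\braket{\eps,\chi}/\braket{\varphi_{\Omega},\chi}$ (legitimate by Lemma \ref{lem:chiRphi}), so that $\mathtt{p}_{\eps}\perp\chi$ and $\mathtt{p}_{\eps}\perp\i Q_{\Omega}$, hence $\mathtt{p}_{\eps}\in P$. The identity $\Scal_{\Omega}^{\dprime}\sts{Q_{\Omega}}\varphi_{\Omega}=-Q_{\Omega}$ together with $\braket{\eps,Q_{\Omega}}=\braket{\varphi_{\Omega},Q_{\Omega}}=0$ kills all terms involving $\varphi_{\Omega}$, so $\Scal_{\Omega}^{\dprime}\sts{Q_{\Omega}}\sts{\eps,\eps}=\Scal_{\Omega}^{\dprime}\sts{Q_{\Omega}}\sts{\mathtt{p}_{\eps},\mathtt{p}_{\eps}}\geq c\norm{\mathtt{p}_{\eps}}_{2}^{2}$, and the third constraint is then used exactly where it is needed: $\norm{\mathtt{p}_{\eps}}_{2}^{2}=\norm{\eps}_{2}^{2}+a_{\eps}^{2}\norm{\varphi_{\Omega}}_{2}^{2}\geq\norm{\eps}_{2}^{2}$ because $\braket{\eps,\varphi_{\Omega}}=0$. (The paper also supplies the separate elementary step upgrading $L^{2}$-coercivity to $H^{1}$-coercivity via \eqref{sharp-embed}.) To rescue your version you would have to insert $\braket{\eps,\varphi_{\Omega}}=0$ quantitatively, e.g. by further splitting $w$ into its $\varphi_{\perp}$-component and a remainder, to keep $\eps$ uniformly away from the null direction $\varphi_{\Omega}$.
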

\begin{proof}
It suffices  to show that there exists a positive constant $\kappa_2$ independent of $\eps$, such that the following estimate holds,
\begin{equation}
\label{eq:orthl2}
  \Scal_{\Omega}^{\dprime}\sts{Q_{\Omega}}\sts{ \eps,\eps }\geq \kappa_2 \norm{ \eps }_{2}^2.
\end{equation}
In fact, assume that \eqref{eq:orthl2} holds, it follows from \eqref{linS} and $ \norm{Q_{\Omega}}_{\infty}<+\infty $ that
\begin{align}
\label{eq013}
  \norm{ \eps^{\prime} }_{2}^2+\Omega \norm{\eps}_{2}^{2}-\gamma\abs{ \eps\sts{0} }^2
  \leq
  \Scal_{\Omega}^{\dprime}\sts{Q_{\Omega}}\sts{{\eps},{\eps} }
  + C\norm{\eps}_{2}^{2},
\end{align}
where $  C $ is a positive constant which only depends on $\norm{Q_{\Omega}}_{\infty}$.
Using \eqref{sharp-embed}, the Cauchy-Schwarz inequality and the fact $4\Omega>\gamma^2$, we have
\begin{align}
\notag
\textup{LHS of }\eqref{eq013}
=
& \norm{ \eps^{\prime} }_{2}^2+\Omega \norm{\eps}_{2}^{2}
  -\gamma\abs{ \eps\sts{0} }^2
\\
\notag
\geq
& \norm{ \eps^{\prime} }_{2}^2+\Omega \norm{\eps}_{2}^{2}-\frac{1}{2}\norm{ \eps^{\prime} }_{2}^2-\frac{\gamma^2}{2}\norm{ \eps }_{2}^2
\\
\label{eq014}
\geq
& \frac{1}{2}\norm{ \eps^{\prime} }_{2}^2 -\frac{\gamma^2}{2}\norm{ \eps }_{2}^2.
\end{align}
Therefore, inserting \eqref{eq:orthl2} and \eqref{eq014} into \eqref{eq013}, one immediately get
\begin{align*}
  \frac{1}{2}\norm{ \eps^{\prime} }_{2}^2
  \leq &
  \frac{\gamma^2}{2}\norm{ \eps }_{2}^2 +
  \Scal_{\Omega}^{\dprime}\sts{Q_{\Omega}}\sts{{\eps},{\eps} }
  + C \norm{\eps}_{2}^{2}
  \\
  \leq &
  \sts{1+ \frac{ \gamma^2+2C }{ 2\kappa_2} }
  \Scal_{\Omega}^{\dprime}\sts{Q_{\Omega}}\sts{{\eps},{\eps} }
\end{align*}
By taking $\kappa_1 = \frac{ \kappa_2 }{ 2\kappa_2  +\gamma^2+2C + 1}$, we can obtain \eqref{eq:coerh1}.

Now let $\chi$ be the  $L^2$-normalized function in $N$.
Firstly, for any nonzero $\eps$ satisfying \eqref{orth1},  one can take the following decomposition by \eqref{chiRphi}:
  \begin{equation*}
    \eps = \mathtt{p}_{\eps} + a_{\eps}\varphi_{\Omega}, \quad  a_{\eps} = -\frac{ \braket{\eps,\chi} }{ \braket{ \varphi_{\Omega},\chi } }.
  \end{equation*}

On the one hand, a direct calculation implies that
\begin{equation*}
  \braket{ \mathtt{p}_{\eps},\chi }=0,\quad\text{and}\quad \braket{ \mathtt{p}_{\eps},\i Q_{\Omega}}=0,
\end{equation*}
which means that $ \mathtt{p}_{\eps} \in P$, where $P$ is defined by \Cref{prop:spectrum}, therefore we have
\begin{align}
  \notag
  \Scal_{\Omega}^{\dprime}\sts{Q_{\Omega}}\sts{ \mathtt{p}_{\eps},\mathtt{p}_{\eps} }
  \geq& c \norm{ \mathtt{p}_{\eps} }_{2}^{2}
\\
\notag
  =&
  c \braket{ \eps-a_{\eps}\varphi_{\Omega} , \eps-a_{\eps}\varphi_{\Omega} }
  \\
\notag
  =&
  c \norm{\eps}_{2}^{2}+c \sts{a_{\eps}}^2\norm{\varphi_{\Omega}}_{2}^{2}
  \\
\label{eq011}
  \geq& c \norm{\eps}_{2}^{2}.
\end{align}

On the other hand, by \eqref{eq:phi2Q}, \eqref{deg:Q} and \eqref{orth1}, we have
\begin{align}
  \Scal_{\Omega}^{\dprime}\sts{Q_{\Omega}}\sts{{\eps},{\eps} }
  =
  \Scal_{\Omega}^{\dprime}\sts{Q_{\Omega}}\sts{ \mathtt{p}_{\eps}+a_{\eps}\varphi_{\Omega},\mathtt{p}_{\eps}+a_{\eps}\varphi_{\Omega} }
  =
  \Scal_{\Omega}^{\dprime}\sts{Q_{\Omega}}\sts{ \mathtt{p}_{\eps} ,\mathtt{p}_{\eps} }.
  \label{eq012}
\end{align}
Combining \eqref{eq011} and \eqref{eq012}, we can obtain
\begin{equation*}
  \Scal_{\Omega}^{\dprime}\sts{Q_{\Omega}}\sts{{\eps},{\eps} }\geq c \norm{\eps}_{2}^{2}.
\end{equation*}
This completes the proof of \eqref{eq:orthl2} with $\kappa_2=c$ and the proof of \Cref{lem:coer}.
\end{proof}

As a consequence of \Cref{lem:coer}, we have
\begin{coro}
    \label{coro:coer}
      Let $\eps\in H^1\sts{\R}\setminus\Set{0}$ and satisfy 
      \begin{equation}
      \label{orth2}
        \braket{ \eps,\i Q_{\Omega } }=0 \quad\text{ and }\quad \braket{ \eps, \varphi_{\Omega} }=0,
      \end{equation}
      then there exists a positive constant $\kappa$ independent of $\eps$, such that the following estimate holds,
      \begin{equation}
      \label{eq:coerh2}
        \Scal_{\Omega}^{\dprime}\sts{Q_{\Omega}}\sts{ \eps,\eps }\geq {\kappa} \norm{ \eps }_{H^1}^2-\frac{1}{\kappa}\braket{ \eps, Q_{\Omega} }^2.
      \end{equation}
\end{coro}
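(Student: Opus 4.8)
The plan is to deduce \Cref{coro:coer} from \Cref{lem:coer} by removing the $Q_{\Omega}$-component of $\eps$. Set
\[
c=\frac{\braket{\eps,Q_{\Omega}}}{\norm{Q_{\Omega}}_2^2},\qquad \tilde\eps=\eps-c\,Q_{\Omega},
\]
so that $\braket{\tilde\eps,Q_{\Omega}}=0$ by construction. The point is that the two hypotheses \eqref{orth2} survive this correction: since $Q_{\Omega}$ is real we have $\braket{Q_{\Omega},\i Q_{\Omega}}=0$, and since $\Omega$ is the critical frequency, \eqref{deg:Q} (i.e.\ the degeneracy $\mathbf{d}''(\Omega)=0$) gives $\braket{Q_{\Omega},\varphi_{\Omega}}=0$; hence $\braket{\tilde\eps,\i Q_{\Omega}}=\braket{\tilde\eps,\varphi_{\Omega}}=0$ as well. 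Thus $\tilde\eps$ satisfies all three orthogonality relations in \eqref{orth1}, and \Cref{lem:coer} yields
\[
\Scal_{\Omega}^{\dprime}\sts{Q_{\Omega}}\sts{\tilde\eps,\tilde\eps}\ \geq\ \kappa_1\,\norm{\tilde\eps}_{H^1}^2
\]
(the inequality being trivially true also when $\tilde\eps=0$).

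Next I would expand the quadratic form along $\eps=\tilde\eps+c\,Q_{\Omega}$,
\[
\Scal_{\Omega}^{\dprime}\sts{Q_{\Omega}}\sts{\eps,\eps}
=\Scal_{\Omega}^{\dprime}\sts{Q_{\Omega}}\sts{\tilde\eps,\tilde\eps}
+2c\,\Scal_{\Omega}^{\dprime}\sts{Q_{\Omega}}\sts{\tilde\eps,Q_{\Omega}}
+c^2\,\Scal_{\Omega}^{\dprime}\sts{Q_{\Omega}}\sts{Q_{\Omega},Q_{\Omega}}.
\]
Because $p>5$ and $Q_{\Omega}\in H^1(\R)$, the bilinear form $\Scal_{\Omega}^{\dprime}\sts{Q_{\Omega}}$ given by \eqref{linS} is bounded on $H^1(\R)\times H^1(\R)$ (the delta term is controlled by \eqref{sharp-embed}); hence the cross term is $\bo{\abs{c}\,\norm{\tilde\eps}_{H^1}}$ and the last term is $\bo{c^2}$, while $\abs{c}=\norm{Q_{\Omega}}_2^{-2}\abs{\braket{\eps,Q_{\Omega}}}$. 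Absorbing the cross term with Young's inequality, $2\abs{c}\,\bigl|\Scal_{\Omega}^{\dprime}\sts{Q_{\Omega}}\sts{\tilde\eps,Q_{\Omega}}\bigr|\le \tfrac{\kappa_1}{2}\norm{\tilde\eps}_{H^1}^2+C_1\,c^2$, I obtain
\[
\Scal_{\Omega}^{\dprime}\sts{Q_{\Omega}}\sts{\eps,\eps}\ \geq\ \tfrac{\kappa_1}{2}\,\norm{\tilde\eps}_{H^1}^2-C_2\,\braket{\eps,Q_{\Omega}}^2 ,
\]
with $C_1,C_2$ independent of $\eps$.

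Finally I would pass back from $\tilde\eps$ to $\eps$: from $\eps=\tilde\eps+c\,Q_{\Omega}$ and $\abs{c}\le C\abs{\braket{\eps,Q_{\Omega}}}$ the triangle inequality gives $\norm{\tilde\eps}_{H^1}^2\ge\tfrac12\norm{\eps}_{H^1}^2-C_3\braket{\eps,Q_{\Omega}}^2$, so that
\[
\Scal_{\Omega}^{\dprime}\sts{Q_{\Omega}}\sts{\eps,\eps}\ \geq\ \tfrac{\kappa_1}{4}\,\norm{\eps}_{H^1}^2-C_4\,\braket{\eps,Q_{\Omega}}^2,
\]
and choosing $\kappa=\min\{\kappa_1/4,\,1/C_4\}$ yields \eqref{eq:coerh2}. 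I do not expect a genuine obstacle here: the argument is soft linear algebra together with the boundedness and coercivity of $\Scal_{\Omega}^{\dprime}\sts{Q_{\Omega}}$ already established. The only step that must not be glossed over is the verification that $\tilde\eps$ is still orthogonal to $\varphi_{\Omega}$, which relies precisely on the degeneracy $\mathbf{d}''(\Omega)=0$ through \eqref{deg:Q} — this is exactly what makes the $Q_{\Omega}$-projection compatible with invoking \Cref{lem:coer}.
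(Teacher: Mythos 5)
Your argument is correct: the projection $\tilde\eps=\eps-c\,Q_{\Omega}$ lands in the subspace covered by \Cref{lem:coer} precisely because $\braket{Q_{\Omega},\i Q_{\Omega}}=0$ and, via \eqref{deg:Q}, $\braket{Q_{\Omega},\varphi_{\Omega}}=0$ at the critical frequency, and the rest is the usual absorption of the cross terms by boundedness of the bilinear form. This is exactly the standard argument the paper itself does not write out but delegates to \cite[p.~186]{MR2005AM}, so your proposal coincides with the intended proof.
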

\begin{proof}
  The proof  is standard, we can refer the analouge proof as  the nonlinear Schr\"{o}dinger equation in \cite[page 186]{MR2005AM}.  
\end{proof}

Combining Lemma \ref{lem:epsQ} and Corollary \ref{coro:coer}, we have
\begin{lemm}
\label{lem:restcoer}There exist $0<\tilde{\eta}_4\ll 1$ and $0<\tilde{\lambda}_4\ll 1$ such that if $\eps\in H^1\sts{\R}$
with $\norm{\eps}_{H^1}\leq\tilde{\eta}_4 $,
and $\lambda$ with
$\abs{ {\lambda}}\leq \tilde{\lambda}_4 $ satisfy
\begin{equation*}
  \action{\eps}{\textup{i}{Q_{\Omega}}}=0 \quad
  \text{~~and~~}
  \action{\eps}{\phi_{\Omega}}=0,
\end{equation*}and
\begin{equation*}
\Mcal\sts{Q_{\Omega} + {\lambda}\phi_{\Omega}
        +
        \rho({ {\lambda}})Q_{\Omega}+\eps
    }=\Mcal\sts{Q_{\Omega}},
\end{equation*}
where $\rho(\lambda)$ is determined  by \eqref{func:rho}, then we have
\begin{equation*}
  {\Scal}_{\Omega}^{\dprime}\sts{ Q_{\Omega} }\sts{\eps,\eps}\geq \frac{\kappa}{2}\norm{\eps}_{H^1}^2
  +
  \so{ {\lambda}^4}.
\end{equation*}
\end{lemm}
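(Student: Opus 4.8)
The plan is to obtain \Cref{lem:restcoer} by feeding the mass-constraint estimate of \Cref{lem:epsQ} into the conditional coercivity inequality of \Cref{coro:coer}. Since $\eps$ satisfies the two orthogonality relations $\braket{\eps,\i Q_\Omega}=0$ and $\braket{\eps,\varphi_\Omega}=0$, \Cref{coro:coer} applies directly (for $\eps\neq 0$; the case $\eps=0$ being trivial) and gives $\Scal_\Omega^{\dprime}(Q_\Omega)(\eps,\eps)\geq\kappa\norm{\eps}_{H^1}^2-\tfrac1\kappa\braket{\eps,Q_\Omega}^2$. Thus the entire task reduces to showing that the negative term $\tfrac1\kappa\braket{\eps,Q_\Omega}^2$ is bounded by $\tfrac\kappa2\norm{\eps}_{H^1}^2+\so{\lambda^4}$, and this is precisely what the mass constraint delivers through \Cref{lem:epsQ}.

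Concretely, I would first shrink the thresholds so that $\tilde{\eta}_4\leq\tilde{\eta}_1$ and $\tilde{\lambda}_4\leq\tilde{\lambda}_1$, which makes \Cref{lem:epsQ} applicable under the stated hypotheses and yields $\abs{\braket{\eps,Q_\Omega}}\leq C(\norm{\eps}_{H^1}^2+\abs\lambda\norm{\eps}_{H^1}+\lambda^4)$. Squaring and using $(a+b+c)^2\leq 3(a^2+b^2+c^2)$ produces exactly three terms: $\norm{\eps}_{H^1}^4$, $\lambda^2\norm{\eps}_{H^1}^2$, and $\lambda^8$. The first two are quadratic in $\eps$ with a coefficient that is as small as we like — bounded by $\tilde{\eta}_4^2$ and $\tilde{\lambda}_4^2$ respectively, since $\norm{\eps}_{H^1}\leq\tilde{\eta}_4$ and $\abs\lambda\leq\tilde{\lambda}_4$ — so they are absorbed into $\kappa\norm{\eps}_{H^1}^2$ once $\tilde{\eta}_4,\tilde{\lambda}_4$ are chosen small enough that $\tfrac{3C^2}{\kappa}(\tilde{\eta}_4^2+\tilde{\lambda}_4^2)\leq\tfrac\kappa2$; the remaining term $\tfrac{3C^2}{\kappa}\lambda^8$ is $\so{\lambda^4}$ as $\lambda\to 0$. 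Combining with the coercivity inequality gives $\Scal_\Omega^{\dprime}(Q_\Omega)(\eps,\eps)\geq\kappa\norm{\eps}_{H^1}^2-\tfrac\kappa2\norm{\eps}_{H^1}^2-\tfrac{3C^2}{\kappa}\lambda^8=\tfrac\kappa2\norm{\eps}_{H^1}^2+\so{\lambda^4}$, which is the claim.

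There is no serious obstacle here; it is essentially a bookkeeping lemma, and the only point that must be watched is that squaring the bound from \Cref{lem:epsQ} does \emph{not} generate a stand-alone term of order $\abs\lambda$ or $\lambda^2$, which would overwhelm the claimed $\so{\lambda^4}$ remainder. It does not, precisely because the estimate of \Cref{lem:epsQ} contains no term linear in $\lambda$ alone — a cancellation that is itself a reflection of the degeneracy $\braket{\varphi_\Omega,Q_\Omega}=0$, i.e.\ $\mathbf{d}^{\dprime}(\Omega)=0$. Every term surviving in the bound for $\braket{\eps,Q_\Omega}$ is either genuinely quadratic in $\eps$ (hence absorbable once the tube radius and $\abs\lambda$ are small) or of size $\lambda^4$ (hence squares to $\so{\lambda^4}$), and that is what makes the estimate close.
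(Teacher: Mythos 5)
Your proposal is correct and follows essentially the same route as the paper: apply \Cref{lem:epsQ} to bound $\braket{\eps,Q_\Omega}$, square it to get the three terms $\lambda^2\norm{\eps}_{H^1}^2$, $\norm{\eps}_{H^1}^4$, $\lambda^8$, absorb the first two into the coercive term by smallness of $\tilde{\eta}_4,\tilde{\lambda}_4$, and feed the result into \Cref{coro:coer}. No discrepancies to report.
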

\begin{proof}
First, by \Cref{lem:epsQ}, we have
\begin{align}
\notag
\action{\eps}{Q_{\Omega}}^2
=
&
C^2\bsts{
    {
        \abs{ {\lambda}}\norm{\eps}_{H^1}
        +
        \norm{\eps}_{H^1}^2
        +
        {\lambda}^4
    }
  }^2
\\
\leq
&
3C^2\sts{
    \lambda^2\norm{\eps}_{H^1}^2
    +
    \norm{\eps}_{H^1}^4
    +
    {\lambda}^8
}.
\label{tp11}
\end{align}
It implies by taking $\abs{\lambda}$ and $\norm{\eps}_{H^1}$ sufficiently small  that
\begin{equation}
\label{tp1}
  \action{\eps}{Q_{\Omega}}^2=\so{\norm{\eps}_{H^1}^2}
  +
  \so{\lambda^4},
\end{equation}
which together with \eqref{eq:coerh2} implies that
  \begin{align*}
    {\Scal^{\dprime}_{\Omega}}\sts{Q_{\Omega}}\sts{\eps,\eps}
    \geq
    &
     \kappa\norm{\eps}_{H^1}^2
      -
      \frac{1}{\kappa}\action{\eps}{Q_{\Omega}}^2
    \\
   =
    &
    \kappa\norm{\eps}_{H^1}^2
    +
    \so{\norm{\eps}_{H^1}^2}
    +
    \so{\lambda^4}
    \\
    \geq
    &
    \frac{\kappa}{2}\norm{\eps}_{H^1}^2
    +
    \so{ {\lambda}^4}.
  \end{align*}
This completes the proof.
\end{proof}

\section{The $\eps$-variable equation and the dynamics of the parameters}\label{sect:eps-dyn}
In this section,  we derive the equation obeyed by the remainder term
  \begin{equation}    \label{epst}
\eps\sts{t,x}
=
u\sts{t,x}\e^{\i\theta(t)}-
\left({ Q_{\Omega}\sts{x}+\lambda\sts{t}\varphi_{\Omega}\sts{x}+\rho\sts{\lambda\sts{t}}Q_{\Omega}\sts{x} }\right),
\end{equation}
where $u$ is the solution of \eqref{dnls} in $H^1(\R)$, $\varphi_{\Omega}$ and $\rho(\lambda)$ are determined by \eqref{phi} and \eqref{func:rho} respectively,
$\lambda$ and $\theta$ are two $\Ccal^1$ functions with respect to $t$.

Firstly, we have 
\begin{lemm}
\label{lem:epst}
  Let $u(t) \in C\left([0, T), H^1(\R)\right)$ be the  solution to \eqref{dnls} for some $T>0$, and $\eps(t,x)$ be defined by \eqref{epst}, then we have
 
  \begin{align}
  \notag
    \i\eps_{t}
    =&
    -\i\lambda_{t}\sts{ \varphi_{\Omega}+\frac{\d\rho}{\d\lambda}\sts{\lambda} Q_{\Omega} }
    -\sts{ \theta_{t} + \Omega }\sts{ Q_{\Omega}+\lambda\varphi_{\Omega}+\rho\sts{\lambda}Q+\eps }
    \\
    \notag
    &
    + \Lcal\sts{ \lambda\varphi_{\Omega}+\rho\sts{\lambda}Q_{\Omega}+\eps }
    -\frac{1}{2}f^{\dprime}\sts{Q_{\Omega}}\sts{ \lambda\varphi_{\Omega}
    +\rho\sts{\lambda}Q_{\Omega}+\eps , \lambda\varphi_{\Omega}+\rho\sts{\lambda}Q_{\Omega}+\eps }
    \\
    \label{eq:epst}
    &
    -\Rcal\sts{ Q_{\Omega}, \lambda\varphi_{\Omega}+\rho\sts{\lambda}Q_{\Omega}+\eps},
  \end{align}
where $f^{\prime}\sts{Q_{\Omega}}$, 
$f^{\dprime}\sts{Q_{\Omega}}$ is defined by \eqref{f1d}, \eqref{f2d},
the linearized operator $\Lcal$ and the higher order remainder term $\Rcal$ are defined by
\begin{equation}
\label{op:L}
  \Lcal{ g } = \Scal^{\dprime}_{\Omega}\sts{Q_{\Omega}} g
  =
  -g_{xx}+\Omega g-\gamma\delta g -f^{\prime}\sts{Q_{\Omega}}g,
\end{equation}
and
\begin{equation}
\label{eq:hot}
  \Rcal\sts{ Q_{\Omega},g }
  =
  f\sts{Q_{\Omega}+g}-f\sts{Q_{\Omega}}-f^{\prime}\sts{Q_{\Omega}}g
  -\frac{1}{2}f^{\dprime}\sts{Q_{\Omega}}\sts{g, g} .
\end{equation}
\end{lemm}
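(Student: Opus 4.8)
The plan is a direct differentiation of \eqref{epst} combined with the equation \eqref{dnls}. Set
\[
  \Phi_{\lambda}:= Q_{\Omega}+\lambda\varphi_{\Omega}+\rho(\lambda)Q_{\Omega},
  \qquad
  g:=\lambda\varphi_{\Omega}+\rho(\lambda)Q_{\Omega}+\eps,
\]
so that \eqref{epst} reads $u\e^{\i\theta}=\Phi_{\lambda}+\eps=Q_{\Omega}+g$. First I would differentiate this identity in $t$. On the one hand, $\i\partial_t(u\e^{\i\theta})=\i\partial_t\Phi_{\lambda}+\i\eps_t=\i\lambda_t\big(\varphi_{\Omega}+\tfrac{\d\rho}{\d\lambda}(\lambda)Q_{\Omega}\big)+\i\eps_t$, which isolates $\i\eps_t$. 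On the other hand, since the gauge factor $\e^{\i\theta(t)}$ is constant in $x$, it commutes with $\partial_x^2$, with $\gamma\delta$, and with the gauge--covariant nonlinearity $f$; hence \eqref{dnls} (with $\mu=1$) gives $\i u_t\e^{\i\theta}=-(u\e^{\i\theta})_{xx}-\gamma\delta(u\e^{\i\theta})-f(u\e^{\i\theta})$, and therefore
\[
  \i\partial_t(u\e^{\i\theta})=-(u\e^{\i\theta})_{xx}-\gamma\delta(u\e^{\i\theta})-f(u\e^{\i\theta})-\theta_t(u\e^{\i\theta}).
\]

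Next I would expand this around $Q_{\Omega}$ using $u\e^{\i\theta}=Q_{\Omega}+g$. The profile equation \eqref{eqQ}, written as $-\partial_x^2 Q_{\Omega}-\gamma\delta Q_{\Omega}=-\Omega Q_{\Omega}+f(Q_{\Omega})$, turns the linear part into $-\Omega Q_{\Omega}+f(Q_{\Omega})-g_{xx}-\gamma\delta g$, while the second--order Taylor expansion of $f$ at $Q_{\Omega}$, which is exactly the definition \eqref{eq:hot} of $\Rcal$, gives $f(Q_{\Omega}+g)=f(Q_{\Omega})+f^{\prime}(Q_{\Omega})g+\tfrac12 f^{\dprime}(Q_{\Omega})(g,g)+\Rcal(Q_{\Omega},g)$. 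The two occurrences of $f(Q_{\Omega})$ cancel, and collecting $-g_{xx}-\gamma\delta g-f^{\prime}(Q_{\Omega})g=\Lcal g-\Omega g$ from the definition \eqref{op:L} of $\Lcal$ rewrites the linear--plus--first--order part as $-\Omega(Q_{\Omega}+g)+\Lcal g$. Adding the remaining $-\theta_t(Q_{\Omega}+g)$ term and the $-\i\lambda_t\big(\varphi_{\Omega}+\tfrac{\d\rho}{\d\lambda}(\lambda)Q_{\Omega}\big)$ contribution, and unfolding $Q_{\Omega}+g=Q_{\Omega}+\lambda\varphi_{\Omega}+\rho(\lambda)Q_{\Omega}+\eps$ together with $g=\lambda\varphi_{\Omega}+\rho(\lambda)Q_{\Omega}+\eps$, one reads off precisely \eqref{eq:epst}.

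This is a bookkeeping computation rather than one with a genuine obstacle; the two points calling for care are: (i) that the constant--in--$x$ gauge factor $\e^{\i\theta(t)}$ may be pushed through $\partial_x^2$, $\gamma\delta$ and $f$, so that $u\e^{\i\theta}$ solves the same equation up to the extra drift $-\theta_t(u\e^{\i\theta})$; and (ii) that the manipulations involving the singular term $\gamma\delta$ --- in particular applying $\partial_x^2$ to $Q_{\Omega}$, whose first derivative jumps at the origin --- are legitimate, which holds because $Q_{\Omega}$ and $\varphi_{\Omega}$ belong to the operator domain of $-\Delta_{\gamma}$ by \eqref{eqQ} and \eqref{phijump}, so that \eqref{eq:epst} is read as an identity in $H^{-1}(\R)$ (paired against $H^1$ test functions), consistently with the $\Ccal^3$ regularity of $\Scal_{\Omega}$. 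One also uses that $\rho$ is $\Ccal^1$ (the principal part of the $\Ccal^2$ function from Lemma \ref{lem:ift}) and that $\lambda,\theta$ are the $\Ccal^1$ modulation parameters of Lemma \ref{lem:ift2}, so that the chain rule $\partial_t\rho(\lambda(t))=\tfrac{\d\rho}{\d\lambda}(\lambda)\,\lambda_t$ and all the time derivatives above are meaningful along the flow.
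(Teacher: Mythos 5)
Your proposal is correct and follows essentially the same route as the paper: gauge out the phase by setting $v=u\e^{\i\theta}$, subtract the profile equation \eqref{eqQ}, Taylor-expand $f$ at $Q_{\Omega}$ to second order (which is exactly how $\Rcal$ is defined), identify $\Lcal g$ from \eqref{op:L}, and substitute $g=\lambda\varphi_{\Omega}+\rho(\lambda)Q_{\Omega}+\eps$. The only difference is that you make explicit the (correct) remarks about commuting the gauge factor with $\gamma\delta$ and reading the identity in $H^{-1}$, which the paper leaves implicit.
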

\begin{proof}
First, let
$
  v\sts{t,x}
  =
  u\sts{t,x}\e^{ \i\theta\sts{t} }
$,
then we have
\begin{equation}
\label{eq3001}
  u_{t}=\sts{ v_{t}-\i\theta_{t} v }\e^{ -\i\theta }
  \quad\text{and}\quad
  u_{xx}=v_{xx}\e^{ -\i\theta },
\end{equation}
which together with \eqref{dnls} implies that
\begin{equation}
\label{eq3002}
  \i v_{t} + \theta_{t}v+v_{xx}+\gamma\delta v+f\sts{v}=0.
\end{equation}

Next, let
\begin{equation}
\label{eq3003}
  v\sts{t,x}=Q_{\Omega}\sts{x}+g\sts{t,x}.
\end{equation}
By \eqref{eq3002}, we have
\begin{align*}
  0
  =
  &
  \i g_{t}
  +
  \theta_{t}\sts{ Q_{\Omega} + g }
  +
  \sts{ Q_{\Omega} + g }_{xx}
  +
  \gamma\delta \sts{ Q_{\Omega}+g }
  +
  f\sts{ Q_{\Omega}+g }.
\end{align*}
Since $Q_{\Omega}$ is the solution of  \eqref{eqQ}, we have
\begin{align}
\label{eq3004}
  0
  =
  &
  \i g_{t}
  +
  \sts{\theta_{t}+\Omega}\sts{ Q_{\Omega} + g }
  +
  { g }_{xx} - \Omega { g }
  +
  \gamma\delta { g }
  +
  f\sts{ Q_{\Omega}+g }-f\sts{ Q_{\Omega} }.
\end{align}
By \eqref{op:L} and \eqref{eq:hot}, we have
\begin{align}
\label{eq3005}
  0
  =
  &
  \i g_{t}
  +
  \sts{\theta_{t}+\Omega}\sts{ Q_{\Omega} + g }
  +
  { g }_{xx} - \Omega { g }
  +
  \gamma\delta { g }
  +   f^{\prime}\sts{ Q_{\Omega}} g
  + \frac{1}{2}f^{\dprime}\sts{ Q_{\Omega}}\sts{ g, g}
  + \Rcal\sts{ Q_{\Omega},g }.
\end{align}

Finally, by taking $g\sts{t,x}=\lambda\sts{t} \phi_{\Omega}\sts{x}+\rho\sts{\lambda\sts{t}}Q_{\Omega}\sts{x}$ in \eqref{eq3005}, we can obtain \eqref{eq:epst}, this ends
the proof.
\end{proof}

By the orthogonal structure of the remainder term $\eps(t,x)$, we can obtain the dynamical control of the parameters $\lambda(t)$ and $\theta(t)$ as following.
\begin{lemm}
\label{lem:paradyn}
  Suppose $T>0$.
  There exist  $ 0<\tilde{\eta}_5\ll 1$ and $ 0<\tilde{\lambda}_5\ll 1$, such that
  if for all $t\in\left[ 0,T \right)$,  $\eps(t)$, $\theta(t)$, $\lambda(t)$ satisfying \eqref{eq:epst}, and 
  \begin{equation}
  \label{epstorth}
    \braket{ \eps\sts{t},\i Q_{\Omega} }=0 \quad\text{and}\quad \braket{\eps\sts{t},\varphi_{\Omega}}=0,
  \end{equation}
  and
  \begin{equation}
  \label{eq3sma}
    \norm{\eps\sts{t}}_{H^1}\leq \tilde{\eta}_5, \quad\text{and}\quad \abs{\lambda\sts{t}}\leq \tilde{\lambda}_5,
  \end{equation}
  then   for all $t\in\left[0,T\right)$, we have
  \begin{equation}
  \label{paradyncontr}
    \abs{\lambda_t}+\abs{\theta_t+\Omega}\leq C\sts{ \abs{\lambda}+\norm{\eps\sts{t}}_{H^1} },
  \end{equation}
 where $C$ is a constant which only depends on $Q_{\Omega}$.
\end{lemm}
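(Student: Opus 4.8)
The plan is to differentiate the two orthogonality conditions in \eqref{epstorth} in time and use the $\eps$-equation \eqref{eq:epst} to produce a $2\times 2$ linear system for $\lambda_t$ and $\theta_t+\Omega$, whose coefficient matrix is a perturbation of the non-degenerate Jacobian already computed in \Cref{lem:ift2}. First I would take $\frac{\d}{\d t}\braket{\eps(t),\i Q_\Omega}=0$ and $\frac{\d}{\d t}\braket{\eps(t),\varphi_\Omega}=0$; since $Q_\Omega$ and $\varphi_\Omega$ are time-independent this gives $\braket{\eps_t,\i Q_\Omega}=0$ and $\braket{\eps_t,\varphi_\Omega}=0$. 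Substituting $\i\eps_t$ from \eqref{eq:epst} (after multiplying by $-\i$, or equivalently pairing directly) turns each identity into an equation that is linear in $\lambda_t$ and $\theta_t+\Omega$ with the remaining terms being either small (controlled by $\norm{\eps}_{H^1}$ and $\abs\lambda$) or expressible through $\Lcal$ acting on the various pieces.

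The key computational inputs are: the coefficient of $\lambda_t$ comes from pairing $-\i\lambda_t(\varphi_\Omega+\rho'(\lambda)Q_\Omega)$ against $\i Q_\Omega$ and $\varphi_\Omega$, giving entries $\braket{\varphi_\Omega+\rho'(\lambda)Q_\Omega,Q_\Omega}$ and $0+\mathrm{O}(\abs\lambda)$ respectively (using $\braket{\varphi_\Omega,Q_\Omega}=0$ from \eqref{deg:Q} and $\rho'(0)=0$ from \eqref{func:rho}); the coefficient of $\theta_t+\Omega$ comes from pairing $-(Q_\Omega+\lambda\varphi_\Omega+\rho(\lambda)Q_\Omega+\eps)$ against the same test functions, giving $\braket{Q_\Omega,\varphi_\Omega}+\mathrm{O}(\abs\lambda+\norm\eps_{H^1})=\mathrm{O}(\abs\lambda+\norm\eps_{H^1})$ in the first equation and $\braket{\varphi_\Omega,\varphi_\Omega}+\mathrm{O}(\abs\lambda+\norm\eps_{H^1})$ in the second. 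One must also handle the terms $\braket{\Lcal(\lambda\varphi_\Omega+\rho(\lambda)Q_\Omega+\eps),\cdot}$: using self-adjointness of $\Lcal=\Scal_\Omega''(Q_\Omega)$, $\Lcal(\i Q_\Omega)=0$ (so that test function contributes nothing), and $\Lcal\varphi_\Omega=-Q_\Omega$ from \eqref{eq:phi2Q}, these reduce to $\braket{\lambda\varphi_\Omega+\rho(\lambda)Q_\Omega+\eps,-Q_\Omega}$ for the second equation, which is $\mathrm{O}(\abs\lambda+\norm\eps_{H^1})$ by \eqref{deg:Q}, \eqref{func:rho} and Cauchy-Schwarz. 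The genuinely nonlinear terms $f''(Q_\Omega)(\cdots,\cdots)$ and $\Rcal(Q_\Omega,\cdots)$ are quadratic and higher in the small quantity $\lambda\varphi_\Omega+\rho(\lambda)Q_\Omega+\eps$, hence bounded by $\mathrm{O}((\abs\lambda+\norm\eps_{H^1})^2)$ using the pointwise estimates \eqref{fexp1}, \eqref{fexp2}, \eqref{f2d} and the $H^1\hookrightarrow L^\infty$ embedding.

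Collecting terms, the system reads
\begin{equation*}
\begin{pmatrix}
\braket{Q_\Omega,Q_\Omega}+\mathrm{O}(\abs\lambda) & \mathrm{O}(\abs\lambda+\norm\eps_{H^1})\\
\mathrm{O}(\abs\lambda) & -\braket{\varphi_\Omega,\varphi_\Omega}+\mathrm{O}(\abs\lambda+\norm\eps_{H^1})
\end{pmatrix}
\begin{pmatrix}\lambda_t\\ \theta_t+\Omega\end{pmatrix}
=
\begin{pmatrix}\mathrm{O}(\abs\lambda+\norm\eps_{H^1})\\ \mathrm{O}(\abs\lambda+\norm\eps_{H^1})\end{pmatrix}.
\end{equation*}
For $\tilde\eta_5$ and $\tilde\lambda_5$ small enough the coefficient matrix is a small perturbation of $\mathrm{diag}(\norm{Q_\Omega}_2^2,-\norm{\varphi_\Omega}_2^2)$, whose determinant $-\norm{Q_\Omega}_2^2\norm{\varphi_\Omega}_2^2\neq0$ stays bounded away from zero; inverting it and using that the inverse has bounded entries yields \eqref{paradyncontr} with $C$ depending only on $Q_\Omega$ (and $\varphi_\Omega$, which is itself determined by $Q_\Omega$).

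I expect the main obstacle to be bookkeeping: one must carefully verify that every term arising from \eqref{eq:epst} after pairing is accounted for in the right size class, in particular that there is no hidden term of size $\mathrm{O}(1)$ (independent of $\lambda,\eps$) on the right-hand side — this is exactly what forces the use of the degeneracy relation $\braket{\varphi_\Omega,Q_\Omega}=0$ to kill what would otherwise be an $\mathrm{O}(1)$ contribution of $\theta_t+\Omega$ in the first equation and of the linear part in the second. A secondary subtlety is treating the $\delta$-potential term inside $\Lcal$ rigorously when pairing; this is handled by the weak (form) definition of $-\Delta_\gamma$ together with the boundary identities \eqref{phijump} and the analogous jump condition for $Q_\Omega$, so that all integrations by parts against $\gamma\delta$ are legitimate and produce only boundary values already encoded in $\Lcal$. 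Modulo these points, the argument is the standard modulation-parameter estimate and I would only sketch the pairings, citing \cite{MTX2018} for the fully detailed analogue.
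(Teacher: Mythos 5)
Your strategy coincides with the paper's: differentiate the two orthogonality relations \eqref{epstorth}, substitute \eqref{eq:epst}, and obtain a $2\times 2$ linear system for $(\lambda_t,\theta_t+\Omega)$ with non-degenerate coefficient matrix and right-hand side of size $O(\abs{\lambda}+\norm{\eps}_{H^1})$; the structural inputs you list ($\braket{\varphi_{\Omega},Q_{\Omega}}=0$, $\rho'(0)=0$, $\Lcal(\i Q_{\Omega})=0$, $\Lcal\varphi_{\Omega}=-Q_{\Omega}$, quadratic smallness of the $f''$ and $\Rcal$ terms) are exactly the ones the paper uses. However, the explicit matrix you display does not follow from the pairings you describe: its dominant entries sit on the wrong diagonal. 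Differentiating $\braket{\eps,\i Q_{\Omega}}=0$ is equivalent to pairing the right-hand side of \eqref{eq:epst} with $Q_{\Omega}$; there the term $-\i\lambda_t\sts{\varphi_{\Omega}+\rho'(\lambda)Q_{\Omega}}$ is purely imaginary against a real test function and contributes \emph{zero} to the $\lambda_t$-coefficient, while $\theta_t+\Omega$ carries the $O(1)$ coefficient $-\norm{Q_{\Omega}}_2^2+O(\lambda^2+\norm{\eps}_{H^1})$. Differentiating $\braket{\eps,\varphi_{\Omega}}=0$ is equivalent to pairing with $\i\varphi_{\Omega}$; there $\lambda_t$ carries the $O(1)$ coefficient $-\norm{\varphi_{\Omega}}_2^2$, and the $\theta_t+\Omega$ coefficient is only $-\braket{\eps,\i\varphi_{\Omega}}=O(\norm{\eps}_{H^1})$ since every other piece is real. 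So the system is anti-diagonal-dominant, and your claimed $(1,1)$ entry $\norm{Q_{\Omega}}_2^2$ and $(2,2)$ entry $-\norm{\varphi_{\Omega}}_2^2$ cannot arise. Because the determinant is $\pm\norm{Q_{\Omega}}_2^2\norm{\varphi_{\Omega}}_2^2+o(1)\neq 0$ in either configuration, this transposition does not affect invertibility or the bound \eqref{paradyncontr}, but the entries must be recomputed before the argument is complete.

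A smaller inaccuracy: in the second equation the linear term reduces by self-adjointness to $\braket{\lambda\varphi_{\Omega}+\rho(\lambda)Q_{\Omega}+\eps,\ \Lcal(\i\varphi_{\Omega})}$, and $\Lcal(\i\varphi_{\Omega})\neq -\i Q_{\Omega}$: by \eqref{linS} the operator acts differently on real and imaginary parts, and the identity $\Lcal\varphi_{\Omega}=-Q_{\Omega}$ holds only for the real direction. Only the imaginary part of $\eps$ survives the pairing, so the term is indeed $O(\norm{\eps}_{H^1})$ — your size estimate is correct, but not for the reason stated.
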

\begin{proof}
  Multiplying \eqref{eq:epst} with $ Q_{\Omega} $ and $ \i\varphi_{\Omega} $ respectively, we have
\begin{align}
  \notag
  &
  \lambda_{t}\braket{\i\sts{ \varphi_{\Omega}+\frac{\d\rho}{\d\lambda}\sts{\lambda} Q_{\Omega} },Q_{\Omega}}
  +\sts{ \theta_{t} + \Omega }\braket{ \sts{ Q_{\Omega}+\lambda\varphi_{\Omega}+\rho\sts{\lambda}Q+\eps },Q_{\Omega} }
  \\
  \notag
  =
  &
  \braket{ \Lcal\sts{ \lambda\varphi_{\Omega}+\rho\sts{\lambda}Q_{\Omega}+\eps }, Q_{\Omega} }
  \\
  \label{eq3006}
  &
  -\braket{
    f\sts{Q_{\Omega}+\lambda\varphi_{\Omega}+\rho\sts{\lambda}Q_{\Omega}+\eps}
    -
    f\sts{Q_{\Omega}}
    -
    f^{\prime}\sts{Q_{\Omega}}\sts{\lambda\varphi_{\Omega}+\rho\sts{\lambda}Q_{\Omega}+\eps}, Q_{\Omega} }
\end{align}
and
\begin{align}
  \notag
  &
  \lambda_{t}\braket{\i\sts{ \varphi_{\Omega}+\frac{\d\rho}{\d\lambda}\sts{\lambda} Q_{\Omega} },\i\varphi_{\Omega}}
  +\sts{ \theta_{t} + \Omega }\braket{ \sts{ Q_{\Omega}+\lambda\varphi_{\Omega}+\rho\sts{\lambda}Q+\eps },\i\varphi_{\Omega} }
  \\
  \notag
  =
  &
  \braket{ \Lcal\sts{ \lambda\varphi_{\Omega}+\rho\sts{\lambda}Q_{\Omega}+\eps }, \i\varphi_{\Omega} }
  \\
  \label{eq3007}
  &
  -\braket{
    f\sts{Q_{\Omega}+\lambda\varphi_{\Omega}+\rho\sts{\lambda}Q_{\Omega}+\eps}
    -
    f\sts{Q_{\Omega}}
    -
    f^{\prime}\sts{Q_{\Omega}}\sts{\lambda\varphi_{\Omega}+\rho\sts{\lambda}Q_{\Omega}+\eps}, \i\varphi_{\Omega} }.
\end{align}

Let 
\begin{equation*}
\mathcal{F}(Q_{\Omega}, \lambda, \eps) =  f\sts{Q_{\Omega}+\lambda\varphi_{\Omega}+\rho\sts{\lambda}Q_{\Omega}+\eps}
-
f\sts{Q_{\Omega}}
-
f^{\prime}\sts{Q_{\Omega}}\sts{\lambda\varphi_{\Omega}+\rho\sts{\lambda}Q_{\Omega}+\eps}, 
\end{equation*}
then by \eqref{fexp2} and \eqref{func:rho}, $\mathcal{F}(Q_{\Omega}, \lambda, \eps) $ is a polynomial of at least one degree with respect to $\lambda$ and $\eps$.  By  \eqref{eq3sma}, we have 
\begin{equation}
\label{eq3008}
\abs{\braket{\mathcal{F}(Q_{\Omega}, \lambda, \eps) 
		, Q_{\Omega} }} \leq C(Q_{\Omega}) + \abs{\braket{\mathcal{F}(Q_{\Omega}, \lambda, \eps) 
		 , \i\varphi_{\Omega} }}  \leq C(Q_{\Omega}) \left(\abs{\lambda}+\norm{\eps\sts{t}}_{H^1}\right).
\end{equation}
In addition,  by \eqref{func:rho} and  \eqref{eq3sma},  we also have 
\begin{equation}
\label{eq3009}
\abs{\braket{ \Lcal\sts{ \lambda\varphi_{\Omega}+\rho\sts{\lambda}Q_{\Omega}+\eps }, Q_{\Omega} }}
+ \abs{ \braket{ \Lcal\sts{ \lambda\varphi_{\Omega}+\rho\sts{\lambda}Q_{\Omega}+\eps }, \i\varphi_{\Omega} }}  \leq C(Q_{\Omega}) \left(\abs{\lambda}+\norm{\eps\sts{t}}_{H^1}\right).
\end{equation}

Combining \eqref{eq3006}, \eqref{eq3007}, \eqref{eq3008} and \eqref{eq3009}, we can obtain the result.
\end{proof}

\section{Proof of  Theorem \ref{mainthm}}\label{sect:main}
\begin{proof}
We argue by contradiction and divide the proof of main theorem  into several steps.

\begin{enumerate}[label=\emph{{Step \arabic*.}},ref=\emph{{Step \arabic*}}]
\item \textit{Preparation of the initial data.} Firstly, we can choose  $0<\lambda_0< \tilde{\lambda}_0\ll 1$ sufficiently small such that  
$
\Mcal\sts{u_0}=\Mcal\sts{Q_{\Omega}},
$
where
  \begin{equation}
\label{initdata}
u_0\sts{x}=Q_{\Omega}\sts{x}+\lambda_0\varphi_{\Omega}\sts{x}+\widetilde{\rho}\sts{\lambda_0}Q_{\Omega}\sts{x},
\end{equation}
and 
$ \widetilde{\rho}(\lambda) $ is defined by \eqref{func:trho}. It is easy to check that 
 \begin{equation}
\label{lam0sml}
\norm{u_0 - Q_{\Omega}}_{H^1} = \norm{\lambda_0\varphi_{\Omega}+\widetilde{\rho}\sts{\lambda_0}Q_{\Omega}}_{H^1}< C\lambda_0.
\end{equation}
Assume that the solitary wave $Q_{\Omega}\e^{\i\Omega t}$ is orbitally stable in the energy space. By \Cref{def:stab}, for $\eta_0>0$ to be determined later,  there exists sufficiently small $\lambda_0$ such that  the solution $u(t)$  of \eqref{dnls} with initial data $u_0 \in \Ucal\sts{Q_{\Omega}, C\lambda_0}$ is  global, and $ u\sts{t}\in\Ucal\sts{Q_{\Omega}, \eta_0}$  for all $t>0$.

%
%
 \item \textit{Geometric decomposition of the solution $u\sts{t}$.} Let $\rho(\lambda)$ be defined by \eqref{func:rho}. By \Cref{lem:ift2} and the standard regularity argument
  in \cite{MM2001GAFA}, there exist two $\Ccal^1$ functions $\lambda$ and $\theta$ with respect to $t$ such that the remainder term 
   \begin{equation}
  \label{eps:dcp}
  \eps\sts{t,x}=u\sts{t,x}\e^{\i\theta\sts{t}}
  -
  \Big({
  	Q_{\Omega}\sts{x}
  	+
  	{\lambda\sts{t}}\varphi_{\Omega}\sts{x}
  	+
  	\rho({ {\lambda}\sts{t}}){Q_{\Omega}}\sts{x}
  }\Big)
  \end{equation}
  satisfies the equation
   \begin{align}
  \notag
  \i\eps_{t}
  =&
  -\i\lambda_{t}\sts{ \varphi_{\Omega}+\frac{\d\rho}{\d\lambda}\sts{\lambda} Q_{\Omega} }
  -\sts{ \theta_{t} + \Omega }\sts{ Q_{\Omega}+\lambda\varphi_{\Omega}+\rho\sts{\lambda}Q+\eps }
  \\
  \notag
  &
  + \Lcal\sts{ \lambda\varphi_{\Omega}+\rho\sts{\lambda}Q_{\Omega}+\eps }
  -\frac{1}{2}f^{\dprime}\sts{Q_{\Omega}}\sts{ \lambda\varphi_{\Omega}
  	+\rho\sts{\lambda}Q_{\Omega}+\eps , \lambda\varphi_{\Omega}+\rho\sts{\lambda}Q_{\Omega}+\eps }
  \\
  \label{eq:epst2}
  &
  -\Rcal\sts{ Q_{\Omega}, \lambda\varphi_{\Omega}+\rho\sts{\lambda}Q_{\Omega}+\eps},
  \end{align}
  where
  $f^{\dprime}\sts{Q_{\Omega}}$ is defined by \eqref{f2d},
  $\Lcal$ and $\Rcal$ are defined by \eqref{op:L} and \eqref{eq:hot} in Lemma \ref{lem:epst}, and  for all $t>0$, we have
\begin{align}
  \label{nvt}
      \braket{{\eps\sts{t}},{\i{Q}_{\Omega}}}=0,
      \quad
      \braket{{\eps\sts{t}},{\varphi_{\Omega}}}=0,
\end{align}
and
\begin{align}
  \label{se}
  \norm{\eps\sts{t}}_{H^1}+\abs{\lambda\sts{t}} + \abs{\theta(t)}  \leq C\eta_0.
\end{align}
By choosing $\lambda_0$ sufficiently small such that we have 
\begin{equation*}
0<\max\{1, C \} \; \eta_0<\min
\Set{ 
	\tilde{\eta}_0,
	\tilde{\eta}_1 , \tilde{\lambda}_1,
	\tilde{\eta}_3 , \tilde{\lambda}_3, 
	\tilde{\eta}_4, \tilde{\lambda}_4, 
	\tilde{\eta}_5, \tilde{\lambda}_5 
}.
  \end{equation*}
Moreover, by \Cref{lem:paradyn},  we have
\begin{equation}
  \label{paradyn}
    \abs{\lambda_t}+\abs{\theta_t+\Omega}\leq C\sts{ \abs{\lambda}+\norm{\eps\sts{t}}_{H^1} }.
  \end{equation}

Next, by the conservation law of mass and  \eqref{eps:dcp},  we have
\begin{equation}
\label{masscsv}
\Mcal\Big({ 
  Q_{\Omega} +   {\lambda\sts{t}}\varphi_{\Omega} 
  + \rho({ {\lambda}\sts{t}}){Q_{\Omega}}  +  \eps\sts{t}
}\Big)
=
\Mcal\sts{ Q_{\Omega} },
\end{equation} 
which together with \Cref{lem:epsQ} implies that
\begin{equation}
  \label{epstQ}
    \abs{ \braket{ \eps\sts{t},Q_{\Omega} } }
    \leq
    C\Big({ \norm{\eps\sts{t}}_{H^1}^2+\abs{\lambda\sts{t}}\norm{\eps}_{H^1} + \lambda\sts{t}^4 }\Big),
    \quad\text{for all }t>0.
  \end{equation}

\item \textit{Estimates of the the remainder term $\eps\sts{t}$ and the parameter $\lambda\sts{t}$.} 
Combining the aboved estimates,  we have the following  estimates of remainder term $\eps\sts{t}$ and the parameter $\lambda\sts{t}$ as a consequence of \Cref{lem:expand2}.
\begin{prop}
\label{prop:epsclam}
  Let $u_0$ be defined by \eqref{initdata} and $\eps\sts{t}$ be defined by \eqref{eps:dcp}. Then for all $t>0$,
  we have 
  \begin{equation}
  \label{lampres}
    \lambda\sts{t}\geq \frac{1}{2}\lambda_0,
  \end{equation}
  and
  \begin{equation}
  \label{eq:epsclam}
    \norm{\eps\sts{t}}_{H^1}^2\leq - \frac{2}{\kappa}\mathbf{d}^{\tprime}\sts{\Omega}\lambda\sts{t}^3,
  \end{equation}
  where $\kappa$ is the constant defined in \Cref{coro:coer}. 
\end{prop}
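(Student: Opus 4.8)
The plan is to convert the conservation of the action functional $\Scal_{\Omega}$ into an ``energy-trapping'' inequality and then to close by a short continuity argument. Since $\Scal_{\Omega}=\Ecal+\Omega\Mcal$ is conserved along the flow by \eqref{mass}--\eqref{energy} and is invariant under phase rotations, the decomposition \eqref{eps:dcp} gives, for every $t\geq 0$,
\begin{equation*}
\Scal_{\Omega}\big(Q_{\Omega}+\lambda(t)\varphi_{\Omega}+\rho(\lambda(t))Q_{\Omega}+\eps(t)\big)=\Scal_{\Omega}(u(t))=\Scal_{\Omega}(u_{0}).
\end{equation*}

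First I would expand both sides to cubic order. For the initial datum, $u_{0}-Q_{\Omega}=\lambda_{0}\varphi_{\Omega}+\widetilde{\rho}(\lambda_{0})Q_{\Omega}$ with $\widetilde{\rho}(\lambda_{0})-\rho(\lambda_{0})=\bo{\lambda_{0}^{4}}$ by \eqref{func:trho}, so \Cref{lem:expand1}, applied to $Q_{\Omega}+\lambda_{0}\varphi_{\Omega}+\rho(\lambda_{0})Q_{\Omega}$ (the correction $\big(\widetilde{\rho}(\lambda_{0})-\rho(\lambda_{0})\big)Q_{\Omega}$ only affecting $\Scal_{\Omega}$ at order $\so{\lambda_{0}^{3}}$), gives $\Scal_{\Omega}(u_{0})=\Scal_{\Omega}(Q_{\Omega})+\frac{1}{6}\mathbf{d}^{\tprime}(\Omega)\lambda_{0}^{3}+\so{\lambda_{0}^{3}}$; moreover $\lambda(0)=\lambda_{0}$, the decomposition of $u_{0}$ being forced by uniqueness since $\big(\widetilde{\rho}(\lambda_{0})-\rho(\lambda_{0})\big)Q_{\Omega}$ is orthogonal to both $\i Q_{\Omega}$ and $\varphi_{\Omega}$ by \eqref{deg:Q}. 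For the solution, \eqref{epstQ} is precisely the hypothesis \eqref{epsQ2} for $(\lambda(t),\eps(t))$, so \Cref{lem:expand2} applies and gives
\begin{equation*}
\Scal_{\Omega}(u(t))=\Scal_{\Omega}(Q_{\Omega})+\Scal_{\Omega}^{\dprime}(Q_{\Omega})(\eps(t),\eps(t))+\tfrac{1}{6}\mathbf{d}^{\tprime}(\Omega)\lambda(t)^{3}+\so{\abs{\lambda(t)}^{3}+\norm{\eps(t)}_{H^1}^{2}}.
\end{equation*}
Subtracting the two identities isolates $\Scal_{\Omega}^{\dprime}(Q_{\Omega})(\eps(t),\eps(t))$ in terms of $\lambda(t)^{3}-\lambda_{0}^{3}$ and small remainders.

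Next I would bring in coercivity: by the orthogonality \eqref{nvt} and the mass constraint \eqref{masscsv}, \Cref{lem:restcoer} yields $\Scal_{\Omega}^{\dprime}(Q_{\Omega})(\eps(t),\eps(t))\geq\frac{\kappa}{2}\norm{\eps(t)}_{H^1}^{2}+\so{\lambda(t)^{4}}$. Feeding this into the subtracted identity, absorbing the $\so{\norm{\eps(t)}_{H^1}^{2}}$ terms into the coercive term, and using the uniform smallness $\abs{\lambda(t)}+\norm{\eps(t)}_{H^1}\leq C\eta_{0}$ from \eqref{se} to bound each surviving $\so{\cdot}$ by $\delta_{0}$ times the relevant quantity (with $\delta_{0}=\delta_{0}(\eta_{0},\lambda_{0})\to 0$ as $\eta_{0},\lambda_{0}\to 0$), I reach --- writing $c_{0}:=-\frac{1}{6}\mathbf{d}^{\tprime}(\Omega)>0$, which is positive by \Cref{lem:d3rdd} --- the trapping inequality
\begin{equation*}
\tfrac{\kappa}{4}\norm{\eps(t)}_{H^1}^{2}\;\leq\;(c_{0}+\delta_{0})\,\lambda(t)^{3}-(c_{0}-\delta_{0})\,\lambda_{0}^{3}\qquad\text{whenever }\lambda(t)>0 .
\end{equation*}

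Finally I would read off the two bounds. As $\lambda(\cdot)$ is continuous with $\lambda(0)=\lambda_{0}$, a standard continuity argument gives \eqref{lampres}: if $\lambda$ ever reached $\tfrac{1}{2}\lambda_{0}$, let $t_{*}>0$ be the first such time; then $\lambda>0$ on $[0,t_{*}]$, so the trapping inequality at $t_{*}$ forces $0\leq\tfrac{\kappa}{4}\norm{\eps(t_{*})}_{H^1}^{2}\leq(c_{0}+\delta_{0})\tfrac{1}{8}\lambda_{0}^{3}-(c_{0}-\delta_{0})\lambda_{0}^{3}<0$ once $\delta_{0}$ is small --- a contradiction; hence $\lambda(t)>\tfrac12\lambda_{0}$ for all $t\geq 0$. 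The trapping inequality then holds at every $t$, and dropping the term $-(c_{0}-\delta_{0})\lambda_{0}^{3}$ (which is $<0$) gives $\norm{\eps(t)}_{H^1}^{2}\leq\frac{4(c_{0}+\delta_{0})}{\kappa}\lambda(t)^{3}\leq\frac{12c_{0}}{\kappa}\lambda(t)^{3}=-\frac{2}{\kappa}\mathbf{d}^{\tprime}(\Omega)\,\lambda(t)^{3}$ for $\delta_{0}<c_{0}$, which is \eqref{eq:epsclam}. The main obstacle is the error bookkeeping behind the trapping inequality: one must use the uniform $\eta_{0}$-smallness to keep the coercive term $\tfrac{\kappa}{2}\norm{\eps(t)}_{H^1}^{2}$ from being eaten by $\so{\norm{\eps(t)}_{H^1}^{2}}$ and the coefficient of $\lambda_{0}^{3}$ strictly negative --- and to notice that no a priori upper bound on $\lambda(t)$ is needed, the self-improving lower bound at the hypothetical first exit time sufficing.
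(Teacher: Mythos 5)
Your proof is correct and follows essentially the same route as the paper: conservation of $\Scal_{\Omega}$, the cubic expansions of Lemmas \ref{lem:expand1}--\ref{lem:expand2} at $Q_{\Omega}$, and the coercivity of Lemma \ref{lem:restcoer}, combined into a trapping inequality from which both bounds follow. The only (welcome) differences are that you make explicit the first-exit-time continuity argument guaranteeing $\lambda(t)>0$ (needed to absorb the $\so{\abs{\lambda(t)}^3}$ error with the right sign) and the identification $\lambda(0)=\lambda_0$ via uniqueness of the decomposition, both of which the paper leaves implicit.
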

\begin{proof} The proof is similar to that of \cite[Proposition 4.1]{MTX2018}.  We give the details for the reader's convenience. 
	
	Firstly, as in the proof of Lemma \ref{lem:expand1}, we have 
	\begin{align*}
&\;	\Scal_{\Omega}\sts{
		u_0
	}
	-
	\Scal_{\Omega}\sts{
		Q_{\Omega}
	}\\
	=
	&\;
	\Scal_{\Omega}\sts{
		Q_{\Omega}+\lambda_0\phi_{\Omega}+\widetilde{\rho}\sts{\lambda_0}Q_{\Omega}
	}
	-
	\Scal_{\Omega}\sts{
		Q_{\Omega}
	}
	\\
	=
	&\;
	\frac{1}{2}
	\Scal^{\dprime}_{\Omega}\sts{Q_{\Omega}}
	\sts{
		\lambda_0\phi_{\Omega}+\widetilde{\rho}\sts{\lambda_0}Q_{\Omega},~
		\lambda_0\phi+\widetilde{\rho}\sts{\lambda_0}Q_{\Omega}
	}
	\\
	&
	+\frac{1}{6}
	\Scal^{\tprime}_{\Omega}\sts{Q_{\Omega}}
	\sts{
		\lambda_0\phi_{\Omega}+\widetilde{\rho}\sts{\lambda_0}Q_{\Omega},~
		\lambda_0\phi_{\Omega}+\widetilde{\rho}\sts{\lambda_0}Q_{\Omega},~
		\lambda_0\phi_{\Omega}+\widetilde{\rho}\sts{\lambda_0}Q_{\Omega}
	}
	\\
	&
	+
	\so{
		\norm{ \lambda_0\phi_{\Omega}+\widetilde{\rho}\sts{\lambda_0}Q_{\Omega}
		}_{H^1}^3
	}. \numberthis\label{ST1}
	\end{align*}
	where we used the fact that $\Scal'_{\Omega}\sts{Q_{\Omega}}=0$. By \eqref{func:trho} and  the fact that $\action{Q_{\Omega}}{\phi}=0$, we have
	\begin{align*}
	&\;
	\Scal^{\dprime}_{\Omega}\sts{Q_{\Omega}}
	\sts{
		\lambda_0\phi_{\Omega}+\widetilde{\rho}\sts{\lambda_0}Q_{\Omega},~
		\lambda_0\phi_{\Omega}+\widetilde{\rho}\sts{\lambda_0}Q_{\Omega}
	}
	\\
	=
	& \;
	\left(\lambda_0\right)^2
	\Scal^{\dprime}_{\Omega}\sts{Q_{\Omega}}
	\sts{\phi_{\Omega},~\phi_{\Omega}}
	+
	2\lambda_0\widetilde{\rho}\sts{\lambda_0}
	\Scal^{\dprime}_{\Omega}\sts{Q_{\Omega}}
	\sts{\phi_{\Omega},~ Q_{\Omega}}
	+
	\widetilde{\rho}\sts{\lambda_0}^2\Scal^{\dprime}_{\Omega}\sts{Q_{\Omega}}
	\sts{Q_{\Omega},~Q_{\Omega}}
	\\
	=
	&
	-
	\left(\lambda_0\right)^2
	\action{Q_{\Omega}}{\phi_{\Omega}}
	-
	2\lambda_0\widetilde{\rho}\sts{\lambda_0}
	\action{Q_{\Omega}}{Q_{\Omega}}
	+
	\widetilde{\rho}\sts{\lambda_0}^2\Scal^{\dprime}_{\Omega}\sts{Q_{\Omega}}
	\sts{Q_{\Omega},~Q_{\Omega}}
	\\
	=
	&
	-
	2\lambda_0\widetilde{\rho}\sts{\lambda_0}
	\action{Q_{\Omega}}{Q_{\Omega}}
	+
	\widetilde{\rho}\sts{\lambda_0}^2\Scal^{\dprime}_{\Omega}\sts{Q_{\Omega}}
	\sts{Q_{\Omega},~Q_{\Omega}}
	\\
	=
	&
	\left(\lambda_0\right)^3
	\action{\phi_{\Omega}}{\phi_{\Omega}}
	+
	\so{\abs{\lambda_0}^3},
	\end{align*}
which together with \eqref{ST1} and \eqref{eq:d3rdd} implies that
	\begin{align*}
	\Scal_{\Omega}\sts{
		u_0
	}
	-
	\Scal_{\Omega}\sts{
		Q_{\Omega}
	}
	=
	&  \left(
	\frac{1}{2}
	\action{\phi_{\Omega}}{\phi_{\Omega}}
	+\frac{1}{6}
	\Scal^{\tprime}_{\Omega}\sts{Q_{\Omega}}
	\sts{\phi_{\Omega},~\phi_{\Omega},~\phi_{\Omega}} \right) \cdot \left(\lambda_0\right)^3
	+
	\so{\abs{\lambda_0}^3}
	\\
	=
	& \;
	\frac{1}{6}~
	\mathbf{d}^{\tprime}(\Omega)\cdot \left( \lambda_0\right)^3
	+
	\so{\abs{\lambda_0}^3}.
	\numberthis\label{eST1}
	\end{align*}

	Secondly, by Lemma \ref{lem:expand2} and Lemma \ref{lem:restcoer}, we know that for any $t\geq 0$,  there exists some $\kappa>0$ such that
	\begin{align*}
	\Scal_{\Omega}\sts{
		u\sts{t}
	}
	-
	\Scal_{\Omega}\sts{
		Q_{\Omega}
	}
	= & \;
	\Scal_{\Omega}\sts{
		Q_{\Omega}
		+
		\lambda\sts{t}\phi_{\Omega}
		+
		\rho\sts{\lambda\sts{t}}Q_{\Omega}+\eps\sts{t}
	}
	-
	\Scal_{\Omega}\sts{
		Q_{\Omega}
	}
	\\
	= &\;
	\frac{1}{6}
	\mathbf{d}^{\tprime}(\Omega)\cdot \left(\lambda\sts{t}\right)^3
	+
	\Scal''_{\Omega}\sts{Q_{\Omega}}  \sts{\eps\sts{t},\eps\sts{t}}
	+
	\so{\abs{\lambda\sts{t}}^3}
	+
	\so{ \norm{\eps\sts{t}}_{H^1}^2 }
	\\
	\geq  &\;
	\frac{1}{6}
	\mathbf{d}^{\tprime}(\Omega)\cdot \left(\lambda\sts{t}\right)^3
	+
	\frac{\kappa}{4}\norm{\eps\sts{t}}_{H^1}^2
	+
	\so{\abs{\lambda\sts{t}}^3}
	+
	\so{ \norm{\eps\sts{t}}_{H^1}^2 }.
	\numberthis\label{eST2}
	\end{align*}
	
	Finally, by the mass and energy conservation laws, we have
	\begin{equation*}
	\Scal_{\Omega}\sts{u\sts{t}}=\Scal_{\Omega}\sts{u_0},
	\text{~~for any ~~} t\geq 0.
	\end{equation*}
	Therefore, by \eqref{eST1}, \eqref{eST2} and the fact that
	$\mathbf{d}^{\tprime}(\Omega)<0$, we have
	\begin{align*}
	\frac{1}{24}~
	\mathbf{d}^{\tprime}(\Omega)\cdot \left(\lambda_0\right)^3
	\geq
	& \;
	\frac{1}{6}~
	\mathbf{d}^{\tprime}(\Omega)\cdot \left(\lambda_0\right)^3
	+
	\so{\abs{\lambda_0}^3}
	\\
	\geq
	& \;
	\frac{1}{6}~
	\mathbf{d}^{\tprime}(\Omega)\cdot \left(\lambda\sts{t}\right)^3
	+
	\frac{\kappa}{4}~\norm{\eps\sts{t}}_{H^1}^2
	+
	\so{\abs{\lambda\sts{t}}^3}
	+
	\so{ \norm{\eps\sts{t}}_{H^1}^2 }
	\\
	\geq
	&
	\frac{1}{3}~
	\mathbf{d}^{\tprime}(\Omega)\cdot \left(\lambda\sts{t}\right)^3
	+
	\frac{\kappa}{6}~\norm{\eps\sts{t}}_{H^1}^2,
	\end{align*}
	which implies that
	\begin{equation*}
	\lambda\sts{t}\geq \frac{1}{2}\lambda_0,
	\text{~~ and ~~}
	\norm{\eps \sts{t}}_{H^1}^2
	\leq
	-
	\frac{2}{\kappa}~
	\mathbf{d}^{\tprime}(\Omega)\cdot \left(\lambda\sts{t}\right)^3.
	\end{equation*}
	This concludes the proof of Proposition \ref{prop:epsclam}.
	\end{proof}

By \eqref{epstQ} and  \eqref{eq:epsclam}, we have
\begin{equation}
\label{epstQclam}
  \abs{ \braket{ \eps\sts{t},Q_{\Omega} } }
  \leq 
  C \lambda(t)^{\frac{5}{2}},
\end{equation}
where $C$ is a constant independent of $\eps\sts{t}$ and $\lambda\sts{t}$.

\item\textit{Monotonicity formula.} Let us define 
\begin{equation}
\label{Phi}
  \Phi\sts{t,x}
  =
  \varphi_{\Omega}\sts{x}
  -\lambda\sts{t}
  \frac{\braket{\varphi_{\Omega},\varphi_{\Omega}}
  }{
    \braket{ Q_{\Omega},Q_{\Omega}}
  }
  Q_{\Omega}\sts{x},
\end{equation}
and the Virial type quantity as following
\begin{equation}
\label{virial}
  \Iscr\sts{t}
  =
  \braket{\i\eps\sts{t}, \Phi\sts{t}}.
\end{equation}
By \eqref{eq:epst2} and \eqref{nvt}, we have the following estimates 
\begin{align}
\label{It}  \frac{\d}{\d t}\Iscr\sts{t}
  = & 
  \braket{ {\i\partial_t \eps},{\Phi\sts{t}} }
  -
  \lambda_t
  \frac{\braket{\varphi_{\Omega},\varphi_{\Omega}}
  }{
    \braket{ Q_{\Omega},Q_{\Omega}}
  }
  \braket{{\i\eps },{Q_{\Omega}}}
\\
\notag
 = &  \braket{ {\i\partial_t \eps},{\Phi\sts{t}}}
  \\
  =
  &
  \label{lamt}
  -\lambda_{t}
    \braket{ \i\sts{ \varphi_{\Omega}+\frac{\d\rho}{\d\lambda}\sts{\lambda(t)} Q_{\Omega} }, {\Phi\sts{t}} }
  \\
  &
  \label{thett}
  -
  \sts{ \theta_{t} + \Omega }
  \braket{ \sts{ Q_{\Omega}+\lambda(t)\varphi_{\Omega}+\rho\sts{\lambda(t)}Q+\eps(t) }, {\Phi\sts{t}} }
  \\
  &
  \label{lint}
  + \braket{ \Lcal\sts{ \lambda(t)\varphi_{\Omega}+\rho\sts{\lambda(t)}Q_{\Omega}+\eps(t) }, {\Phi\sts{t}} }
  \\
  &
  \label{quadt}
  -\frac{1}{2}\braket{ f^{\dprime}\sts{Q_{\Omega}}
      \sts{ \lambda(t)\varphi_{\Omega}
          +\rho\sts{\lambda(t)}Q_{\Omega}+\eps(t) }\sts{ \lambda(t)\varphi_{\Omega}+\rho\sts{\lambda(t)}Q_{\Omega}+\eps (t)
      }
      ,
      {\Phi\sts{t}}
      }
  \\
  &
  \label{hott}
  -\braket{ \Rcal\sts{ Q_{\Omega}, \lambda(t)\varphi_{\Omega}+\rho\sts{\lambda(t)}Q_{\Omega}+\eps(t)}  , {\Phi\sts{t}}}.
\end{align}

\textit{Estimate of \eqref{lamt}.} By \eqref{Q} and \eqref{phi},  we have the vanishing result
\begin{align}
\label{lamtend}
  \eqref{lamt}
  =
  &
  0.
\end{align}

\textit{Estimate of \eqref{thett}.}
By \eqref{deg:Q} and \eqref{nvt}, we have
\begin{align}
\notag
  &
  \braket{ \sts{ Q_{\Omega}+\lambda(t)\varphi_{\Omega}+\rho\sts{\lambda(t)}Q+\eps(t) }, {\Phi\sts{t}} }
  \\
\notag
  =
  &
  \braket{ \sts{ Q_{\Omega}+\lambda(t)\varphi_{\Omega}+\rho\sts{\lambda(t)}Q+\eps(t) }, \varphi_{\Omega} }
  \\
\notag
  &
  -
  \lambda(t)
  \frac{\braket{\varphi_{\Omega},\varphi_{\Omega}} }{ \braket{ Q_{\Omega},Q_{\Omega}} }
  \braket{ \sts{ Q_{\Omega}+\lambda(t)\varphi_{\Omega}+\rho\sts{\lambda(t)}Q+\eps(t) }, Q_{\Omega} }
  \\
\notag 
  =
  &
  -\lambda(t)\rho\sts{\lambda(t)}\braket{\varphi_{\Omega},\varphi_{\Omega}}
  -\lambda(t)\frac{\braket{\varphi_{\Omega},\varphi_{\Omega}} }{ \braket{ Q_{\Omega},Q_{\Omega}} }
  \braket{ \eps(t), Q_{\Omega}}.
\end{align}
By  \eqref{func:rho} and \eqref{epstQclam},  we obtain
\begin{equation}
\label{thett2}
   \braket{ \sts{ Q_{\Omega}+\lambda\sts{t}\varphi_{\Omega}+\rho\sts{\lambda\sts{t}}Q+\eps(t) }, {\Phi\sts{t}} } 
  =\bo{ \lambda\sts{t}^{\frac{7}{2}} }.
\end{equation} 

Now, inserting \eqref{paradyn} and \eqref{thett2} into \eqref{thett}, we can obtain
\begin{equation}
\label{thettend}
  \eqref{thett}=\so{\lambda\sts{t}^2}.
\end{equation}

\textit{ Estimate of \eqref{lint}. }
Since $\Lcal$ is a self-adjoint operator, we deduced by \Cref{phi2Q}, \eqref{func:rho} and \eqref{deg:Q} that
\begin{align}
\notag
  &
  \braket{ \Lcal\sts{ \lambda\varphi_{\Omega}+\rho\sts{\lambda}Q_{\Omega}+\eps }, {\Phi\sts{t}} }
  \\
\notag
  =
  &
  \braket{ \Lcal\sts{ \lambda\varphi_{\Omega}+\rho\sts{\lambda}Q_{\Omega}+\eps }, \varphi_{\Omega} }
  -
  \lambda
  \frac{\braket{\varphi_{\Omega},\varphi_{\Omega}}}{\braket{ Q_{\Omega},Q_{\Omega}}}
  \braket{ \Lcal\sts{ \lambda\varphi_{\Omega}+\rho\sts{\lambda}Q_{\Omega}+\eps }, Q_{\Omega} }
    \\
\label{lint1}
    =
    &
    \frac{3}{2}\lambda^2  {\braket{\varphi_{\Omega},\varphi_{\Omega}}}
    -\braket{\eps,Q_{\Omega}}
    -\lambda \rho\sts{\lambda} \frac{\braket{\varphi_{\Omega},\varphi_{\Omega}}}{\braket{ Q_{\Omega},Q_{\Omega}}}
      \braket{ \Lcal Q_{\Omega},Q_{\Omega} }
      -\lambda \rho\sts{\lambda} \frac{\braket{\varphi_{\Omega},\varphi_{\Omega}}}{\braket{ Q_{\Omega},Q_{\Omega}}}
      \braket{ \Lcal Q_{\Omega},\eps }.
\end{align}
It follows from \eqref{func:rho}, \eqref{eq:epsclam} and \eqref{epstQclam} that
\begin{equation}
\label{lintend}
  \eqref{lint}=\frac{3}{2}\lambda(t)^2  {\braket{\varphi_{\Omega},\varphi_{\Omega}}}+\so{ \lambda(t)^2 }
\end{equation}

\textit{ Estimate of \eqref{quadt}. }
Note that
\begin{align}
\notag
  &
  \braket{ f^{\dprime}\sts{Q_{\Omega}}
      \sts{ \lambda(t)\varphi_{\Omega}
          +\rho\sts{\lambda(t)}Q_{\Omega}+\eps(t) } \sts{ \lambda(t)\varphi_{\Omega}+\rho\sts{\lambda(t)}Q_{\Omega}+\eps (t)
      }
      ,
      {\Phi\sts{t}}
      }
  \\
  \notag
  =
  &
  \braket{ f^{\dprime}\sts{Q_{\Omega}}
  \sts{ \lambda(t)\varphi_{\Omega}
      +\rho\sts{\lambda(t)}Q_{\Omega}+\eps(t) } \sts{ \lambda(t)\varphi_{\Omega}+\rho\sts{\lambda(t)}Q_{\Omega}+\eps (t)
  }
  ,
  { \varphi_{\Omega} }
  }
  \\
  \notag
  &
  -
  \lambda(t)
  \frac{\braket{\varphi_{\Omega},\varphi_{\Omega}}}{\braket{ Q_{\Omega},Q_{\Omega}}}
  \braket{ f^{\dprime}\sts{Q_{\Omega}}
  \sts{ \lambda(t)\varphi_{\Omega}
      +\rho\sts{\lambda(t)}Q_{\Omega}+\eps(t) } \sts{ \lambda(t)\varphi_{\Omega}+\rho\sts{\lambda(t)}Q_{\Omega}+\eps (t)
  }
  ,
  { Q_{\Omega} }
  }
  \\
  \notag
  =
  &\;
  \lambda(t)^2\braket{ f^{\dprime}\sts{Q_{\Omega}}  \varphi_{\Omega}  \varphi_{\Omega}   ,{ \varphi_{\Omega} } }
  \\
  &
  \label{quadt12}
  +
  2
  \braket{ f^{\dprime}\sts{Q_{\Omega}}\sts{ \lambda(t)\varphi_{\Omega} }\sts{ \rho\sts{\lambda(t)}Q_{\Omega}+\eps(t) } ,{ \varphi_{\Omega} } }
  \\
  \label{quadt13}
  &
  +
  \braket{ f^{\dprime}\sts{Q_{\Omega}}
    \sts{ \rho\sts{\lambda(t)}Q_{\Omega}+\eps (t)} \sts{ \rho\sts{\lambda(t)}Q_{\Omega}+\eps(t) } ,{ \varphi_{\Omega} } }
  \\
  \label{quadt14}
  &
  -
  \lambda(t)
  \frac{\braket{\varphi_{\Omega},\varphi_{\Omega}}}{\braket{ Q_{\Omega},Q_{\Omega}}}
  \braket{ f^{\dprime}\sts{Q_{\Omega}}
  \sts{ \lambda(t)\varphi_{\Omega}
      +\rho\sts{\lambda(t)}Q_{\Omega}+\eps(t) } \sts{ \lambda(t)\varphi_{\Omega}+\rho\sts{\lambda(t)}Q_{\Omega}+\eps(t) 
  }
  ,
  { Q_{\Omega} }
  }
\end{align}
By \eqref{func:rho} and \eqref{eq:epsclam}, we have
\begin{align*}
  \eqref{quadt12} &= \bo{ \lambda(t)\rho\sts{\lambda(t)}+\lambda(t)\norm{\eps(t)}_{H^1} } =\so{\lambda(t)^2},
  \\
  \eqref{quadt13} &= \bo{ \lambda(t)^4+\lambda(t)^2\norm{\eps(t)}_{H^1} + \norm{\eps(t)}_{H^1}^2 }=\so{\lambda(t)^2},
  \\
  \eqref{quadt14} &= \bo{ \lambda(t)^2+\lambda(t)^4+\norm{\eps(t)}_{H^1}^2 }=\so{\lambda(t)^2},
\end{align*}
which implies that 
\begin{equation}
\label{quadend}
  \eqref{quadt}
  =-
  \frac{1}{2}\lambda(t)^2\braket{ f^{\dprime}\sts{Q_{\Omega}}{ \varphi_{\Omega}, \varphi_{\Omega} } ,{ \varphi_{\Omega} } }
  +\so{\lambda(t)^2}.
\end{equation}

\textit{ Estimate of \eqref{hott}. }
By \eqref{func:rho}, \eqref{eq:hot} and \eqref{eq:epsclam}, we have
\begin{equation}
\label{hottend}
  \eqref{hott}=\bo{ \lambda(t)^3+\norm{\eps(t)}_{H^1}^2 }=\so{\lambda(t)^2}.
\end{equation}
Therefore, by summing up \eqref{lamtend}, \eqref{thettend}, \eqref{lintend}, \eqref{quadend} and \eqref{hottend}, we obtian from 
\begin{equation*}
  \eqref{It}
  =
  \frac{\lambda(t)^2}{2}
  \sts{ -
    \braket{ f^{\dprime}\sts{Q_{\Omega}}{ \varphi_{\Omega}  \varphi_{\Omega} } ,{ \varphi_{\Omega} } } 
    +
    3{\braket{\varphi_{\Omega},\varphi_{\Omega}}}
  }
  +
  \so{\lambda(t)^2}.
\end{equation*}
It follows from \eqref{F3rdd} and \Cref{lem:d3rdd} that
\begin{equation}
  \label{eIt}
   \eqref{It}=\frac{1}{2}\mathbf{d}^{\tprime}\sts{\Omega}\lambda(t)^2+\so{\lambda(t)^2}.
\end{equation}

\item \textit{Conclusion.}
On the one hand, by \eqref{virial} and \eqref{Phi}, we obtain that
$\norm{\eps\sts{t}}_{H^1}$ and
$\norm{\Phi\sts{t}}_{H^1}$
are uniformly bounded with respect to $t$. Therefore, by the
Cauchy-Schwarz inequality, we have
\begin{equation}\label{tIbd}
  \abs{\Iscr\sts{t}}
  \text{~~uniformly bounded with respect to~~}
  t.
\end{equation}

On the other hand, since $\mathbf{d}^{\tprime}\sts{\Omega}<0$,
by \eqref{lampres} and \eqref{eIt}, we have
\begin{align*}
\frac{\d}{\d t}\Iscr\sts{t}
=
\frac{1}{2}~\mathbf{d}^{\tprime}\sts{\Omega}\lambda^2\sts{t}
+
\so{ \lambda\sts{t}^2 }
 \leq
 \frac{1}{4}~\mathbf{d}^{\tprime}\sts{\Omega}\lambda\sts{t}^2
\leq
\frac{1}{16}~\mathbf{d}^{\tprime}\sts{\Omega} \left(\lambda_0\right)^2,
\end{align*}
by integrating the above inequality
over $[0, ~t)$, we can obtain that
\begin{align*}
\Iscr\sts{t}
=
&
\Iscr\sts{0}
+
\int_{0}^{t}\Iscr^{\prime}\sts{s}\d s
\leq
\Iscr\sts{0}
+
\frac{1}{16}~\mathbf{d}^{\tprime}\sts{\Omega} \left(\lambda_0\right)^2 t,
\end{align*}
which means that
\begin{equation*}
  \lim_{t\to +\infty}\Iscr\sts{t}=-\infty,
\end{equation*}
which is in contradiction with \eqref{tIbd}. 

Above all, we complete the proof of \Cref{mainthm}.
\end{enumerate}
\end{proof}

\appendix
\section{Proof of \Cref{lem:d3rdd}} 
\label{app:d3rdd}
\begin{proof}[Proof of \Cref{lem:d3rdd}]
(1) and (3) and the fact that $ \mathbf{d}^{\dprime}\sts{\Omega}=0 $ in (2) were proved in \cite{FOO2008AIHP}. Now we show (2).  For the convenience of the readers, we will give an alternative proof of the estimate $ \mathbf{d}^{\dprime}\sts{\Omega}=0 $.
Since $\omega>\frac{\gamma^2}{4}$, we denote 
\begin{gather*}
  \omega\sts{\lambda} = \frac{\lambda^2\gamma^2}{4},\quad\text{for~~}\lambda>1,
  \\
  m\sts{\omega}=\Mcal\sts{Q_{\omega}},
\end{gather*}
where $\Mcal\sts{Q_{\omega}}$ is defin by \eqref{mass},
and define
\begin{equation}
\label{m2g}
  g\sts{\lambda} = m\sts{\omega\sts{\lambda}}.
\end{equation}
It follows from \eqref{d1omega} that
\begin{equation}
\label{ap01}
  d^{\dprime}\sts{\omega}=\frac{\d m}{\d\omega }\sts{\omega}.
\end{equation}
A direct computation implies that
\begin{align}
\label{ap02}
  \frac{\d }{\d \lambda}g\sts{\lambda}
  =
  \frac{\d m}{\d \omega}\sts{\omega}\frac{\d \omega}{\d \lambda}\sts{\lambda}
  =\frac{\lambda\gamma^2}{2}\frac{\d m}{\d \omega}\sts{\omega},
\end{align}
\begin{align}
\notag
\label{ap03}
  \frac{\d^2 }{\d \lambda^2}g\sts{\lambda}
  =
  &
  \frac{\d m}{\d \omega}\sts{\omega}\frac{\d^2 \omega}{\d \lambda^2}\sts{\lambda}
  +
  \frac{\d^2 m}{\d \omega^2}\sts{\omega}\sts{ \frac{\d \omega}{\d \lambda}\sts{\lambda} }^2
  \\
  =
  &
  \frac{\gamma^2}{2}\frac{\d m}{\d \omega}\sts{\omega}
  +
  \frac{\lambda^2\gamma^4}{4} \frac{\d^2 m}{\d \omega^2}\sts{\omega}.
\end{align}
Combining \eqref{ap01} with \eqref{ap02}, since $\lambda>1$, we obtain that
\begin{center}
  $\mathbf{d}^{\dprime}\sts{\omega}=0$  if and only if $ \frac{\d }{\d \lambda}g\sts{\lambda}=0 $.
\end{center}
By \eqref{Q} and \eqref{m2g}, we have 
\begin{align*}
  g\sts{\lambda}=\int_{0}^{+\infty}\sts{ Q_{\omega\sts{\lambda}}\sts{x} }^2\d x
  =
  C\sts{p,\gamma}h\sts{\lambda}q\sts{\lambda},
\end{align*}
where $C\sts{p,\gamma}=\sts{ \frac{p+1}{8} }^{ \frac{2}{p-1} }\frac{4}{p-1}\gamma^{ \frac{4}{p-1}-1 }>0$,
\begin{align*}
  h\sts{\lambda}= \lambda^{\frac{4}{p-1}-1}, 
  \quad\text{and}\quad 
  q\sts{\lambda} = \int_{ \arctanh\sts{\frac{1}{\lambda}} }^{+\infty}\sech^{\frac{4}{p-1}}\sts{y}\d y.
\end{align*}
A direct calculation implies that
\begin{align*}
  \notag
  \frac{\d }{\d\lambda}g\sts{\lambda}
  =
  &
  C\sts{p,\gamma}
  \sts{ 
    \frac{\d h}{\d\lambda}\sts{\lambda}q\sts{\lambda} 
    +  
    \frac{\d q}{\d\lambda}\sts{\lambda}h\sts{\lambda}
  }
  \\
  =
  &
  C\sts{p,\gamma}
  \sts{
    \frac{5-p}{p-1} \lambda^{\frac{4}{p-1}-2}q\sts{\lambda}
    +
    \lambda^{\frac{4}{p-1}-1}\frac{ \sts{\lambda^2-1}^{ \frac{2}{p-1}-1 } }{ \lambda^{\frac{4}{p-1}} }
  }.
\end{align*}
Therefore, $\mathbf{d}^{\dprime}\sts{\Omega}=0$ if and only if $\left(\frac{\d }{\d\lambda}g\right)\sts{ \frac{ 2\sqrt{\Omega} }{\gamma} }=0$, 
i.e. $\frac{ 2\sqrt{\Omega} }{\gamma}$ satisfies
\begin{align}
\notag
  q\sts{ \frac{ 2\sqrt{\Omega} }{\gamma} }
  = & 
  \frac{p-1}{p-5}
  \sts{ \frac{ 2\sqrt{\Omega} }{\gamma} }^{ \frac{p-5}{p-1} }
  \sts{\sts{ \frac{ 2\sqrt{\Omega} }{\gamma} }^2-1}^{ \frac{ 3-p }{p-1} }
  \\ 
    \label{ap07}
  = & 
  \frac{p-1}{p-5}
  \sts{ \frac{ 2\sqrt{\Omega} }{\gamma} }^{ -1 + 2\frac{p-3}{p-1} }
  \sts{\sts{ \frac{ 2\sqrt{\Omega} }{\gamma} }^2-1}^{- \frac{ p-3 }{p-1} }
\end{align}
which coincide with the fact that \eqref{Omegaeq}, i. e.  we have $\mathbf{d}^{\dprime}\sts{\Omega}=0$. 

Next, we show that $\mathbf{d}^{\tprime}\sts{\Omega}<0$. By \eqref{ap03}, we have
\begin{center}
  $\mathbf{d}^{\tprime}\sts{\Omega}<0$ if and only if $\left(\frac{\d^2 }{\d\lambda^2}g\right)\sts{ \frac{ 2\sqrt{\Omega} }{\gamma}}<0$.
\end{center}
Since
\begin{align*}
  \frac{\d^2 }{\d\lambda^2}g\sts{\lambda}
  =
  C\sts{p,\gamma}
  \sts{ \frac{\d^2 h}{\d\lambda^2}\sts{\lambda}q\sts{\lambda} 
  +  
  \frac{\d^2 q}{\d\lambda^2}\sts{\lambda}h\sts{\lambda} 
  +
  2
  \frac{\d h}{\d\lambda }\sts{\lambda}\frac{\d q}{\d\lambda }\sts{\lambda}
  },
\end{align*}
it suffices to show that 
\begin{align*}
  \left(\frac{\d^2 h}{\d\lambda^2} q 
  +  
  \frac{\d^2 q}{\d\lambda^2} h  
  +
  2
  \frac{\d h}{\d\lambda}  \frac{\d q}{\d\lambda } \right) \sts{ \frac{ 2\sqrt{\Omega} }{\gamma} }
  <0.
\end{align*}
By the direct computations, we have
\begin{align}
  \label{ap11}
  &
  \frac{\d^2 h}{\d\lambda^2}\sts{\lambda}q\sts{\lambda} 
  +  
  \frac{\d^2 q}{\d\lambda^2}\sts{\lambda}h\sts{\lambda} 
  +
  2
  \frac{\d h}{\d\lambda }\sts{\lambda}\frac{\d q}{\d\lambda }\sts{\lambda}
  \\
  \notag
  =
  &
  \sts{ \frac{4}{p-1}-1 }\sts{ \frac{4}{p-1}-2 }\lambda^{ \frac{4}{p-1}-3 } q\sts{\lambda} 
  \\ \notag
  & +
  \lambda^{ \frac{4}{p-1}-1 }\frac{3-p}{p-1}\sts{\lambda^2-1 }^{ \frac{2}{p-1}-2 }2\lambda^{ 1-\frac{4}{p-1} }
  \\
  \notag
  &
  -
  \lambda^{ \frac{4}{p-1}-1 } \frac{4}{p-1} \sts{\lambda^2-1 }^{ \frac{2}{p-1}-1 }\lambda^{ -1-\frac{4}{p-1} }
  \\ \notag
  & +
  2\frac{ \frac{4}{p-1}-1 }{ \lambda^{ \frac{4}{p-1}-2 }\sts{ \lambda^2-1 }^{ \frac{2}{p-1}-1 }\lambda^{ -\frac{4}{p-1}} }.
\end{align}
By inserting \eqref{ap07} into \eqref{ap11}, we obtain
\begin{align*}
  \left(\frac{\d^2 h}{\d\lambda^2} q 
+  
\frac{\d^2 q}{\d\lambda^2} h  
+
2
\frac{\d h}{\d\lambda}  \frac{\d q}{\d\lambda } \right) \sts{ \frac{ 2\sqrt{\Omega} }{\gamma} }
  =
  \frac{6-2p}{p-1}\sts{ \frac{4\Omega}{\gamma^2}-1 }^{ \frac{2}{p-1}-2 }<0,
\end{align*}
since $p>5$. Therefore, we obtain $\mathbf{d}^{\tprime}\sts{\Omega}<0$.

Finally, by \eqref{d2omega} and \Cref{phi2Q}, we have
\begin{align*}
  \mathbf{d}^{\tprime}\sts{\Omega}
  =&
  \Scal^{\tprime}_{\Omega}\sts{Q_{\Omega}}
  \sts{ \varphi_{\Omega} , \varphi_{\Omega}, \varphi_{\Omega} }
  +
  2\Scal^{\dprime}_{\Omega}\sts{Q_{\Omega}}
  \sts{ \varphi_{\Omega} , \partial_{\Omega}\varphi_{\Omega} }  
  +
  \braket{ \varphi_{\Omega}, \varphi_{\Omega} }
  +
  2\braket{ \varphi_{\Omega}, \varphi_{\Omega} }
  +
  2\braket{ Q_{\Omega}, \partial_{\Omega}\varphi_{\Omega} }
  \\
  =&
  \Scal^{\tprime}_{\Omega}\sts{Q_{\Omega}}
  \sts{ \varphi_{\Omega} , \varphi_{\Omega}, \varphi_{\Omega} }
  +
  3\braket{ \varphi_{\Omega}, \varphi_{\Omega} }.
\end{align*}
This ends the proof of \Cref{lem:d3rdd}.
\end{proof}

\bibliographystyle{plain}

\end{document}